\documentclass[12pt,twoside]{article}
\usepackage{etex}
\usepackage[dvipsnames]{xcolor}
\usepackage{color}
\usepackage{booktabs}
\usepackage{amsthm}
\usepackage{amsfonts,amssymb,amsmath,amsxtra,url,float}
\usepackage{indentfirst}
\allowdisplaybreaks[4]
\usepackage[colorlinks, linkcolor=magenta, anchorcolor=Periwinkle, citecolor=violet, urlcolor=blue]{hyperref}
\usepackage{enumitem}
\usepackage{geometry} 
\usepackage{rotating}
\usepackage{lscape}
\usepackage{multirow}
\usepackage{graphicx}
\usepackage{subfigure}
\usepackage{adjustbox}
\usepackage{mathrsfs}
\usepackage{fancyhdr}
\usepackage[all]{xy}
\usepackage{dsfont}
\usepackage{marginnote}
\usepackage{changepage}

\usepackage{enumitem}
\setenumerate[1]{itemsep=0pt,partopsep=0pt,parsep=\parskip,topsep=3pt}
\setitemize[1]{itemsep=0pt,partopsep=0pt,parsep=\parskip,topsep=3pt}
\setdescription{itemsep=0pt,partopsep=0pt,parsep=\parskip,topsep=3pt}
\setlist[itemize]{leftmargin=35pt}
\setlist[enumerate]{leftmargin=35pt}

\usepackage{imakeidx}
\makeindex[columns=2, intoc]
\usepackage{tikz}
\usepackage{pgfplots}
\def\Pic{Figure~}

\theoremstyle{plain}
\newtheorem{theorem}{Theorem}[section]
\newtheorem{lemma}[theorem]{Lemma}
\newtheorem{notation}[theorem]{Notation}
\newtheorem{corollary}[theorem]{Corollary}
\newtheorem{proposition}[theorem]{Proposition}
\newtheorem{definition}[theorem]{Definition}
\newtheorem{remark}[theorem]{Remark}
\newtheorem{example}[theorem]{Example}

\numberwithin{equation}{section}
\numberwithin{figure}{section}

\DeclareMathOperator{\image}{\mathrm{Im}}
\DeclareMathOperator{\kernel}{\mathrm{Ker}}
\DeclareMathOperator{\Hom}{\mathrm{Hom}}

\DeclareMathOperator{\End}{End}

\def\top{\operatorname{top}}
\def\rad{\operatorname{rad}}

\def\ind{\mathsf{ind}}

\newcommand{\C}{\mathscr{C}}          
\newcommand{\calR}{\mathcal{R}}       
\newcommand{\rmI}{\mathrm{I}}         
\def\bC{\bar{C}}
\def\tC{\widetilde{C}}
\newcommand{\Q}{\mathcal{Q}}
\newcommand{\I}{\mathcal{I}}
\def\s{\mathfrak{s}}
\def\t{\mathfrak{t}}
\def\e{\epsilon}
\def\nota{\star}

\def\op{\mathrm{op}}
\def\CMA{\mathrm{CMA}}
\def\itLamb{\mathit{\Lambda}}

\newcommand{\modcat}{\mathsf{mod}}

\newcommand{\Dcat}{\mathsf{D}}
\newcommand{\comp}[1]{\pmb{#1}^{\bullet}}
\newcommand{\coH}{\mathrm{H}}
\newcommand{\hw}{\mathrm{hw}}
\newcommand{\pdim}{\mathrm{proj.dim}}
\newcommand{\idim}{\mathrm{inj.dim}}
\newcommand{\gldim}{\mathrm{gl.dim}}
\newcommand{\glhw}{\mathrm{gl.hw}}
\newcommand{\findim}{\mathrm{fin.dim}}

\newcommand{\KGdim}{\mathrm{KGdim}~}
\newcommand{\Gproj}{\mathsf{G}\text{-}\mathsf{proj}}
\newcommand{\proj}{\mathsf{proj}}

\newcommand{\str}{\mathbf{s}}

\newcommand{\M}{\mathbb{M}}
\newcommand{\X}{\mathbb{X}}
\newcommand{\Str}{\mathsf{Str}}
\newcommand{\Band}{\mathsf{Ban}}
\newcommand{\hStr}{\mathsf{h.Str}}
\newcommand{\hBand}{\mathsf{h.Ban}}
\newcommand{\emb}{\mathfrak{e}}
\newcommand{\res}{\mathfrak{res}}
\def\sfs{\mathsf{s}}
\def\sfb{\mathsf{b}}
\def\sfw{\mathsf{w}}
\def\sfh{\mathsf{h}}

\newcommand{\defines}[1]{{\it\color{red}#1\index{#1}}}
\newcommand{\sm}[1]{\left(\begin{smallmatrix}#1\end{smallmatrix}\right)_{A}}

\newcommand{\To}[1]{\mathop{\longrightarrow}\limits^{#1}}

\newcommand{\kk}{\mathbf{k}}
\newcommand{\shadow}[1]{{\color{#1}$\blacksquare\!\!\!\blacksquare$}}
\def\NN{\mathbb{N}}
\def\ZZ{\mathbb{Z}}

\def\Ab{\mathsf{Ab}}
\def\>={\geqslant}
\def\=<{\leqslant}

\def\orcid{\begin{tikzpicture}[baseline=-1mm] \filldraw[Green!35] (0,0) circle (5pt); \filldraw[white] (0,0) node{\tiny\textbf{iD}}; \end{tikzpicture}}
\newcommand{\ORCID}[1]{ORCID: \href{https://orcid.org/#1}{#1}}
\newcommand{\ORCIDNOTATION}[1]{\href{https://orcid.org/#1}{\orcid}}

\def\headertitle{Recollements of Cohen--Macaulay Auslander algebras for gentle algebras}
\def\fstpage{1}
\def\page{$\begin{matrix} {\color{white}0} \\ \thepage \end{matrix}$}

\def\EnglishTitle{Recollements of  Cohen--Macaulay Auslander algebras for gentle algebras}
\def\EnglishFundings{Yu-Zhe Liu is supported by
the National Natural Science Foundation of China (Grant Nos. 12401042 and 12561008),
the Guizhou Provincial Basic Research Program (Natural Science) (Grant Nos. ZD[2025]085 and ZK[2024]YiBan066),
the Guizhou Provincial Major Project of Basic Research Program (Grant No. VZD[2026]001);
Xin Ma is supported by
the Central Plains Science and Technology Innovation Youth Top-notch Talent (2024ZYBJRC002),
and the Natural Science Foundation of Henan (262300421832);
and Guiqi Shi is supported by the Scientific Research Foundation of Guizhou University (Grant No. [2023]16).
}

\def\FirstAuthorORICD{0009-0007-7780-0160}

\def\FourthAuthorORICD{0009-0005-1110-386X}

\def\PaperAuthorsENname{
	Jiacheng Xu$^{~\ref{Author1}, \ORCIDNOTATION{\FirstAuthorORICD}\ref{orcid1}}$,
	Yu-Zhe Liu$^{~\ref{Author1}, \ORCIDNOTATION{\FourthAuthorORICD}\ref{orcid1}~\ref{CorrespondingAuthor}}$,
	Xin Ma$^{~\ref{Author2}}$,
	Guiqi Shi$^{~\ref{Author1}}$
}
\def\FirstEnOrgani{School of Mathematics and Statistics, Guizhou University, Guiyang 550025, Guizhou, China}
\def\SecondEnOrgani{College of Science, Henan University of Engineering, Zhengzhou 451191, Henan, China}
\def\FirstEmail{\url{xjcgzu823@163.com} (J.Xu)}
\def\SecondEmail{\url{yzliu3@163.com}/\url{liuyz@gzu.edu.cn} (Y.-Z. Liu)}
\def\ThirdEmail{\url{maxin@haue.edu.cn} (X. Ma)}
\def\FourthEmail{\url{gqshi@gzu.edu.cn} (G. Shi)}

\title{\bf \EnglishTitle$^{\color{red}\dag}$ \footnotetext[2]{\tiny \EnglishFundings}}
\author{\PaperAuthorsENname}
\date{}

\begin{document}
	
\thispagestyle{empty}
\maketitle
	
\begin{enumerate}[label=\textbf{\color{red}$\ddag$}]
  \item \footnotesize \begin{center} Corresponding author \end{center} \label{CorrespondingAuthor}
\end{enumerate}
	
\begin{enumerate}[leftmargin=6.2cm] \footnotesize
  \item[\orcid] \ORCID{\FirstAuthorORICD} \label{orcid1}
  \item[\orcid] \ORCID{\FourthAuthorORICD} \label{orcid1}
\end{enumerate}
	
\vspace{2mm}
	
\begin{enumerate}[label=\textbf{\color{red}\arabic*}] \footnotesize
  \item \begin{center} \FirstEnOrgani \\ E-mail: \FirstEmail; \SecondEmail;  \FourthEmail \end{center} \label{Author1}
  \item \begin{center} \SecondEnOrgani \\ E-mail: \ThirdEmail \end{center} \label{Author2}
\end{enumerate}
	
\vspace{1mm}
	
\begin{adjustwidth}{1cm}{1cm}
\noindent \footnotesize
\textbf{Abstract}:
We construct two recollements of module categories for the Cohen--Macaulay Auslander algebra $A^{\mathrm{CMA}}$ of a gentle algebra $A$.
In this paper, we establish three equivalent characterizations for the quotient algebra $A^{\mathrm{CMA}}/A^{\mathrm{CMA}}(1-\epsilon_{\star}) A^{\mathrm{CMA}}$ of the CM--Auslander algebra of $A$ to be quasi-tilted,
precisely, the following statements are equivalent:
\begin{itemize}
  \item[(1)] $A^{\mathrm{CMA}}/A^{\mathrm{CMA}}(1-\epsilon_{\star}) A^{\mathrm{CMA}}$ is quasi-tilted;
  \item[(2)] $\findim A\leqslant 2$, and for each forbidden $A$-module $M$, $\mathrm{proj.dim}M+\mathrm{inj.dim}M\leqslant 2$;
  \item[(3)] for any homotopy string/band $\mathsf{h}$ none of whose arrows lie on any forbidden cycle,
    the cohomological width of the indecomposable object in $\mathsf{D}^b(A)$ corresponding to $\mathsf{h}$ is $\leqslant 2$.
\end{itemize}
Moreover, we prove that the Krull--Gabriel dimension of $A$ is bounded by 2 if and only if the Krull--Gabriel dimension of $A^{\mathrm{CMA}}$ is bounded by 2 in the case where $A$ is gentle one-cycle.
\vspace{1mm}

\noindent \textbf{2020 Mathematics Subject Classification}:
16G10; 
16G50; 
16E35; 
16E10. 
\label{2020MSC}
\vspace{1mm}
		
\noindent \textbf{Keywords}: Gentle algebras; Cohen--Macaulay Auslander algebras; recollements; quasi-tilted algebras;  Krull--Gabriel dimension. \label{Keywords}
\end{adjustwidth}
	
\linespread{1.3}

\setcounter{tocdepth}{3}
\setcounter{secnumdepth}{3} 
\tableofcontents

	
\section*{Introduction}

Recollement was first introduced by Be\u{\i}linson, Bernstein, Deligne, and Gabber in \cite{BBDG1982},
encompassing both its formulation in Abelian categories and triangulated categories,
as part of their study of derived categories of sheaves on singular spaces.
In this case, a well-known example of a recollement situation of Abelian categories
appeared in the construction of perverse sheaves given by MacPherson and Vilonen \cite{MV1986}.
This concept serves to connect the homological theories \cite[etc]{CX2017,ZZ2020,ZZ2020JAA},
Gorenstein homological theories \cite[etc]{Zhang2013Gorenstein,PSS2014,Qin2020CMAuslander},
homotopy theories \cite[etc]{IKM2011},
weighted projective lines \cite[etc]{Chen2012,LO2017,Ruan2021},
representation theories \cite[etc]{Jor2006,Psa2014,CX2019,PV2019,CJS2022},
and tilting theories \cite[etc]{CX2012,MZ2020,MXZ2022,CX2023} of three algebras.
In particular, Psaroudakis proved that a recollement of three module categories of algebras has the following form
\begin{align} \label{recollement}
\calR(A,e):= \
\xymatrix@C=2cm{
  \modcat(A/AeA) \ar[r]^{\emb}_{\text{embedding}}
& \modcat(A) \ar[r]^{\res_e:=(-)e}_{\text{restriction}}
  \ar@/_1.5pc/[l]_{-\otimes_A A/AeA}
  \ar@/^1.5pc/[l]^{\Hom_{A}(A/AeA,-)}
& \modcat(eAe),
  \ar@/_1.5pc/[l]_{-\otimes_{eAe} eA}
  \ar@/^1.5pc/[l]^{\Hom_{eAe}(Ae,-)}
}
\end{align}
where $e$ is an idempotent in $A$, and $\emb$ denotes the embedding functor
given by the fact that each $A/AeA$-module is an $A$-module, see \cite[Example 2.7]{Psa2014}.

In our paper, all algebras we consider are finite-dimensional algebras over
an algebraically closed field $\kk$, and all modules over $A$ are finitely generated right $A$-modules.
We will point out that a recollement is a very powerful tool
that can effectively link a gentle algebra and its CM--Auslander algebra.
This work is based on G-projective modules over a gentle algebra in \cite{Kal2015},
the description of the CM--Auslander algebra of a gentle algebra in \cite{CL2017,CL2019},
and the recollement for gentle algebras presented in \cite{LLM2025}.
Let $A$ be a gentle algebra, and let $(\Q,\I)$ be its bound quiver.
In \cite{Kal2015}, Kalck showed that a non-projective indecomposable G-projective module
is isomorphic to $\alpha A$, where $\alpha$ is an arrow on a forbidden cycle
(the definition of a forbidden cycle is given in Definition \ref{def:forb.path} in this paper).
Let $A^{\CMA}$ be the CM--Auslander algebra of $A$ and $(\Q^{\CMA},\I^{\CMA})$ its bound quiver.
In \cite{CL2017,CL2019}, Chen and Lu proved that the vertex set of $(\Q^{\CMA},\I^{\CMA})$
corresponds one-to-one to the union of $\Q_0$ and $\ind(\Gproj(A))\backslash\ind(\proj(A))$.
For a gentle algebra $A$, let $\e_{\nota}$ be the sum of all idempotents in $A^{\CMA}$
corresponding to the vertices in $\Q_0$ (see (\ref{formula:idempotent})),
the first result of this paper is the following.

\begin{theorem} Let $A$ be a gentle algebra. Then the following statements hold.
\begin{itemize}
  \item[\rm(1)]{\rm(Lemma \ref{lemm:recollement CMA})}
    $\calR(A^{\CMA},\e_{\nota})$ is a recollement of three finitely generated module categories
    $\modcat(A^{\CMA}/A^{\CMA}\e_{\nota}A^{\CMA})$, $\modcat(A^{\CMA})$, and $\modcat(A)$.

  \item[\rm(2)]{\rm(Proposition \ref{prop:recoll CMA} (1))}
    For any vertex $v$ corresponding to a G-projective module $G\in\ind(\Gproj(A))\backslash\ind(\proj(A))$,
    the $A$-module obtained by applying the functor $\res_{\e_{\nota}}$ to $e_vA^{\CMA}$ is G-projective.

  \item[\rm(3)]{\rm(Proposition \ref{prop:recoll CMA} (2))}
    For any G-projective module $G\in\ind(\Gproj(A))\backslash\ind(\proj(A))$,
    the $C$-module obtained by applying the functor $-\otimes_A \e_{\nota}C$ to $G$ is not G-projective.
\end{itemize}
\end{theorem}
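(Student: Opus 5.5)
The three parts call for different tools, so I will treat them separately. Throughout, write $C=A^{\CMA}$ and use the Chen--Lu description of its bound quiver $(\Q^{\CMA},\I^{\CMA})$.

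\textbf{Part (1).} I would begin by identifying $C$ with $\End_A(\bigoplus_{G\in\ind(\Gproj(A))}G)^{\op}$, computed with respect to a basic additive generator of $\Gproj(A)$. This generator contains $A_A$ as a summand, since $\ind(\proj(A))\subseteq\ind(\Gproj(A))$. By construction $\e_{\nota}$ is the idempotent projecting onto $A_A$, so
\[
\e_{\nota} C\e_{\nota}\cong\End_A(A_A)\cong A.
\]
Psaroudakis' result \cite[Example 2.7]{Psa2014} gives a recollement $\calR(C,e)$ of the form (\ref{recollement}) for every idempotent $e$. Taking $e=\e_{\nota}$ and substituting $\e_{\nota}C\e_{\nota}\cong A$ yields the recollement of $\modcat(C/C\e_{\nota}C)$, $\modcat(C)$ and $\modcat(A)$. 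The only thing to check is that the isomorphism $\e_{\nota}C\e_{\nota}\cong A$ is compatible with the right-module conventions. This is routine once one notes that the vertices of $\Q_0\subseteq\Q^{\CMA}_0$, together with the paths between them, form exactly $(\Q,\I)$.

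\textbf{Part (2).} Let $v$ be the vertex corresponding to $G=\alpha A$, where $\alpha$ is an arrow on a forbidden cycle (Kalck \cite{Kal2015}). The standard formula is
\[
\res_{\e_{\nota}}(e_vC)=e_vC\e_{\nota}\cong\Hom_A(A,G)\cong G
\]
as right $A$-modules, because $e_vC\e_{\nota}$ is the space of morphisms from $A$ to $G$ inside the endomorphism algebra. Since $G$ is G-projective, so is $\res_{\e_{\nota}}(e_vC)$.

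As a cross-check I would verify this against the quiver. In $\Q^{\CMA}$ the vertex $v$ has paths to the vertices of $\Q_0$ that trace $\alpha A$: one arrow to $t(\alpha)$, followed by the paths $\alpha p\neq0$. Hence $e_vC\e_{\nota}$ has a basis indexed by the nonzero paths starting with $\alpha$, which is exactly $\alpha A$.

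\textbf{Part (3).} For $G=\alpha A$, use $G\cong e_vC\e_{\nota}$ and the general isomorphism
\[
e_vC\e_{\nota}\otimes_{\e_{\nota}C\e_{\nota}}\e_{\nota}C\cong e_vC\e_{\nota}C.
\]
This identifies $G\otimes_A\e_{\nota}C$ with $e_vC\e_{\nota}C$, the submodule of the projective $e_vC$ spanned by paths that pass through $\Q_0$. The quotient $e_vC/e_vC\e_{\nota}C$ is spanned by the paths from $v$ avoiding $\Q_0$; it contains at least the simple top $S_v$, so the inclusion $e_vC\e_{\nota}C\subsetneq e_vC$ is proper.

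To show $G\otimes_A\e_{\nota}C$ is not G-projective, I would argue by global dimension. Auslander-type algebras of CM-finite algebras have finite global dimension (Chen--Lu), so over $C$ a module is G-projective only if it is projective. It therefore suffices to show that $e_vC\e_{\nota}C$ is not projective.

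Its top consists of the simples $S_{t(\alpha)}$ coming from the arrow $v\to t(\alpha)$, so a projective cover is $e_{t(\alpha)}C\to e_vC\e_{\nota}C$. I would show this map has a nonzero kernel. The idea is that $e_{t(\alpha)}C$ contains paths returning to the G-projective vertex corresponding to the predecessor arrow on the forbidden cycle; these die in $e_vC$ because of the relations of $\I^{\CMA}$ coming from the forbidden-cycle zero relations. Alternatively, one can compare dimension vectors at the non-$\Q_0$ vertices: at those vertices $e_vC\e_{\nota}C$ is small, while the only candidate projectives are larger.

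I expect this last step to be the main obstacle, because it depends on the exact relations $\I^{\CMA}$ near a forbidden cycle, including the degenerate case of a forbidden loop $\alpha$ with $\alpha^2=0$. I would treat the loop case separately by direct computation.
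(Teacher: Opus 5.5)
Your parts (1) and (2) are fine and follow the paper's own proof. The paper reads off $\e_{\nota}C\e_{\nota}\cong A$ and $e_vC\e_{\nota}\cong\alpha A$ from the path basis of $(\Q^{\CMA},\I^{\CMA})$, using that every path leaving $v$ begins with $\alpha^+$; that is your quiver cross-check. Your abstract identification via $\Hom_A(A,G)$ says the same thing, modulo the side conventions you flagged.

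Part (3) has a genuine gap at the reduction step. The multiplication map $e_vC\e_{\nota}\otimes_{\e_{\nota}C\e_{\nota}}\e_{\nota}C\to e_vC\e_{\nota}C$ is always surjective, but it is not an isomorphism in general. It fails here because $\e_{\nota}C$ is not flat as a left $A$-module. Take a forbidden loop:
\begin{itemize}
  \item one vertex $1$ and a loop $\alpha$ with $\alpha^2\in\I$, so $A=\kk[\alpha]/(\alpha^2)$ and $G=\alpha A\cong S(1)$;
  \item $C$ has vertices $1,v$, arrows $\alpha^-\colon 1\to v$ and $\alpha^+\colon v\to 1$, and the relation $\alpha^+\alpha^-=0$.
\end{itemize}
As a left $A$-module, $e_1C=\kk\{e_1,\alpha^-\alpha^+\}\oplus\kk\alpha^-\cong A\oplus S(1)$. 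Hence $G\otimes_A\e_{\nota}C\cong e_1C/\alpha^-\alpha^+C$ is two-dimensional, whereas $e_vC\e_{\nota}C=\alpha^+C=\kk\alpha^+$ is one-dimensional.

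In general $e_vC\e_{\nota}C=\alpha^+C$ is only a proper quotient of $G\otimes_A\e_{\nota}C$. Non-projectivity of a quotient says nothing about the module itself, so your projective-cover argument is aimed at the wrong module. The paper's proof has the same flaw: it asserts $\alpha A\otimes_A\e_{\nota}C\cong\alpha^-\alpha^+C$, and $\alpha^-\alpha^+C\cong\alpha^+C$.

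The repair is to compute the tensor product by right exactness. Let $c'$ be the arrow of the forbidden cycle with $\s(c')=\t(\alpha)$, so that $\alpha c'\in\I$. This is the successor of $\alpha$, not the predecessor, and $c'=\alpha$ for a loop. Since $A$ is gentle, the kernel of $x\mapsto\alpha x$ on $e_{\t(\alpha)}A$ is $c'A$, so
\[
e_{\t(c')}A\xrightarrow{\;c'\cdot\;}e_{\t(\alpha)}A\xrightarrow{\;\alpha\cdot\;}\alpha A\to 0
\]
is exact. Apply $-\otimes_A\e_{\nota}C$, using two facts: $e_uA\otimes_A\e_{\nota}C\cong e_uC$ for $u\in\Q_0$, and $c'$ becomes the path $c'^-c'^+$ in $C$. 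This gives
\[
G\otimes_A\e_{\nota}C\cong e_{\t(\alpha)}C/c'^-c'^+C .
\]
Since $0\neq c'^-c'^+\in\rad(e_{\t(\alpha)}C)$, this is a nonzero proper quotient of the indecomposable projective $e_{\t(\alpha)}C$, so it is not projective. Finite global dimension of $C$ then gives non-G-projectivity, as you planned.

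For comparison, $e_vC\e_{\nota}C=\alpha^+C\cong e_{\t(\alpha)}C/c'^-C$. So the module you analysed differs from the correct one by exactly the one-dimensional simple spanned by the class of $c'^-$, at the vertex of $c'A$.
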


We call an indecomposable module $M$ over a gentle algebra $A$ is a \defines{forbidden module} if its is a string module
such that there is an arrow on the string corresponding to $M$ such that this arrow is on a forbidden cycle,
see Definition \ref{def:forb.mod.} of this paper.
Another recollement concerning the CM--Auslander algebra $A^{\CMA}$ of a gentle algebra $A$ is $\calR(A^{\CMA},1-\e_{\nota})$, which relates the quasi-tilting property of $A^{\CMA}/A^{\CMA}(1-\e_{\nota})A^{\CMA}$ and the homological properties of the quotient algebra $A$, see the following theorem.

\begin{theorem}[{\rm Theorems \ref{thm:main}, \ref{thm:main2}}]
Let $A$ be a gentle algebra. Then the following statements are equivalent:
\begin{itemize}
  \item[\rm(1)] the quotient $A^{\CMA}/A^{\CMA}(1-\e_{\nota})A^{\CMA}$ is quasi-tilted;
  \item[\rm(2)] the finitistic dimension of $A$ is at most $2$ and every non-forbidden indecomposable $A$-module $M$ with finite projective dimension and injective dimension satisfies $\pdim M + \idim M \=< 3$;
  \item[\rm(3)] for any homotopy string/band $\sfh$
    whose all arrows {\rm(}including all formal inverses of arrows{\rm)}
    are not on any forbidden cycle, the cohomological width of $\comp{X}$
    is less than or equal to $2$ {\rm(}$\comp{X}$ is the string/band complex corresponding to $\sfh${\rm)}.
\end{itemize}
\end{theorem}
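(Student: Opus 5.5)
Write $C=A^{\CMA}$ and $B:=C/C(1-\e_{\nota})C$. The plan is to first identify $B$ as a gentle algebra, then translate quasi-tiltedness of $B$ into a statement about its indecomposable modules, and finally relate those modules back to $A$.

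\textbf{Step 1: identify $B$.} By the description of $(\Q^{\CMA},\I^{\CMA})$ due to Chen--Lu, the bound quiver of $B$ is obtained from $(\Q^{\CMA},\I^{\CMA})$ by deleting every vertex attached to a non-projective indecomposable G-projective $\alpha A$, where $\alpha$ lies on a forbidden cycle. I expect the effect on $A$ to be the following. The forbidden cycles of $A$ are broken: each arrow $\alpha$ on such a cycle is replaced by the factorization through the new vertex $v_\alpha$, and deleting $v_\alpha$ removes exactly those arrows. Hence $B\cong \kk\Q'/\I'$, where $\Q'$ is $\Q$ with all arrows on forbidden cycles removed and $\I'$ is the restriction of $\I$. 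This is again gentle, and it has no forbidden cycles. So $B$ is gentle of finite global dimension, and its strings and bands are precisely the strings and bands of $A$ avoiding arrows on forbidden cycles. Moreover, $\modcat(B)$ embeds fully into $\modcat(C)$ via $\emb$. Under the identification with $A$, the $B$-modules are exactly the non-forbidden $A$-modules, with the same (co)syzygy computations performed inside $\Q'$.

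\textbf{Step 2: $(1)\Leftrightarrow(2)$.} Recall that $B$ is quasi-tilted if and only if $\gldim B\=<2$ and every indecomposable $M$ satisfies $\pdim M\=<1$ or $\idim M\=<1$ (Happel--Reiten--Smal\o). For a gentle algebra, the projective and injective resolutions of string and band modules are computed combinatorially along maximal paths. Since $\Q'\subseteq\Q$ and relations are inherited, for a non-forbidden $A$-module $M$ these resolutions coincide with the ones over $A$, except where the resolution over $A$ passes through forbidden cycles. In that case the $A$-dimension is infinite, whereas over $B$ the path simply stops.

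\begin{itemize}
  \item I would show that $\gldim B=\sup\{\pdim_A M : M \text{ non-forbidden}, \pdim_A M<\infty\}$. Since finite projective dimensions over a gentle algebra are realized by non-forbidden string modules, this supremum equals $\findim A$.
  \item The HRS condition, given $\gldim B\=<2$, fails exactly when there is an $M$ with $\pdim M=\idim M=2$. Such an $M$ has sum $4$, so the condition is equivalent to $\pdim M+\idim M\=<3$ for all indecomposable $B$-modules.
\end{itemize}

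\textbf{Step 3: $(1)\Leftrightarrow(3)$.} Since $B$ is gentle and $\gldim B<\infty$, we have $\Dcat^b(B)$ described by homotopy strings and bands avoiding forbidden arrows, i.e.\ exactly the $\sfh$ in (3). I would use the known characterization that a finite-dimensional algebra of finite global dimension is quasi-tilted if and only if every indecomposable object of $\Dcat^b$ has cohomology concentrated in two consecutive degrees up to shift. This holds because $\Dcat^b(B)\simeq\Dcat^b(\mathcal H)$ for a hereditary abelian category $\mathcal H$ forces indecomposables to be stalk complexes of $\mathcal H$, which are tilts of $B$-modules of width $\=<2$; the converse goes via $\Ext^2$-vanishing between $\pdim$ and $\idim$. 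Finally, I would match $\comp{X}$ for $\sfh$ over $A$ with the corresponding complex over $B$. The truncation to non-forbidden arrows makes these agree, so the cohomological width is the same.

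\textbf{Main obstacle.} The hard part will be Step 1 (the precise presentation of $B$, needing the Chen--Lu relations near the $v_\alpha$), together with the converse in Step 3. For the latter I would try to argue directly: a homotopy string of width $\>=3$ yields a $B$-module $M$ with $\pdim M=\idim M=2$, obtained by taking an appropriate cohomology or a truncation.
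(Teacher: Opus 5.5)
Your identification $\tC\cong\kk\Q'/\I'$ in Step 1 is correct, but the first bullet of Step 2 is false as stated. Take $\Q$ with a loop $c$ at $s$ and arrows $s\xrightarrow{b}x\xrightarrow{b_2}y\xrightarrow{b_3}z$, with $\I=\langle c^2,bb_2,b_2b_3\rangle$. This is gentle, $c$ is a forbidden cycle, and $B=\kk(s\to x\to y\to z)/\langle bb_2,b_2b_3\rangle$ has $\gldim B=3$. Over $A$ one has $\rad(e_sA)=cA\oplus bA$ with $cA\cong\M(b)$ and $\Omega(cA)\cong cA$. So $S(s)$ and $\M(b)$ have infinite projective dimension, and the remaining non-forbidden indecomposables $S(x),S(y),S(z),e_xA,e_yA$ have projective dimension at most $2$. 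Yet $\findim A=3$, and it is attained only by forbidden modules, e.g. $\M(cb)=e_sA/bA$, whose syzygy is $bA\cong S(x)$. So neither of your two equalities holds: finite projective dimensions need not be realized by non-forbidden modules. The equivalence $\findim A\le 2\iff\gldim B\le 2$ is still true, but it must be proved through forbidden paths. By Lemma \ref{lemm:forbcyc}, no forbidden path mixes cycle and non-cycle arrows, so the maximal forbidden paths of $(\Q',\I')$ are exactly the finite maximal forbidden paths of $(\Q,\I)$. The syzygy argument of Lemma \ref{lemm:main0302}\,(1) then gives $\findim A\le\gldim B$. Conversely, for such a path $\gamma_1\cdots\gamma_d$ the (possibly forbidden) module $e_{\s(\gamma_1)}A/\gamma_1A$ has projective dimension $d$. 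The paper instead gets $\gldim\tC\le 2$ from Beligiannis' formula for $\gldim C$.

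The more serious gap is in $(2)\Rightarrow(1)$. You note yourself that for a string $\sfs$ of $(\Q',\I')$ the $A$-resolution may run into a forbidden cycle, so that $\pdim_A$ or $\idim_A$ is infinite while the $B$-dimensions stay finite. Hypothesis (2) says nothing about such $\sfs$, and your plan never shows they cannot have $\pdim_B=\idim_B=2$. This is precisely case (a) in the proof of Lemma \ref{lemm:main0303}, where only the infiniteness over $A$ is recorded.

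The missing observation is local gentleness at a cycle vertex. Let $v$ lie on a forbidden cycle, with cycle arrows $c_{\mathrm{in}}$ ending and $c_{\mathrm{out}}$ starting at $v$, and let $a$, $b$ be the (at most one each) non-cycle arrows ending and starting at $v$. Then \ref{G2} and \ref{G3} give $ac_{\mathrm{out}}\notin\I$ and $c_{\mathrm{in}}b\notin\I$, hence $ab\in\I$. Consequently, both forbidden paths with respect to $\sfs$ at such an endpoint of a non-trivial string are trivial in $(\Q',\I')$. Moreover, at any endpoint the up-going and down-going forbidden paths meet in a relation; for example, $F_{\mathrm{lu}}F_{\mathrm{ld}}$ is again a forbidden path. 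So when $\gldim B\le 2$, the two length-$2$ paths required by conditions (B) and (C) of Lemma \ref{lemma:dim=2} sit at different endpoints. For $\sfs=e_v$ with $v$ on a cycle, they would concatenate through $ab\in\I$ into a forbidden path of length $4$. Hence both endpoints avoid the forbidden cycles and all arrows there lie in $\Q'$. The four forbidden paths with respect to $\sfs$ then coincide over $A$ and $B$, and $\M(\sfs)$ is a non-forbidden $A$-module with $\pdim=\idim=2$, contradicting (2).

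Finally, in Step 3 the equality of cohomological widths over $A$ and $B$ is only asserted. Since $e_vA$ and $e_vB$ differ (paths from $v$ may enter forbidden cycles), you must check that the degrees carrying cohomology of a minimal string complex depend only on two things: the lengths of the paths in $\sfh$, and whether their last arrows $\gamma$ admit $\delta$ with $\gamma\delta\in\I$ (such $\delta$ are non-cycle by Lemma \ref{lemm:forbcyc}). The paper avoids this for $(3)\Rightarrow(2)$ by writing down width-$3$ complexes over $A$ directly, from a length-$3$ forbidden path and from the walk $\alpha\beta\sfs\gamma\delta$. Your fallback runs the wrong way: width $3$ can also come from $\gldim B\ge 3$, not only from a module with $\pdim=\idim=2$.
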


Finally, we consider the Krull--Gabriel dimensions of three algebras
$A^{\CMA}$, $A$, and $A^{\CMA}/A^{\CMA}(1-\e_{\nota})A^{\CMA}$
given in the recollements $\calR(A^{\CMA}, \e_{\nota})$ and $\calR(A^{\CMA}, 1-\e_{\nota})$
in the case of $A$ being a gentle one-cycle algebra, see the following result.

\begin{corollary}[{Corollary \ref{coro:main3}}]
Let $A=\kk\Q/\I$ be a gentle one-cycle algebra. Then:
\begin{enumerate}[label=\text{\rm(\arabic*)}]
  \item $\KGdim\Ab(\Dcat^b(A)) \=< 2$ if and only if $\KGdim\Ab(\Dcat^b(A^{\CMA}))\=<2$;
  \item $\KGdim\Ab(\Dcat^b(A))=\KGdim\Ab(\Dcat^b(A^{\CMA}))$ if and only if $A\cong A^{\CMA}$;
  \item $\KGdim\Ab(\Dcat^b(A^{\CMA}/A^{\CMA}(1-\e_{\nota})A^{\CMA})) = 0$.
\end{enumerate}
\end{corollary}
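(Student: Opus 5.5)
The plan is to treat each part separately, relying on three inputs. The first is Bobiński's computation of Krull--Gabriel dimensions of derived-discrete and gentle one-cycle algebras. Recall that for a gentle one-cycle algebra $A$, the category $\Dcat^b(A)$ is discrete precisely when the cycle does not satisfy the clock condition, i.e.\ when $A$ is derived-discrete. In that case $\KGdim\Ab(\Dcat^b(A))\in\{0,1,2\}$: it is $0$ exactly when the algebra is derived equivalent to a hereditary algebra of Dynkin type, and $2$ for genuinely discrete non-Dynkin ones. Otherwise $A$ is derived equivalent to a tame hereditary algebra of type $\tilde{A}$ and the value is $2$. The second input is the description of $A^{\CMA}$ due to Chen--Lu. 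The third is the recollement $\calR(A^{\CMA},1-\e_{\nota})$.

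Part (3) comes first, since it feeds the others. Using the Chen--Lu description, the quotient $A^{\CMA}/A^{\CMA}(1-\e_{\nota})A^{\CMA}$ is the bound quiver algebra on the vertices indexed by $\ind(\Gproj(A))\backslash\ind(\proj(A))$, that is, by the arrows on forbidden cycles. Its arrows are those arrows of $\Q^{\CMA}$ between such vertices which survive after factoring out all paths through vertices of $\Q_0$. For a one-cycle algebra there is at most one forbidden cycle, namely the cycle itself when it is a full cycle of relations. If no forbidden cycle exists, the quotient is $0$ and the dimension is trivially $0$. If the cycle is forbidden, I will check from the Chen--Lu quiver that the arrows between the new vertices $\alpha A$ all factor through old vertices. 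The quotient is then semisimple, a product of copies of $\kk$, so $\Dcat^b$ has finitely many indecomposables up to shift and $\KGdim=0$.

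For part (2), note that $A\cong A^{\CMA}$ exactly when $\Gproj(A)=\proj(A)$, which for a gentle algebra means there is no forbidden cycle; in that case equality is trivial. Conversely, if a forbidden cycle exists, then $A$ is a one-cycle algebra whose cycle is a forbidden oriented cycle with full relations. It therefore fails the clock condition, so $A$ is derived-discrete and Gorenstein of infinite global dimension. I will then identify $A^{\CMA}$ via Chen--Lu: each arrow $\alpha$ of the cycle gets a new vertex inserted, and the result is again gentle one-cycle, with cycle length doubled. It has finite global dimension, since CM-Auslander algebras of Gorenstein algebras of finite CM type have finite global dimension. It is still derived-discrete, since the new relations along the cycle still break the clock condition. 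In Bobiński's classification, derived-discrete algebras of infinite global dimension and those of finite global dimension $\Lambda(r,n,m)$ are separated by their Krull--Gabriel dimensions: the infinite-global-dimension family sits at $1$, the finite one at $2$. The main obstacle is exactly this step. I must pin down the precise Bobiński values for the Vossieck/BGS families $\Lambda(r,n,m)$ with $r=n$ versus $r<n$, and verify which family $A^{\CMA}$ lands in. I would do this by computing the parameters $(r,n,m)$ of $A^{\CMA}$ from those of $A$: the number of relations stays $r$, while the cycle length doubles to $2n$, forcing $r<2n$.

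Part (1) then follows by cases. If there is no forbidden cycle, $A=A^{\CMA}$. If there is one, both algebras are derived-discrete, so both dimensions are at most $2$ by Bobiński. Hence $\KGdim\Ab(\Dcat^b(A))\=<2$ holds always for one-cycle gentle algebras. The same holds for $A^{\CMA}$, which is again gentle with at most one cycle (one checks it is gentle one-cycle or tree), so the equivalence holds. I would remark on this, possibly noting that both sides are always true.
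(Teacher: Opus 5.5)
Your arguments for (1) and (2) essentially follow the paper's route. You split on whether $(\Q,\I)$ has a forbidden cycle. If it has none, $A\cong A^{\CMA}$. If it has one, $A$ is derived discrete of infinite global dimension, so its dimension is $1$. Meanwhile $A^{\CMA}$ is a derived discrete gentle one-cycle algebra of finite global dimension that is not piecewise hereditary, so its dimension is $2$ by Theorem \ref{thm:BK}. Your direct clock-condition check replaces the paper's citation, and you are more careful than the paper about excluding the piecewise hereditary case. You do not need to compute any parameters $(r,n,m)$, since finiteness of global dimension is a derived invariant. Your closing remark that both sides of (1) always hold would need the value to be exactly $2$ in the clock-condition case. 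Theorem \ref{thm:BK} only gives $\geqslant 2$ there, but the equivalence itself does not need this.

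The genuine gap is in (3): you compute the wrong algebra. By (\ref{formula:idempotent}), $\e_{\nota}=\sum_{v\in\Q_0}\e_v$ is the sum of the idempotents at the \emph{old} vertices. Hence $1-\e_{\nota}=\sum_{i,j}e_{ij}$, and the quotient $\tC=C/C(1-\e_{\nota})C$ (with $C=A^{\CMA}$) kills the \emph{new} vertices $\mathfrak{v}^{-1}(c_{ij}A)$. So $\tC$ has vertex set $\Q_0$. Its arrows are those of $\Q_1$ not lying on a forbidden cycle, since each cycle arrow $c$ was replaced by $c^-c^+$ through a new vertex, and its relations are the inherited ones. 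This is the algebra the paper uses in Lemma \ref{lemm:main0303} and Example \ref{examp:quasi-tilted}. The algebra on the new vertices that you describe is $\bC=C/C\e_{\nota}C$ from Lemma \ref{lemm:recollement CMA}, which the paper already notes is semisimple. Your semisimplicity argument therefore says nothing about $\tC$. The missing step, which is the paper's argument, is as follows. When the cycle is forbidden, deleting its arrows from the one-cycle quiver leaves a forest, so $\tC$ is a product of gentle tree algebras. These are piecewise hereditary of type $\mathbb{A}$, hence derived discrete. Theorem \ref{thm:BK} \ref{thm:BK(1)} then gives Krull--Gabriel dimension $0$ for each factor, and hence for $\tC$.

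Your no-forbidden-cycle case fails for the same reason. There $1-\e_{\nota}=0$, so the quotient is $C=A$ itself, not $0$. A gentle one-cycle algebra without a forbidden cycle is either of derived type $\tilde{\mathbb{A}}$ (clock condition holds) or derived discrete but not piecewise hereditary. In both cases Theorem \ref{thm:BK} shows its Krull--Gabriel dimension is not $0$; for example, $1\rightleftarrows 2$ with the single relation $ab=0$ has dimension $2$. So in this case (3) does not hold as stated. It can only be proved under the hypothesis that the cycle is forbidden, i.e.\ $A\not\cong A^{\CMA}$. The paper's proof tacitly assumes this when it asserts that the bound quiver of the quotient is a gentle tree. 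You should make that hypothesis explicit rather than dismiss the other case as trivial.
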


\section{Gentle algebras, strings and bands} \label{background}

Gentle algebras are an important class of finite-dimensional algebras which are introduced by Assem and Skowro\'nski in \cite{AS1987}.
The indecomposable finite-dimensional modules over gentle algebras have been classified in \cite{WW1985, BR1987},
and the indecomposable objects in the derived categories of (graded) gentle algebras are described in \cite{BM2003, ALP2016}.
Base on the above results, the module categories and derived categories of gentle algebras,
skew-gentle algebras, and string algebras have garnered significant attention from numerous algebraists
and have been extensively studied, now becoming one of the primary research areas in representation theory,
including (derived) representation types \cite[etc]{Kra1991, V2001, ZH2016, Zhang2019Indecomposables},
derived categories and singularity categories \cite[etc]{AG2008, Kal2015, KY2018, KS2022, CS2023},
tilting and silting theories \cite[etc]{Pla2019, FGLZ2023, CS2023b, LiuZhou2025},
geometric models \cite[etc]{HKK2017,OPS2018,LP2020,BCS2021,QZZ2022, HZZ2023,APS2023, ZhangLiu2024},
and homological properties and recollements \cite[etc]{R1997, LGH2024, LLM2025}.

In this section, we recall the definition of gentle algebra and the description of its module category.

\subsection{Gentle pairs and gentle algebras}

Recall that a quiver is a quadruple $\Q=(\Q_0,\Q_1,\s,\t)$,
where $\Q_0$ and $\Q_1$ are two sets whose elements are called vertices and arrows, respectively,
and $\s,\t:\Q_1 \to \Q_0$ are the maps sending each arrow $a \in \Q_1$ to its starting point $\s(a)$ and ending point $\t(a)$.
Throughout this paper, we assume that $\Q$ is a finite quiver, i.e., $\Q_0$ and $\Q_1$ are finite sets,
and we adopt the convention that for two arrows $a,b \in \Q_1$ with $\t(a)=\s(b)$, their composition is written as $ab$,
Let $\kk$ be an algebraically closed field. Denote by $\kk\Q_{\>= 1}$ the ideal of the path algebra $\kk\Q$ generated by all arrows.
An admissible ideal $\I$ of $\kk\Q$ is an ideal satisfying
$(\kk\Q_{\>=1})^n \subseteq \I \subseteq (\kk\Q_{\>=1})^2$ for some $n\>= 2$.
and a bound quiver is defined as a pair $(\Q, \I)$ given by a quiver $\Q$ and an ideal $\I$.
For each arrow $a\in \Q_1$, we define that it has a \defines{formal inverse} and write it as $a^{-1}$.
Then, naturally, we can define $\Q_1^{-1}$ as the set of all formal inverses of arrows,
and the functions $\s$ and $\t$ can be extended to $\Q_1^{\pm1} := \Q_1\cup \Q_1^{-1}$
such that $\s(a)=\t(a^{-1})$ and $\t(a)=\s(a^{-1})$ for each arrow or each formal inverses of an arrow $a$
(where, $(a^{-1})^{-1}:=a$, and for each vertex $v$, the formal inverse of the path $e_v$ of length zero corresponding to $v$ is itself).

A \defines{gentle pair} is a bound quiver $(\Q,\I)$ satisfying the following four conditions:
\begin{enumerate}[label=\textrm{(G\arabic*)}]
  \item  \label{G1}
    every vertex of $\Q$ admits at most two incoming arrows and two outgoing arrows;
  \item  \label{G2}
    for arrows $a$, $b$, $c$ with $a \neq b$ and $\t(a) = \t(b) =\s(c)$, exactly one of the paths $ca$, $cb$ is in $\I$;
  \item  \label{G3}
    for arrows $a$, $b$, $c$ with $a \neq b$ and $\s(a) =\s(b) = \t(c)$, exactly one of the paths $ac$, $bc$ is in $\I$;
  \item  \label{G4}
    the ideal $\I$ is generated by paths of length two.
\end{enumerate}

\begin{definition}[{Gentle algebras \cite{AS1987}}]\rm
A gentle algebra $A$ is a finite-dimensional algebra $A=\kk\Q/\I$ whose bound quiver is a gentle pair.
\end{definition}

\begin{example}\rm \label{examp:gentle}
Let $A=\kk\Q/\I$ be an algebra given by $(\Q,\I)$, where $\Q$ is shown in \Pic \ref{fig:Examplegentle},
\begin{figure}[htbp]
  \centering
	\begin{tikzpicture}[scale=0.5]
		\foreach \x in {0,120,240}
		\draw[->][rotate around = {10+\x:(0,0)}][line width=1pt] (2,0) arc(0:100:2);
		\foreach \x in {0,120,240}
		\draw[->][rotate around = {5+\x:(0,0)}][line width=1pt] (6,0) arc(0:110:6);
		\draw (2,0) node{$1$} (-1*1,1.73*1) node{$2$} (-1*1,-1.73*1) node{$3$};
		\draw (4,0) node{$4$} (-1*2,1.73*2) node{$5$} (-1*2,-1.73*2) node{$6$};
		\draw (6,0) node{$7$} (-1*3,1.73*3) node{$8$} (-1*3,-1.73*3) node{$9$};
		\foreach \x in {0,120,240}
		\draw[->][rotate around = {\x:(0,0)}][line width=1pt] (3.7,0) -- (2.3,0);
		\foreach \x in {0,120,240}
		\draw[->][rotate around = {\x:(0,0)}][line width=1pt] (5.7,0) -- (4.3,0);
		\draw[rotate around = { 60:(0,0)}] (2.4,0) node{$a_{12}$};
		\draw[rotate around = {180:(0,0)}] (2.4,0) node{$a_{23}$};
		\draw[rotate around = {300:(0,0)}] (2.4,0) node{$a_{31}$};
		\draw[rotate around = { 60:(0,0)}] (6.4,0) node{$a_{78}$};
		\draw[rotate around = {180:(0,0)}] (6.4,0) node{$a_{89}$};
		\draw[rotate around = {300:(0,0)}] (6.4,0) node{$a_{97}$};
		\draw[rotate around = {  0:(0,0)}] (3,0.5) node{$a_{41}$};
		\draw[rotate around = {120:(0,0)}] (3,0.5) node{$a_{52}$};
		\draw[rotate around = {240:(0,0)}] (3,0.5) node{$a_{63}$};
		\draw[rotate around = {  0:(0,0)}] (5,0.5) node{$a_{74}$};
		\draw[rotate around = {120:(0,0)}] (5,0.5) node{$a_{85}$};
		\draw[rotate around = {240:(0,0)}] (5,0.5) node{$a_{96}$};
		\foreach \x in {0,120,240}
		\draw[line width=1pt][dotted]
		[rotate around = {\x:(0,0)}] (1.85,-0.5) arc(-90:-270:0.5);
		\foreach \x in {0,120,240}
		\draw[line width=1pt][dotted]
		[rotate around = {\x:(0,0)}] (5.95,-1) arc(-90:90:1);
		\foreach \x in {0,120,240}
		\draw[line width=1pt][dashed]
		[rotate around = {\x:(0,0)}] (3,0) arc(-180:0:1);
	\end{tikzpicture}
  \caption{\texttt{Bound quiver}}
  \label{fig:Examplegentle}
\end{figure}
and $\I$ is generated by the paths $a_{12}a_{23}$, $a_{23}a_{31}$, $a_{31}a_{12}$, $a_{74}a_{41}$,
$a_{85}a_{52}$, $a_{96}a_{63}$, $a_{78}a_{89}$, $a_{89}a_{97}$, and $a_{87}a_{78}$.
Then $A$ is a gentle algebra.
\end{example}

\subsection{Strings and bands}

Let $(\Q,\I)$ be a gentle pair. A \defines{walk} in $(\Q,\I)$ of length $n>0$ is a sequence $\sfw=\sfw_1\cdots \sfw_n$
in $\Q_1^{\pm 1}$ such that $\t(\sfw_{i+1})=\s(\sfw_i)$ holds for all $1\=< i<n$.
Each $\sfw_i$ is called the \defines{edges} of $\sfw$, and, in particular,
$\sfw_1$ and $\sfw_n$ are called the \defines{initial edge} and \defines{terminal edge} of $\sfw$, respectively.
A \defines{subwalk} of $\sfw$ is either a subsequence $\sfw_i\cdots \sfw_j$ of $\sfw$ ($1\=< i \=< j \=< n$)
or a trivial path $e_v$ corresponding to some vertex $v \in \{\s(\sfw_1), \ldots, \s(\sfw_n), \t(\sfw_n)\}$.
Naturally, we can define $\s(\sfw)=\s(\sfw_1)$ and $\t(\sfw)=\t(\sfw_n)$, and say that $w$ is a walk from $\s(\sfw)$ to $\t(\sfw)$.
The formal inverse of $w$ is defined as $\sfw^{-1}=\sfw_n^{-1}\cdots \sfw_1^{-1}$.
Furthermore, a walk is said to be
\begin{itemize}
  \item a \defines{trivial walk} if it is a trivial path $e_v$ with $\s(e_v)=\t(e_v)=v\in\Q_0$;
  \item a \defines{reduced walk} if it is either a trivial walk
    or a walk $\sfw  = \sfw_1\cdots \sfw_n$ such that $\sfw_{i+1}\neq \sfw_i^{-1}$ holds for all $1\=< i<n$;
  \item a \defines{string} if it is a reduced walk such that any subwalk of it does not in $\I$;
  \item a \defines{band} if it is a string with $\t(\sfw)=\s(\sfw)$ such that $\sfw$ is not a non-trivial power of any string and, for all $n\>=2$, $\sfw^n$ is a string.
\end{itemize}

Two strings $\sfs$ and $\sfs'$ are called \defines{equivalent} if one of $\sfs=\sfs'$ and $\sfs^{-1}=\sfs'$ holds.
Two bands $\sfb$ and $\sfb'$ are called \defines{equivalent} if there exists $0\=< t < \ell(\sfb)$
such that one of $\sfb=\sfb'[t]$ and $\sfb^{-1}=\sfb'[t]$ holds ($\ell$ is the map sending each string/band to its length).
Here, for each band $\sfb = b_1\cdots b_n$ of length $n$, $\sfb[t]$ ($0\=< t < n$)
represents the band $b_{1+t}b_{2+t}\cdots b_{n}b_1b_2\cdots b_{t}$.
For simplicity, we still use $[\sfs]$ (resp., $[\sfb]$) to represent
the equivalence class containing $\sfs$ (resp., $\sfb$), this not cause confusion.
The module category $\modcat(A)$ of a gentle algebra $A$ can be described by using strings and bands.

\begin{theorem}[{Butler, Ringel, Wald, and Waschb\"{u}sch \cite{WW1985, BR1987}}] \label{thm:WWBR}
Let $A=\kk\Q/\I$ be a gentle algebra and $\Str(A)$ {\rm(}resp., $\Band(A)${\rm)}
be the set of all equivalence classes of strings {\rm(}resp., bands{\rm)}.
Then there is a bijection
\[\M: \Str(A) \cup (\Band( A)\times\mathscr{J}) \to \ind(\modcat(A))\]
from the disjoint union $\Str(A) \cup (\Band(A)\times\mathscr{J})$ to
the set of all isoclasses of indecomposable $A$-modules,
where $\mathscr{J}$ is the set of all Jordan block with non-zero eigenvalue.
\end{theorem}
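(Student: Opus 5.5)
The statement is the classical classification of indecomposable modules over a gentle (more generally, string) algebra. I will reduce it to a functorial filtration argument.

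\textbf{Step 1: construct the modules.} For each string $\sfs=\sfs_1\cdots\sfs_n$, define the string module $\M(\sfs)$ as follows.
\begin{itemize}
  \item Take a $\kk$-basis $z_0,\dots,z_n$, with $z_i$ placed at the vertex $\t(\sfs_i)$ (and $z_0$ at $\s(\sfs_1)$).
  \item Each arrow $\sfs_i$ or $\sfs_i^{-1}$ sends the corresponding basis vector to its neighbour.
  \item All other arrows act by zero.
\end{itemize}
Condition \ref{G4} and the fact that no subwalk lies in $\I$ guarantee that the relations are satisfied.

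For a band $\sfb$ and a Jordan block $J\in\mathscr{J}$ of size $m$ with eigenvalue $\lambda\neq0$, define the band module $\M(\sfb,J)$ by the following rules.
\begin{itemize}
  \item Put a copy of $\kk^m$ at each position of $\sfb$.
  \item Let $b_1,\dots,b_{n-1}$ act by identity maps.
  \item Let the closing edge $b_n$ act by $J$ (or $J^{-1}$ if it is an inverse letter).
\end{itemize}
Then I check that equivalent strings and bands give isomorphic modules. Reversing a string reverses the basis. Rotating a band conjugates the gluing map, so the Jordan block is unchanged up to similarity.

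\textbf{Step 2: indecomposability and injectivity of $\M$.} I compute $\End_A(\M(\sfs))$ via the Crawley-Boevey description of homomorphisms between string modules. Such homomorphisms come from pairs consisting of a factor substring of the source and an image substring of the target. From this description the endomorphism ring is local: it is the identity plus a nilpotent ideal. Band modules are handled the same way, using $\End(\kk^m,J)$, which is local.

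Non-isomorphic parameters are separated by the functors constructed in Step 3. These functors detect the string or band and the Jordan block up to equivalence.

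\textbf{Step 3: surjectivity, the main obstacle.} Here I follow Butler--Ringel. For each vertex $v$, the two outgoing arrows and the two incoming arrows are organised into two "letters" at $v$ by \ref{G2} and \ref{G3}.

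For a finite-dimensional module $M$ and a half-word $C$ (an infinite or finite sequence of letters), define subspaces $C^{\pm}(M)\subseteq M e_v$ by iterated images and kernels. These give refined functors
\[
F_{C,D}=\frac{C^+\cap D^+}{(C^-\cap D^+)+(C^+\cap D^-)}.
\]
I then verify two things.
\begin{itemize}
  \item For every $v$, these subspaces form a filtration of $Me_v$ that is linearly ordered on each side.
  \item Each $F_{C,D}$ vanishes on all string and band modules except exactly one family. On that family it is a single copy of $\kk$ (for strings), or the functor to $\kk[T,T^{-1}]$-modules (for bands).
\end{itemize}

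Next, for $M$ with $F_{C,D}(M)\neq0$, I lift a basis element to a map from the corresponding string or band module into $M$. This map is split injective, which follows from the order properties of the filtration. Induction on $\dim M$ then shows that every indecomposable $M$ is some $\M(\sfs)$ or $\M(\sfb,J)$.

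The delicate points are the following.
\begin{itemize}
  \item The linear order of the intervals, which comes from the "exactly one relation" axioms.
  \item The splitting argument for the lifted maps.
  \item Band periodicity: words that are periodic correspond to the band case.
\end{itemize}
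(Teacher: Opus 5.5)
The paper gives no proof of its own here; the theorem is quoted from Wald--Waschb\"usch and Butler--Ringel. Your outline is the Butler--Ringel functorial-filtration argument, so the route is the cited one, but two points fail as written.

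First, the Step~1 claim that equivalent bands give isomorphic modules is false for inversion. Reversing a band inverts its monodromy, so $\M(\sfb^{-1},J_m(\lambda))\cong\M(\sfb,J_m(\lambda^{-1}))$. This is not isomorphic to $\M(\sfb,J_m(\lambda))$ unless $\lambda^2=1$.

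Concretely, take the Kronecker quiver with arrows $a,b\colon 1\to 2$, the band $\sfb=ab^{-1}$ and $m=1$. Your recipe gives $\kk\rightrightarrows\kk$ with $(a,b)$ acting by $(1,\lambda^{\epsilon})$ for $\sfb$, and by $(\lambda^{\epsilon},1)$ for $\sfb^{-1}$. Here $\epsilon=\pm1$ is fixed by your convention for inverse letters. These two modules are non-isomorphic for $\lambda^2\neq 1$.

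So $\M$ is not well defined on $\Band(A)\times\mathscr{J}$ as you set it up. It also clashes with your Step~2: the band functors there do distinguish these two modules, so Steps~1 and~2 are inconsistent. The standard repair is to fix one orientation in each class of $\Band(A)$ and define $\M$ through that representative. Equivalently, take bands up to rotation only and identify $(\sfb,J)$ with $(\sfb^{-1},J^{-1})$. The statement as printed tacitly needs the same convention.

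Second, the surjectivity step is not established.

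\emph{Bands.} For a band-type pair $(C,D)$, lifting a single basis vector of $F_{C,D}(M)$ does not give a split monomorphism. Take $M=\M(\sfb,J_2(\lambda))$, so $T$ acts on $F_{C,D}(M)\cong\kk^2$ by one Jordan block of size $2$. Lifting an eigenvector of $T$ gives the embedding $\M(\sfb,J_1(\lambda))\hookrightarrow M$, which cannot split because $M$ is indecomposable. You must first decompose $F_{C,D}(M)$ as a $\kk[T,T^{-1}]$-module and realise a whole indecomposable summand.

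\emph{Strings.} Even here, split injectivity needs a retraction, i.e.\ a homomorphism $M\to\M(\sfs)$. Order properties of subspaces of the $Me_v$ do not by themselves produce one. There are two ways to close the gap.
\begin{itemize}
\item Construct such maps dually: realise $F_{C,D}$ by maps out of $M$, for instance through the dual module over $A^{\op}$. Then use that the endomorphism ring of a canonical module is local, so an endomorphism inducing an invertible map on $F_{C,D}$ is an automorphism.
\item Follow Ringel's method and avoid splitting individual maps. Realise all of $F_{C,D}(M)$ for every pair of string or band type, and assemble a single map $\Phi\colon\bigoplus N\to M$. Use one splitting point per string or band, together with the natural isomorphisms between the functors attached to different splitting points of the same word. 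Check that every $F_{C,D}(\Phi)$ is bijective. Then $\Phi$ is an isomorphism, because the $F_{C,D}(M)$ are the subquotients of a finite, exhaustive, functorial filtration of each $Me_v$.
\end{itemize}
That covering statement is the real content of the proof, and your proposal only lists it. It includes the fact that, for finite-dimensional $M$, pairs of neither string nor band type have $F_{C,D}(M)=0$.
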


\begin{notation} \rm
For any string $\sfs$ or band $\sfb$ with a Jordan block $\pmb{J}$,
we define $\M(\sfs):=\M([\sfs])$ and $\M((\sfb,\pmb{J})):=\M(([\sfb], \pmb{J}))$.
\end{notation}

This theorem provides two important facts as follows:

\begin{itemize}
  \item $\ind(\modcat(A)) = \image(\M|_{\Str(A)}) \cup \image(\M|_{\Band( A)\times\mathscr{J}})$;
  \item $\image(\M|_{\Str(A)}) \cap \image(\M|_{\Band(A)\times\mathscr{J}}) = \varnothing$.
\end{itemize}

In \cite{BR1987}, each indecomposable module lying in $\image(\M|_{\Str(A)})$ is called a \defines{string module},
and each indecomposable module lying in $\image(\M|_{\Band(A)\times\mathscr{J}})$ is called a \defines{band module}.

\begin{remark}\label{rmk:WWBR} \rm
By Theorem \ref{thm:WWBR}, any indecomposable $A$-module $M$ is either a string module or a band module.
If $M$ is a band module, then it is well-know that the projective dimension and injective dimension of $M$ are $1$,
and if $M$ is a string module, then its projective dimension and injective dimension can be described by using
forbidden paths on the bound quiver $(\Q,\I)$ of $A$.
\end{remark}

\section{Quasi-tilted gentle algebras}

We use ``$\pdim$'', ``$\idim$'', ``$\gldim$'', and ``$\findim$'' to represent
projective dimension, injective dimension, global dimension, and finitistic dimension, respectively.
A finite-dimensional algebra $A$ is said to be a \defines{quasi-tilted algebra} if
its global dimension $\gldim A$ is less than or equal to $2$,
and for each $A$-module $M$, either $\pdim M \=< 1$ of $\idim M \=< 1$ holds, see \cite{HRS1994}.
This homological condition characterizes the symmetry of its derived category
and connects tilted-algebras and hereditary algebras.
Therefore, it is an important problem to determine the criteria for whether a given algebra is quasi-tilted.

\subsection{Homological bounds}

The property of being a quasi-tilted algebra is limited by the global dimension of the algebra.
Thus, if one needs to determine whether an algebra is quasi-tilted,
it is also necessary to determine the homological dimensions
(precisely, to determine the projective dimensions and injective dimensions)
of all modules over this algebra. In particular, \cite[Theorem 3.7]{LMXZ2025}
shows that it is necessary to provide conditions for determining
when the homological dimensions of modules are less than or equal to $2$
in the case where the algebra is gentle. Now, we recall some concepts in \cite{AG2008}
and some results in \cite{LMXZ2025}, which are important for this paper.

\begin{definition}[{\rm\cite[Section 2]{AG2008}}] \rm \label{def:forb.path}
A \defines{forbidden path} $F=a_1\cdots a_n$ ($n\in\NN$) is a path on $\Q$ such that
one of the following conditions holds:
\begin{itemize}
  \item $n=0$ (in this case, $F=e_v$ is a path of length zero corresponding to some vertex $v$),
    and $v$ is a relational vertex;
  \item $n=1$, i.e., $F$ is a path of length $1$;
  \item $n\>= 2$, and for any $1\=< i < n$, we have $a_ia_{i+1}\in\I$.
\end{itemize}

Furthermore, if for any arrow $\alpha$ with $\t(\alpha)=\s(a_1)$ (resp., $\t(a_n)=\s(\alpha)$),
we have $\alpha a_1\notin\I$ (resp., $a_n\alpha \notin \I$),
then we call $F$ is a \defines{left {\rm(}resp., right{\rm)} maximal forbidden path};
if $F$ is both left maximal and right maximal, then we call it is a \defines{maximal forbidden path};
if $F$ is an oriented cycle and $a_na_1\in\I$, then we call it is a \defines{forbidden cycle}
(or a \defines{full-relational oriented cycle}).
\end{definition}

Obviously, all vertices on a forbidden path are relational vertices.
However, the definition of relational vertices can be further refined to a more specific reduced walk,
i.e., we say a vertex $v$ on a reduced walk $\sfw=w_1 \cdots w_n$ ($w_1,\ldots,w_n\in\Q_1^{\pm1}$) is a
\defines{relational vertex on $\sfw$} if one of following conditions holds:
\begin{itemize}
  \item there are two $w_i$, $w_{i+1}\in\Q_1$ such that $v$, $\t(w_i)$, and $\s(w_{i+1})$ coincide;
  \item there are two $w_i$, $w_{i+1}\in\Q_1^{-1}$ such that $v$, $\t(w_{i+1}^{-1})$, and $\s(w_i^{-1})$ coincide.
\end{itemize}

\begin{definition}[{\rm\cite[Section 3]{LMXZ2025}}] \rm
A \defines{strong source} (resp., strong sink) $v$ of a quiver $\Q$ is a source (resp., sink) of this quiver
such that there at most one arrow $\alpha$ with $\s(\alpha)=v$ (resp., $\t(\alpha)=v$).
\end{definition}

\begin{theorem}[{\cite[Theorem 3.7]{LMXZ2025}}] \label{thm:LMXZ}
Let $A$ be a gentle algebra with finite global dimension.
If the starting {\rm(}or ending{\rm)} vertices of all maximal forbidden paths of length $\>=2$
are strong sources {\rm(}or strong sinks{\rm)},
then \[ \sup_{M\in\ind(\modcat(A))}(\pdim M + \idim M) \=< 2~\gldim A -1. \]
\end{theorem}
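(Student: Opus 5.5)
Since $\gldim A<\infty$, no forbidden cycle exists, so every maximal forbidden path is a finite path $F=a_1\cdots a_n$. The proof uses the description of projective and injective dimensions of indecomposable modules through forbidden paths, as in \cite{LMXZ2025} and Remark \ref{rmk:WWBR}. Band modules have $\pdim=\idim=1$, so for them the sum is $2\=<2\gldim A-1$ whenever $\gldim A\>=2$ (the hereditary case is trivial). It therefore suffices to bound $\pdim M+\idim M$ for a string module $M=\M(\sfs)$.

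\textbf{Step 1: the dimensions as lengths of forbidden paths.} The minimal projective resolution of $\M(\sfs)$ is read off from the two endpoints of $\sfs$. Each syzygy beyond the first is governed by forbidden threads leaving the endpoints, so
\[
\pdim M=\max\{\ell(F_1),\ell(F_2)\}+\delta,
\]
where $F_1,F_2$ are the right parts of forbidden paths starting at the endpoints (or peak relational vertices) of $\sfs$ and $\delta\in\{0,1\}$. Dually, $\idim M$ is governed by forbidden paths ending at the endpoints. I would recall this explicitly, together with the standard fact that $\gldim A$ equals the maximal length of a maximal forbidden path.

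\textbf{Step 2: the endpoint-sharing argument.} Suppose $\pdim M=p\>=2$ and $\idim M=q\>=2$. Then there is a forbidden path $F$ of length about $p$ leaving an endpoint $x$ of $\sfs$, and a forbidden path $G$ of length about $q$ entering an endpoint $y$ of $\sfs$. Extend $F$ to a maximal forbidden path. This extension must pass backwards through $x$, or start at $x$. By hypothesis its starting vertex is a strong source, so no arrow enters it, and at most one arrow leaves it.

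The main task is a case analysis on whether $x=y$ and on the local shape of $\sfs$ at $x$ and $y$. The goal is to show that $F$ and $G$ together form, or lie inside, a single forbidden path, with an overlap of at least one arrow. This gives
\[
p+q\=<\ell(\text{maximal forbidden path})+1\=<\gldim A+1\=<2\gldim A-1 .
\]
Otherwise $F$ and $G$ lie on distinct maximal forbidden paths. In that case the strong source/sink condition at their starting or ending vertices forces one of the syzygy contributions to lose a step, which gives $p+q\=<2\gldim A-1$ directly.

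\textbf{Main obstacle.} The delicate case is when the longest projective and injective threads both realize $\gldim A$ on different maximal forbidden paths. The naive bound there is only $2\gldim A$. The strong source (or sink) hypothesis must then be used to show that the endpoint of $\sfs$ at which the thread of length $\gldim A$ begins is itself the strong source. In that situation the corresponding indecomposable projective is uniserial along $F$, and the first syzygy step contributes $0$ rather than $1$, so exactly one unit is saved. I would try to verify this first for the two configurations $\sfs=e_x$ (simple modules) and $\sfs$ having a single arrow at $x$, since the general case reduces to these by looking only at the first arrow at each end of $\sfs$.
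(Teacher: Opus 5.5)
The paper does not prove this statement; it only quotes Theorem 3.7 of \cite{LMXZ2025}. So your proposal has to stand on its own, and it has a genuine gap exactly where the content lies.

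\textbf{Problems with the proposal.} First, the hereditary case is not trivial; the inequality actually fails there. For $A=\kk(1\to 2\to 3)$ the hypothesis is vacuous and $\gldim A=1$. Yet $\pdim S(2)=\idim S(2)=1$, so the sum is $2>2\gldim A-1=1$. The statement must therefore be read with $\gldim A\geqslant 2$.

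Second, your Step 2 does not prove the one thing that needs proof, namely that $\pdim M=\idim M=\gldim A$ is impossible.
\begin{itemize}
  \item Your bound $p+q\leqslant \ell+1$ in the ``single forbidden path'' case is false once the overlap is longer than one arrow. Take arrows $a_1\colon u\to v$, $a_2\colon v\to y$, $a_3\colon y\to w$, $a_4\colon w\to z$, $\alpha\colon v\to x$, $\delta\colon x\to w$, with relations $a_1a_2$, $a_2a_3$, $a_3a_4$. The hypothesis holds and $\gldim A=4$. The module $\M(\alpha\delta)$ has $\pdim=\idim=3$, with threads $a_2a_3a_4$ and $a_1a_2a_3$ overlapping in two arrows. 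So $p+q=6>5$.
  \item Your ``otherwise'' branch simply asserts the conclusion.
  \item The mechanism you propose for the critical case is not what happens. A uniserial projective whose first syzygy step contributes $0$ plays no role.
  \item The $+\delta$ in Step 1 is spurious. As in the proof of Lemma \ref{lemma:dim=2}, $\pdim\M(\sfs)=\max\{\ell(F_{\mathrm{ld}}),\ell(F_{\mathrm{rd}})\}$ as soon as this maximum is $\geqslant 2$, because interior valleys only contribute projective summands to $\Omega_1$.
\end{itemize}

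\textbf{The missing idea.} The hypothesis eliminates the string itself, not a syzygy step. Let $d=\gldim A\geqslant 2$; this is the maximal length of a forbidden path. Suppose $\pdim M=d$ for $M=\M(\sfs)$, say $\ell(F_{\mathrm{ld}})=d$.
\begin{itemize}
  \item $F_{\mathrm{ld}}$ cannot be extended, so it is a maximal forbidden path. Hence its starting vertex $v$, which is an endpoint of $\sfs$, is a strong source.
  \item The edge of $\sfs$ at $v$ would have to be either the inverse of an arrow ending at $v$, or an arrow starting at $v$ other than the first arrow of $F_{\mathrm{ld}}$. Neither exists.
  \item Therefore $\sfs=e_v$ and $M=S(v)$, which is injective because $v$ is a source.
\end{itemize}
Thus $\pdim M=d$ forces $\idim M=0$. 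Otherwise $\pdim M\leqslant d-1$ and $\idim M\leqslant d$. In both cases $\pdim M+\idim M\leqslant 2d-1$.

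Band modules give $2\leqslant 2d-1$. The strong-sink version is dual: $\idim M=d$ forces $M$ to be a simple projective.

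So in the delicate case the saving is not one unit on the projective side but the whole injective dimension. Your planned test on $\sfs=e_x$ would have revealed this, but the outcome you anticipated is the wrong one.
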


\begin{remark}\label{rmk:LMXZ} \rm
The supremum given in Theorem \ref{thm:LMXZ} is called the \defines{homological bounds}.
By this theorem, if $A$ is gentle, then the definition of a quasi-tilted algebra has an equivalent statement,
namely, $\gldim A \=< 2$, and for all non-projective indecomposable modules
and non-injective indecomposable modules $M$, we have $\pdim M  + \idim M \=< 3$,
see \cite[Corollaries 4.2 and 4.3]{LMXZ2025}.
\end{remark}

\subsection{Modules with projective/injective dimensions two}

Usually, if we need to compute the projective dimension of a string module $\M(\sfs)$ corresponding to a string $\sfs$,
we need to consider four important forbidden paths $F_{\mathrm{lu}}$, $F_{\mathrm{ru}}$, $F_{\mathrm{ld}}$, $F_{\mathrm{rd}}$
shown in \Pic \ref{fig:four forb paths} (their lengths may be zero),
\begin{figure}[htbp]
\centering
\[
\xymatrix{
\bullet \ar@{~>}[rd]^{F_{\mathrm{lu}}} &&&& \bullet \ar@{~>}[ld]_{F_{\mathrm{ru}}}\\
& v \ar@{~>}[ld]^{F_{\mathrm{ld}}} \ar@{~}[rr]^{\sfs}& & w \ar@{~>}[rd]_{F_{\mathrm{rd}}} & \\
\bullet & & & & \bullet
}
\]
\caption{\texttt{The string $\sfs$ and the four forbidden paths associated with it}}
\label{fig:four forb paths}
\end{figure}
where $v$, as a vertex on the reduced walk $F_{\mathrm{lu}}\sfs$ (resp., $F_{\mathrm{ld}}^{-1}\sfs$), is not a relational vertex;
and $w$, as a vertex on the reduced walk $\sfs F_{\mathrm{ru}}^{-1}$ (resp., $\sfs F_{\mathrm{rd}}$), is not a relational vertex;
$F_{\mathrm{lu}}$ and $F_{\mathrm{ru}}$ are left maximal;
and $F_{\mathrm{ld}}$ and $F_{\mathrm{rd}}$ are right maximal.
We call the above four forbidden paths are \defines{forbidden paths with respect to $\sfs$}.

\begin{lemma} \label{lemma:dim=2}
A string module $\M(\sfs)$ corresponding to a string $\sfs$ in a gentle pair $(\Q,\I)$ has both a projective dimension of $2$
and an injective dimension of $2$ if and only if the following conditions \ref{lemma:dim=2 (A)}, \ref{lemma:dim=2 (B)} and \ref{lemma:dim=2 (C)} hold.
\begin{enumerate}[label=\text{\rm(\Alph*)}]
  \item \label{lemma:dim=2 (A)}
    The lengths of all forbidden paths with respect to $\sfs$ are less than or equal to $2$.
  \item \label{lemma:dim=2 (B)}
    There is at least one left maximal forbidden path $F$ with respect to $\sfs$ satisfying $\ell(F)=2$.
  \item \label{lemma:dim=2 (C)}
    There is at least one right maximal forbidden path $F$ with respect to $\sfs$ satisfying $\ell(F)=2$.
\end{enumerate}
\end{lemma}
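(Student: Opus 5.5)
The plan is to compute $\pdim\M(\sfs)$ and $\idim\M(\sfs)$ explicitly from the four forbidden paths $F_{\mathrm{lu}}, F_{\mathrm{ru}}, F_{\mathrm{ld}}, F_{\mathrm{rd}}$ with respect to $\sfs$. The lemma then reduces to comparing two numbers with $2$.

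The first step is the projective side. A minimal projective presentation of $\M(\sfs)$ has its top and first syzygy governed by the ends of $\sfs$. Following the description in \cite{AG2008} (see also \cite{LMXZ2025}), the syzygy $\Omega\M(\sfs)$ is a direct sum of at most two string modules. These are uniserial pieces starting at the two ends of $\sfs$, determined by the arrows leaving $v$ and $w$ that do not extend $\sfs$. Iterating, each further syzygy is a projective generated by the next arrow of the forbidden path $F_{\mathrm{ld}}$ (resp. $F_{\mathrm{rd}}$). A forbidden path $a_1\cdots a_n$ produces the resolution
\[0\to a_nA\to\cdots\to a_1A\to e_{\s(a_1)}A,\]
with $a_iA\cong e_{\t(a_i)}A$ modulo the next relation. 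This gives a formula of the form
\[\pdim\M(\sfs)=\max\{\ell(F_{\mathrm{ld}}),\ell(F_{\mathrm{rd}})\}\]
up to the boundary conventions. The conventions are that the maximum is infinite when a forbidden path enters a forbidden cycle, and that length-zero paths at non-relational ends count as $0$. The degenerate case of a projective module also has to be checked here.

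The second step is the dual computation via $D\mathrm{Hom}$, i.e.\ using the opposite gentle algebra. This expresses $\idim\M(\sfs)$ through the left maximal forbidden paths $F_{\mathrm{lu}}$ and $F_{\mathrm{ru}}$:
\[\idim\M(\sfs)=\max\{\ell(F_{\mathrm{lu}}),\ell(F_{\mathrm{ru}})\}.\]

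Given these two formulas, the equivalence is immediate.

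\begin{itemize}
\item Condition \ref{lemma:dim=2 (A)} says both maxima are finite and at most $2$.
\item Condition \ref{lemma:dim=2 (B)} says the left-maximal maximum equals $2$, i.e.\ $\idim\M(\sfs)=2$.
\item Condition \ref{lemma:dim=2 (C)} says the right-maximal maximum equals $2$, i.e.\ $\pdim\M(\sfs)=2$.
\end{itemize}

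Conversely, $\pdim=\idim=2$ forces all four lengths to be at most $2$, with a $2$ attained on each side.

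The main obstacle is the careful syzygy computation at the ends of $\sfs$. The cases are whether $\sfs$ starts or ends with a direct or an inverse arrow, whether $v$ or $w$ has an extra outgoing arrow, and the trivial string. The aim is to verify that the length of the relevant forbidden path equals exactly the projective dimension contribution, with no off-by-one error. I would first treat the case where $\sfs$ ends in direct arrows at both sides and the syzygies are uniserial. The remaining cases I would reduce to it by noting that a hook or cohook at an end only shifts which vertex the forbidden path starts from, which is exactly how $F_{\mathrm{ld}}$ and $F_{\mathrm{rd}}$ are defined, via the non-relational condition at $v$ and $w$.
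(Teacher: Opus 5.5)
Your proposal is correct and follows essentially the same route as the paper. The paper likewise derives $\pdim\M(\sfs)=\max\{\ell(F_{\mathrm{ld}}),\ell(F_{\mathrm{rd}})\}$ from the syzygy decomposition $\Omega_1(\M(\sfs))\cong L_1\oplus P\oplus R_1$ (citing \cite[Lemma 3.4]{ZhangLiu2024}), obtains the formula for $\idim\M(\sfs)$ dually, and then reads off (A)--(C).

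One correction: $\Omega_1(\M(\sfs))$ is not just ``at most two string modules''. It also has projective summands coming from the interior valleys of $\sfs$, and your displayed sequence should be $\cdots\to e_{\t(a_2)}A\to e_{\t(a_1)}A\to a_1A\to 0$, whose successive syzygies are $a_2A, a_3A,\ldots$. Because of those projective summands, the max formula can fail when $\pdim\M(\sfs)\leqslant 1$. This is harmless for the lemma, since only the value $2$ matters.
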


%

\begin{proof}
Sufficiency (``if'' part):
Assume that $\sfs$ is a string such that $\pdim \M(\str)=\idim \M(\str)=2$, and, as a general situation,
$\sfs$ has the form shown in \Pic \ref{fig:four forb paths},
where $F_{\mathrm{lu}}$ and $F_{\mathrm{ru}}$ are left maximal forbidden paths of length $\=<2$,
and $F_{\mathrm{ld}}$ and $F_{\mathrm{rd}}$ are right maximal forbidden paths of length $\=<2$.
Next, we show
\begin{align}
  \pdim\M(\sfs) &= \max\{\ell(F_{\mathrm{ld}}), \ell(F_{\mathrm{rd}})\} \label{formula:lemma:dim=2 pdim} \\
  \idim\M(\sfs) &= \max\{\ell(F_{\mathrm{lu}}), \ell(F_{\mathrm{ru}})\} \label{formula:lemma:dim=2 idim}
\end{align}
in this proof.

If $\ell(F_{\mathrm{ld}})=\ell(F_{\mathrm{ld}})=2$, then we assume $F_{\mathrm{ld}}=x_1x_2$,
and $F_{\mathrm{rd}}=y_1y_2$ ($x_1, x_2, y_1, y_2\in\Q_1$).
Then, by \cite[Lemma 3.4]{ZhangLiu2024}, the 1-syzygy $\Omega_1(\M(\sfs))$ of $\sfs$ is isomorphic to
\begin{align}\label{formula:lemma:dim=2 1-syzygy}
  \Omega_1(\M(\sfs)) \cong L_1 \oplus P \oplus R_1,
\end{align}
where $\top L_1 = S(\t(x_1))$, $\top R_1 = S(\t(y_1))$, and $P$ is a projective $A$-module
(for any $v\in\Q_0$, $S(v)$ is the simple module corresponding to $v$).
Thus, the projective cover of $\Omega_1(\M(\sfs))$ is of the form
$P(\t(x_1)) \oplus P \oplus P(\t(y_1)) \to \Omega_1(\M(\sfs))$,
it follows that the 2-syzygy of $\M(\sfs)$ is
\begin{align}\label{formula:lemma:dim=2 2-syzygy}
  \Omega_2(\M(\sfs)) \cong L_2\oplus R_2,
\end{align}
where $\top L_2 \cong S(\t(x_2))$, $\top R_2 \cong S(\t(y_2))$.
Note that $A$ is gentle and $F_2, G_2$ are right maximal,
then $L_2$ and $R_2$ are projective. So, $\pdim \M(\sfs)= 2 =
\ell(F_{\mathrm{ld}}) = \ell(G_{\mathrm{rd}})$ in this case.

If $\ell(F_{\mathrm{ld}})=2 > \ell(F_{\mathrm{rd}}) =1$,
then $\Omega_1(\M(\sfs))$ is still of the form given in (\ref{formula:lemma:dim=2 1-syzygy}).
However, one can check that $R_1$ is projective (or equivalently, one can check that
$R_2$, the direct summand of $\Omega_2(\M(\sfs))$ shown in \ref{formula:lemma:dim=2 2-syzygy}, is zero).
So, $\pdim \M(\sfs)= 2 =\ell(F_{\mathrm{ld}}) > \ell(G_{\mathrm{rd}}) = 1$.
Similarly, if $\ell(F_{\mathrm{ld}})=1 < \ell(F_{\mathrm{rd}}) =2$,
we have $\pdim \M(\sfs)= 2 =\ell(F_{\mathrm{rd}}) > \ell(G_{\mathrm{ld}}) = 1$.
Furthermore, all cases satisfying $\ell(F_{\mathrm{ld}})\=<1$ and $\ell(F_{\mathrm{rd}}) \=<1$
can be computed in using similar way. Therefore, we have (\ref{formula:lemma:dim=2 pdim}).
In a dual way, we can prove (\ref{formula:lemma:dim=2 idim}).

The condition \ref{lemma:dim=2 (A)} admits $\pdim\M(s)\=<2$ and admits $\idim\M(s)\=<2$,
condition \ref{lemma:dim=2 (B)} admits $\idim\M(s)\>=2$,
and condition \ref{lemma:dim=2 (C)} admits $\pdim\M(s)\>=2$,
then the sufficiency holds.

Necessity (``only if'' part):
if one of $\ell(F_{\mathrm{ld}})\>= 3$ and $\ell(F_{\mathrm{rd}})\>= 3$ holds,
then consider the case of $\ell(F_{\mathrm{ld}})\>= 3$,
we can write $F_{\mathrm{ld}}$ is of the form $F_{\mathrm{ld}}=x_1x_2x_3\ldots x_n$ ($n\>=3$).
In this case, the 1-syzygy $\Omega_1(\M(\sfs))$ can be written as
(\ref{formula:lemma:dim=2 1-syzygy}) such that $L_1$ is non-zero,
and the 2-syzygy $\Omega_2(\M(\sfs))$ can be written as
(\ref{formula:lemma:dim=2 2-syzygy}) such that $L_2$ is non-zero.
Furthermore, $L_1$ and $L_2$ are not projective.
It follows that $\Omega_3(\M(\sfs))$ has a non-zero direct summand $L_3$,
which is given by the vertex $\t(x_3)$ of the forbidden path $F_{\mathrm{ld}}$,
to be more precise, $\top L_3 = S(\t(x_3)) \ne 0$.
Thus, we have $\pdim \M(\s)\>= 3$, a contradiction.
Then $\ell(F_{\mathrm{ld}})\=<2$.
We can prove the lengths of other three forbidden paths respect to $\sfs$ are
less than or equal to $2$, i.e., we obtain \ref{lemma:dim=2 (A)}.
On the other hand:
if the lengths of all left maximal forbidden paths respect to $\sfs$ are less than or equal to $1$,
then $\idim\M(s)\=< 1$ holds by using (\ref{formula:lemma:dim=2 idim}),
we obtain a contradiction. Then \ref{lemma:dim=2 (B)} holds.
One can check that \ref{lemma:dim=2 (C)} holds in a dual way and we complete the proof of this lemma.
\end{proof}

Notice that the condition \ref{lemma:dim=2 (A)} in Lemma \ref{lemma:dim=2} admits
$\pdim\M(\sfs)\=< 2$ and $\idim\M(\sfs)\=< 2$.
Thus, a direct corollary of Lemma \ref{lemma:dim=2} is the following result.

\begin{corollary}
Keep the notations from Lemma \ref{lemma:dim=2}.
\begin{itemize}
  \item[\rm(1)] Condition \ref{lemma:dim=2 (B)} given in Lemma \ref{lemma:dim=2} holds if and only if $\idim\M(\sfs)\>=2$;
  \item[\rm(2)] Condition \ref{lemma:dim=2 (C)} given in Lemma \ref{lemma:dim=2} holds if and only if $\pdim\M(\sfs)\>=2$.
\end{itemize}
\end{corollary}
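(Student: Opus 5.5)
The plan is to read both parts off the two formulas (\ref{formula:lemma:dim=2 pdim}) and (\ref{formula:lemma:dim=2 idim}) established in the proof of Lemma \ref{lemma:dim=2}:
\[
\pdim\M(\sfs)=\max\{\ell(F_{\mathrm{ld}}),\ell(F_{\mathrm{rd}})\},\qquad
\idim\M(\sfs)=\max\{\ell(F_{\mathrm{lu}}),\ell(F_{\mathrm{ru}})\}.
\]
Once these hold for an arbitrary string $\sfs$, and not only under condition \ref{lemma:dim=2 (A)}, each part is a direct comparison. For (2), $\pdim\M(\sfs)\>=2$ holds if and only if one of the right maximal forbidden paths $F_{\mathrm{ld}}$, $F_{\mathrm{rd}}$ has length $\>=2$. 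For (1), $\idim\M(\sfs)\>=2$ holds if and only if one of the left maximal forbidden paths $F_{\mathrm{lu}}$, $F_{\mathrm{ru}}$ has length $\>=2$.

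In the setting of Lemma \ref{lemma:dim=2}, where condition \ref{lemma:dim=2 (A)} is in force, every forbidden path with respect to $\sfs$ has length $\=<2$. So ``length $\>=2$'' is the same as ``length $=2$'', which is exactly condition \ref{lemma:dim=2 (B)}, respectively \ref{lemma:dim=2 (C)}. One bookkeeping point needs care: which of the two formulas pairs with which condition. I will match them as the lemma's proof does, namely \ref{lemma:dim=2 (B)} with $\idim$ and \ref{lemma:dim=2 (C)} with $\pdim$, and check that the left/right maximal labels on $F_{\mathrm{lu}},F_{\mathrm{ru}}$ versus $F_{\mathrm{ld}},F_{\mathrm{rd}}$ fit the wording of the corollary.

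The main obstacle is justifying the two formulas in general rather than only when all lengths are $\=<2$. I would argue by induction along the forbidden paths, using the syzygy description \cite[Lemma 3.4]{ZhangLiu2024} as in the proof. Write $F_{\mathrm{ld}}=x_1\cdots x_n$. Then the $k$-th syzygy $\Omega_k(\M(\sfs))$ has an indecomposable summand $L_k$ with top $S(\t(x_k))$, plus projective summands. Because $F_{\mathrm{ld}}$ is a right maximal forbidden path, $L_k$ is non-projective for $k<n$ and $L_n$ is projective, and the same holds on the $F_{\mathrm{rd}}$ side. Hence $\pdim\M(\sfs)$ is the larger of the two lengths. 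The injective statement follows dually, using cosyzygies and the left maximal paths $F_{\mathrm{lu}}$ and $F_{\mathrm{ru}}$.

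If I prefer not to redo this general argument, I can restrict to the hypotheses of Lemma \ref{lemma:dim=2}. There the formulas were already proved, and the corollary follows immediately.
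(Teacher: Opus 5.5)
Your proposal is correct and is essentially the paper's argument. The corollary is read off from (\ref{formula:lemma:dim=2 pdim}) and (\ref{formula:lemma:dim=2 idim}) in the setting of Lemma~\ref{lemma:dim=2}, where condition~\ref{lemma:dim=2 (A)} turns ``length $\geqslant 2$'' into ``length $=2$''; condition \ref{lemma:dim=2 (B)} is paired with $\idim$ through the left maximal paths $F_{\mathrm{lu}},F_{\mathrm{ru}}$, and condition \ref{lemma:dim=2 (C)} with $\pdim$ through the right maximal paths $F_{\mathrm{ld}},F_{\mathrm{rd}}$. Your optional general induction is not needed. If you do write it, note that internal valleys of $\sfs$ contribute projective summands to $\Omega_1(\M(\sfs))$, so for arbitrary $\sfs$ your claimed formula for $\pdim$ can be off at the value $1$ (both end paths trivial but $\pdim\M(\sfs)=1$); this does not affect the comparison with $2$.
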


\subsection{CM-Auslander algebras of gentle algebras}

A \defines{Gorenstein-projective module} ($=$ G-projective module for short) $G$ over an algebra $A$,
a generalized projective module introduced by Auslander and Bridger \cite{AB1969},
is an $A$-module with a $\Hom_A(-,A)$-exact complete projective pre-resolution
\[ \pmb{P}_{\bullet} := ~
\xymatrix{ \cdots \ar[r]^{p_{-2}} & P_{-1} \ar[r]^{p_{-1}} &
  P_0 \ar[r]^{p_0} & P_1 \ar[r]^{p_1} & P_2 \ar[r]^{p_2} & \cdots,
}\]
i.e., there is an exact sequence as above such that
$G\cong \image(p_{-1})$ and $\Hom_A(\pmb{P}_{\bullet}, A)$ is exact.
The dual of Gorenstein-projective module is Gorenstein-injective module
($=$ G-injective module for short), see \cite[Definition 2.1]{EJ1995}.
Let $\Gproj(A)$ be the set of all G-projective $A$-modules (up to isomorphism)
and $\ind(\Gproj(A))$ be the set of all indecomposable G-projective $A$-modules (up to isomorphism).
The \defines{Cohen--Macaulay--Auslander algebra} ($=$ CM--Auslander algebra) of $A$
is defined as the following endomorphism algebra
\[
\bigg(\End_A \bigg(
  \bigoplus_{G \in \ind(\Gproj(A))
  }
  G
\bigg)\bigg)^{\op},
\]
see \cite[Section 6.2]{B2005}. Here, for a ring/algebra $\itLamb$,
``$\itLamb^{\op}$'' is its \defines{opposite ring/algebra}.

In \cite{GR2005}, Gei\ss and Reiten have proved that all gentle algebras are Gorenstein.
Motivated by the classical definition of the Auslander algebra, Beligiannis introduced
the Cohen--Macaulay Auslander algebra (= CM--Auslander algebra) of an algebra,
which is defined as the endomorphism algebra of the direct sum of all Gorenstein-projective modules which are pairwise not isomorphic~\cite{B2005,B2011}.
Thus, it is naturally and important to characterize the relations and homological properties
between algebras and their CM--Auslander algebras.
In \cite{CL2017, CL2019}, Chen and Lu described the
CM--Auslander algebras of gentle and skew-gentle algebras,
building on the works of Kalck in \cite{Kal2015}.
Now, we recall how to compute the CM--Auslander algebra of a gentle algebra.

Let $A$ be a gentle algebra and $(\Q,\I)$ its bound quiver.
Let $(\Q^{\CMA},\I^{\CMA})$ be the bound quiver obtained by the following steps.
\begin{enumerate}[label=\text{\rm Step~\arabic*.}]
  \item there is a bijection $\mathfrak{v}: \Q^{\CMA}_0 \mathop{\rightarrow}\limits^{1-1} \Q_0 \cup (\ind(\Gproj(A))\backslash\ind(\proj(A)))$
    \footnote{Here, $\ind(\proj(A))$ is the set of all indecomposable projective $A$-modules (up to isomorphism),
    and each element $v=\mathfrak{v}^{-1}(G)$ in $\Q_0^{\CMA}$
    is said to be the vertex corresponding to the indecomposable non-projective G-projective module $G$.}.
  \item For any forbidden cycle $\C=c_1\cdots c_n$ ($\s(c_i)=v_i$),
    define $c_i^-$ and $c_i^+$ the two arrows obtained by $c_i$ as follows:
    \begin{itemize}
      \item $c_i^-$ is an arrow from $\s(c_i)$ to the G-projective module $c_i A \in \ind(\Gproj(A))$,
      \item $c_i^+$ is an arrow from the G-projective module $c_i A \in \ind(\Gproj(A))$ to $\t(c_i)$.
    \end{itemize}
    Here, Kalck proved that $c_i A$ is an indecomposable G-projective module,
    see \cite[Theorem 2.5 (a)]{Kal2015}. Define
    \footnote{This definition induces two functions $\s, \t: \Q^{\CMA}_1 \to \Q^{\CMA}_0$
    such that $\Q^{\CMA}=(\Q^{\CMA}_1$, $\Q^{\CMA}_0$, $\s^{\CMA}$, $\t^{\CMA})$ is a quiver.}
    \begin{align*}
        \Q_1^{\CMA}
      & := \Q_1\backslash\{ c\in\Q_1 \mid c \text{ is an arrow on a forbidden cycle} \} \\
      & \cup \{c^- \mid c \text{ is an arrow on a forbidden cycle} \} \\
      & \cup \{c^+ \mid c \text{ is an arrow on a forbidden cycle} \}.
    \end{align*}

  \item $\I^{\CMA}$ is the ideal of $\kk\Q^{\CMA}$ generated by the following two types of paths:
    \begin{itemize}
      \item the path $ab$ in $\I$ such that $a$ and $b$ are three arrows not on any forbidden cycle;
      \item the path $ac^{-}$ such that $ac\in\I$;
      \item the path $c^{+}b$ such that $cb\in\I$.
    \end{itemize}
\end{enumerate}

\begin{theorem}[{Chen--Lu \cite[Theorem 3.5]{CL2019}}] \label{thm:CL}
Let $A=\kk\Q/\I$ be a gentle algebra and $A^{\CMA}$ be its CM--Auslander algebra.
Then $A^{\CMA}$ is also gentle and $A^{\CMA} \cong \kk\Q^{\CMA}/\I^{\CMA}$.
\end{theorem}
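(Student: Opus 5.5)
The plan is to compute the CM--Auslander algebra directly as an endomorphism algebra and match it with the bound quiver $(\Q^{\CMA},\I^{\CMA})$. By Kalck's result \cite[Theorem 2.5]{Kal2015}, the indecomposable non-projective G-projective $A$-modules are exactly the modules $cA$, where $c$ runs over the arrows lying on forbidden cycles, and these are pairwise non-isomorphic. Hence
\[ A^{\CMA}=\End_A\Big(A\oplus\bigoplus_{c}cA\Big)^{\op}. \]
Its indecomposable projectives are in bijection with $\Q_0\cup\{cA\}$, which gives Step~1.

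Each $cA\subseteq e_{\s(c)}A$ is the string module spanned by the nonzero paths starting with $c$. Let $c'$ be the arrow following $c$ on the forbidden cycle, so $cc'\in\I$. Gentleness then gives $cA\cong e_{\t(c)}A/c'A$. Left multiplication by $c$, viewed as the map $e_{\t(c)}A\to e_{\s(c)}A$, therefore factors as
\[ e_{\t(c)}A\twoheadrightarrow cA\hookrightarrow e_{\s(c)}A . \]
These two factors are the candidates for $c^{+}$ and $c^{-}$ (with the orientation matching the $\op$ convention). For arrows $a$ not on any forbidden cycle, left multiplication by $a$ is kept as an arrow.

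The next step is to show these maps generate the radical of the category $\mathrm{add}(A\oplus\bigoplus cA)$ and are irreducible there. For Hom spaces between projectives, $\Hom(e_uA,e_vA)=e_vAe_u$ is spanned by paths, and each path factors through the arrows above, using $c=c^-c^+$. For maps into or out of $cA$, I would use the basis of homomorphisms between string modules (Crawley-Boevey's description). This will show that any map $e_uA\to cA$ factors through $e_{\t(c)}A\twoheadrightarrow cA$, since $e_{\t(c)}A$ is the projective cover. It will also show that any map $cA\to M$ factors through $cA\hookrightarrow e_{\s(c)}A$, since $cA$ has the required submodule shape and is a left $\mathrm{add}(A)$-approximation via its injective envelope inside the Gorenstein projective category. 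I would also check irreducibility: $cA$ is not projective, so neither factor is split, and the factors do not factor further.

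The relations then come out directly. If $ab\in\I$ with $a,b$ off forbidden cycles, the relation is inherited. For $ac\in\I$, the composite of $a$ with $cA\hookrightarrow e_{\s(c)}A$ is multiplication by $ac=0$, giving $ac^{-}\in\I^{\CMA}$; dually $cb\in\I$ gives $c^{+}b$. In particular $c^{+}c'^{-}$ is the composite $c'A\hookrightarrow e_{\t(c)}A\twoheadrightarrow cA$, which is zero because its image is the kernel $c'A$. Gentleness of $(\Q^{\CMA},\I^{\CMA})$ then follows by checking (G1)--(G4) locally: at an old vertex the arrows $c^{\pm}$ replace $c$ with the same relation pattern, and each new vertex $cA$ has exactly one incoming and one outgoing arrow with $c^-c^+\notin\I^{\CMA}$.

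The main obstacle is showing there are no further relations. My approach is a dimension count: for each pair of vertices, compare the number of nonzero paths in $\kk\Q^{\CMA}/\I^{\CMA}$ with $\dim\Hom$ computed from the string combinatorics. Both should reduce to counting nonzero paths of $A$, together with their truncations starting at $c^{+}$ or ending at $c^{-}$. Equality of dimensions, combined with surjectivity of the natural map $\kk\Q^{\CMA}/\I^{\CMA}\to A^{\CMA}$, gives the isomorphism.
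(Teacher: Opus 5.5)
The paper gives no proof of this statement; it quotes it from Chen--Lu. So your argument has to stand on its own.

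Most of the plan checks out:
\begin{itemize}
\item the presentation $cA\cong e_{\t(c)}A/c'A$;
\item the splitting $e_{\t(c)}A\twoheadrightarrow cA\hookrightarrow e_{\s(c)}A$ of left multiplication by $c$;
\item the vanishing of $c^{+}c'^{-}$;
\item the local gentleness check;
\item the overall strategy of surjection plus dimension count.
\end{itemize}
A side remark: a relation $ac^{-}$ with $a$ off the forbidden cycles never actually occurs. By gentleness, the only arrow $a$ with $ac\in\I$ is the predecessor of $c$ on its cycle, so the new relations are exactly the $c^{+}c'^{-}$.

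\textbf{The gap.} It sits in the surjectivity step, at the claim that every radical map $cA\to M$ with $M\in\mathrm{add}(A\oplus\bigoplus_d dA)$ factors through $cA\hookrightarrow e_{\s(c)}A$. You justify this by saying the inclusion is a left $\mathrm{add}(A)$-approximation (the injective envelope of $cA$ in the Frobenius category $\Gproj(A)$). That gives the factorization only when $M$ is projective.

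For $M=dA$, a map $cA\to dA$ factors through $e_{\s(c)}A$ if and only if it factors through some projective, i.e.\ vanishes in $\underline{\Gproj}(A)$. So you are tacitly assuming that the stable category has no nonzero radical morphisms between indecomposables. This is false for Gorenstein algebras in general: over $\kk[x]/(x^3)$ the socle inclusion $\kk[x]/(x)\to\kk[x]/(x^2)$ does not factor through a projective. For gentle algebras it is true, and it is what Kalck's description of $\underline{\Gproj}(A)$ as a product of orbit categories $\Dcat^b(\kk)/[n_i]$ encodes. Neither the approximation property nor the ``submodule shape'' remark supplies this input.

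\textbf{The repair.} It is short, and it also produces the numbers your dimension count needs.
\begin{itemize}
\item From $cA\cong e_{\t(c)}A/c'A$ you get $\Hom_A(cA,M)\cong\{m\in Me_{\t(c)}\mid mc'=0\}$ via $f\mapsto f(c)$.
\item For $M=e_vA$ or $M=dA$, expand $m$ in nonzero paths. Distinct paths stay distinct or become zero after right multiplication by $c'$, so you can work path by path.
\item Since $\I$ is generated by paths of length two, a nonzero path $p$ with $pc'=0$ must end with the unique arrow $\ell$ having $\ell c'\in\I$, namely $c$.
\item Hence every map $cA\to e_vA$ is a combination of maps $cx\mapsto pcx$.
\item Every map $cA\to dA$ is $\lambda\,\mathrm{id}$ (possible only if $d=c$, coming from the path $p=c$) plus a combination of maps $cx\mapsto dqcx$, where $dqc$ is a nonzero path of length $\geqslant 2$.
\item Apart from the identity, all of these visibly factor as $cA\hookrightarrow e_{\s(c)}A\to M$.
\end{itemize}
The same computation gives the Hom dimensions:
\begin{itemize}
\item $\dim\Hom_A(cA,e_vA)=\#\{\text{nonzero paths from } v \text{ ending with the arrow } c\}$;
\item $\dim\Hom_A(cA,dA)=\delta_{cd}+\#\{\text{nonzero paths of length}\geqslant 2 \text{ starting with } d \text{ and ending with } c\}$.
\end{itemize}
These match the nonzero paths between the corresponding vertices of $(\Q^{\CMA},\I^{\CMA})$, because every path passing through a new vertex $v_c$ traverses $c^{-}c^{+}$. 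With this lemma in place you need neither Crawley-Boevey's basis nor a separate irreducibility check.
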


The vertices in $\Q^{\CMA}_0$ corresponding to indecomposable non-projective G-projective module $c_{ij}A$
is written as $v_{ij}$. Here, $c_{ij}$ is an arrow on a forbidden cycle $\C_i=c_{i1}\cdots c_{in}$ of $(\Q,\I)$
($\C_i$ represents the $i$-th forbidden cycle of $\Q$).
In particular, if $(\Q,\I)$ has only one forbidden cycle $\C=c_1\cdots c_n$,
then we denote $\C_1 = c_{11}\cdots c_{1n}$ by this forbidden cycle.

\begin{remark} \rm
The CM--Auslander algebra of a skew-gentle algebra is described in \cite{CL2017},
and the CM--Auslander algebra of a string algebra is described in \cite{LZhang2023CM-Auslander}.
\end{remark}

\begin{example}\label{examp:CMA of gentle} \rm
Consider the gentle algebra $A=\kk\Q/\I$ given in Example \ref{examp:gentle},
then the bound quiver of its CM--Auslander algebra is shown in \Pic \ref{fig:CMA}.
\def\lengthone{0.1}
\def\lengthtwo{0.2}
\begin{figure}[htbp]
  \centering
\begin{tikzpicture}[scale=0.5]
\foreach \x in {0,60,120,180,240,300}
\draw[->][rotate around = {10+\x:(0,0)}][line width=1pt] (2,0) arc(0:40:2);
\foreach \x in {0,60,120,180,240,300}
\draw[->][rotate around = {5+\x:(0,0)}][line width=1pt] (6,0) arc(0:50:6);
\draw (2,0) node{$1$} (-1*1,1.73*1) node{$2$} (-1*1,-1.73*1) node{$3$};
\draw (4,0) node{$4$} (-1*2,1.73*2) node{$5$} (-1*2,-1.73*2) node{$6$};
\draw (6,0) node{$7$} (-1*3,1.73*3) node{$8$} (-1*3,-1.73*3) node{$9$};
\foreach \x in {0,120,240}
\draw[->][rotate around = {\x:(0,0)}][line width=1pt] (3.7,0) -- (2.3,0);
\foreach \x in {0,120,240}
\draw[->][rotate around = {\x:(0,0)}][line width=1pt] (5.7,0) -- (4.3,0);
\draw[rotate around = { 60-30:(0,0)}] (2.4+\lengthone,0) node{$a_{12}^{-}$};
\draw[rotate around = {180-30:(0,0)}] (2.4+\lengthone,0) node{$a_{23}^{-}$};
\draw[rotate around = {300-30:(0,0)}] (2.4+\lengthone,0) node{$a_{31}^{-}$};
\draw[rotate around = { 60-30:(0,0)}] (6.4+\lengthtwo,0) node{$a_{78}^{-}$};
\draw[rotate around = {180-30:(0,0)}] (6.4+\lengthtwo,0) node{$a_{89}^{-}$};
\draw[rotate around = {300-30:(0,0)}] (6.4+\lengthtwo,0) node{$a_{97}^{-}$};
\draw[rotate around = { 60+30:(0,0)}] (2.4+\lengthone,0) node{$a_{12}^{+}$};
\draw[rotate around = {180+30:(0,0)}] (2.4+\lengthone,0) node{$a_{23}^{+}$};
\draw[rotate around = {300+30:(0,0)}] (2.4+\lengthone,0) node{$a_{31}^{+}$};
\draw[rotate around = { 60+30:(0,0)}] (6.4+\lengthtwo,0) node{$a_{78}^{+}$};
\draw[rotate around = {180+30:(0,0)}] (6.4+\lengthtwo,0) node{$a_{89}^{+}$};
\draw[rotate around = {300+30:(0,0)}] (6.4+\lengthtwo,0) node{$a_{97}^{+}$};
\draw[rotate around = {  0:(0,0)}] (3,0.5) node{$a_{41}$};
\draw[rotate around = {120:(0,0)}] (3,0.5) node{$a_{52}$};
\draw[rotate around = {240:(0,0)}] (3,0.5) node{$a_{63}$};
\draw[rotate around = {  0:(0,0)}] (5,0.5) node{$a_{74}$};
\draw[rotate around = {120:(0,0)}] (5,0.5) node{$a_{85}$};
\draw[rotate around = {240:(0,0)}] (5,0.5) node{$a_{96}$};
\foreach \x in {0,120,240}
\draw[line width=1pt][dotted][red]
[rotate around = {\x:(0,0)}] (1.85,-0.5) arc(-90:-270:0.5);
\foreach \x in {0,120,240}
\draw[line width=1pt][dotted][blue]
[rotate around = {\x:(0,0)}] (5.95,-1) arc(-90:90:1);
\foreach \x in {0,120,240}
\draw[line width=1pt][dashed][violet]
[rotate around = {\x:(0,0)}] (3,0) arc(-180:0:1);
\draw[rotate around = { 60:(0,0)}] (2,0) node{\footnotesize$G_{12}$} (-1*1,1.73*1);
\draw[rotate around = {180:(0,0)}] (2,0) node{\footnotesize$G_{23}$} (-1*1,1.73*1);
\draw[rotate around = {300:(0,0)}] (2,0) node{\footnotesize$G_{31}$} (-1*1,1.73*1);
\draw[rotate around = { 60:(0,0)}] (6,0) node{\footnotesize$G_{78}$} (-1*1,1.73*1);
\draw[rotate around = {180:(0,0)}] (6,0) node{\footnotesize$G_{89}$} (-1*1,1.73*1);
\draw[rotate around = {300:(0,0)}] (6,0) node{\footnotesize$G_{97}$} (-1*1,1.73*1);
\end{tikzpicture}
  \caption{\texttt{The CM--Auslander algebra of the gentle algebra given in Example \ref{examp:gentle}}}
  \label{fig:CMA}
\end{figure}
\end{example}

For any gentle algebra $A=\kk\Q/\I$ with forbidden cycles
$\C_1=c_{i1}\cdots c_{in_1}$, $\ldots$, $\C_t = c_{t1}\cdots c_{tn_t}$,
we define $e_{ij}$ the idempotent of $A^{\CMA}$ corresponding to the vertex $v_{ij}$
($=\mathfrak{v}^{-1}(c_{ij}A)$), and define
\begin{align}\label{formula:idempotent}
  \e_{\nota} := 1-\sum_{i=1}^t\sum_{j=1}^{n_i}e_{ij} = \sum_{v\in\Q_0} \e_v
\end{align}
in this paper (if $(\Q,\I)$ contains no forbidden cycle, then $\e_{\nota}$ is $1$).

\begin{lemma} \label{lemm:recollement CMA}
Let $A=\kk\Q/\I$ be a gentle algebra. Then there is a recollement
\begin{align}
\calR(C,\e_{\nota}):= \
\xymatrix@C=2cm{
  \modcat(\bC) \ar[r]^{\emb_{\nota}}
& \modcat(C) \ar[r]^{\res_{\e_{\nota}}}
  \ar@/_1.5pc/[l]_{-\otimes_{C} \bC}
  \ar@/^1.5pc/[l]^{\Hom_{C}(\bC, -)}
& \modcat(A).
  \ar@/_1.5pc/[l]_{-\otimes_A \e_{\nota}C}
  \ar@/^1.5pc/[l]^{\Hom_A(C\e_{\nota},-)}
}
\end{align}
of $\modcat(\bC)$, $\modcat(C)$, and $\modcat(A)$,
where $C:=A^{\CMA}$ is the CM--Auslander algebra of $A$,
and $\bC:= C/C\e_{\star} C$ is $C$ by modulo $\e_{\star}$.
\end{lemma}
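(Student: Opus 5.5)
The plan is to apply Psaroudakis' general recollement $\calR(\Lambda,e)$ from (\ref{recollement}) with $\Lambda=C=A^{\CMA}$ and $e=\e_{\nota}$. By \cite[Example 2.7]{Psa2014}, every idempotent $e$ of a finite-dimensional algebra $\Lambda$ gives a recollement of $\modcat(\Lambda/\Lambda e\Lambda)$, $\modcat(\Lambda)$ and $\modcat(e\Lambda e)$. Its six functors are the embedding, $-\otimes_\Lambda \Lambda/\Lambda e\Lambda$, $\Hom_\Lambda(\Lambda/\Lambda e\Lambda,-)$, the restriction $(-)e$, $-\otimes_{e\Lambda e}e\Lambda$ and $\Hom_{e\Lambda e}(\Lambda e,-)$. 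Specialising gives exactly the diagram in the statement, except that the right-hand term is $\modcat(\e_{\nota}C\e_{\nota})$. So the only real content is to identify $\e_{\nota}C\e_{\nota}\cong A$ as algebras; after that, $\modcat(\e_{\nota}C\e_{\nota})$ is replaced by $\modcat(A)$ through this isomorphism.

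I see two routes to this identification. The conceptual route uses the definition of the CM--Auslander algebra. Write $C=\End_A(A\oplus G)^{\op}$, where $G$ is the direct sum of the indecomposable non-projective G-projective modules. The idempotent $\e_{\nota}=\sum_{v\in\Q_0}\e_v$ is then the projection onto the summand $A=\bigoplus_{v}e_vA$. Hence $\e_{\nota}C\e_{\nota}\cong\End_A(A_A)^{\op}\cong A$, because $\End_A(A_A)\cong A^{\op}$ via left multiplication. Care with the $\op$ conventions and with right modules is needed, but this is standard.

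The combinatorial route is a check against Theorem \ref{thm:CL}. Here $\e_{\nota}C\e_{\nota}$ has as basis the paths of $(\Q^{\CMA},\I^{\CMA})$ that are nonzero and start and end in $\Q_0$. I would build a map $\kk\Q\to\e_{\nota}C\e_{\nota}$ that sends an arrow $c$ on a forbidden cycle to $c^-c^+$ and fixes all other arrows. It then has to be shown that the relations match. A relation $ac\in\I$ becomes $ac^-c^+$, which vanishes since $ac^-\in\I^{\CMA}$; similarly $cb\in\I$ goes to zero via $c^+b$. Conversely, the path $c^-c^+$ is not a relation, and the arrows $c^\pm$ only occur in this consecutive form in paths between vertices of $\Q_0$. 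So the map is surjective with kernel $\I$.

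The main obstacle, mild as it is, is making the bookkeeping of this isomorphism rigorous. In the conceptual route this means the opposite-algebra conventions; in the combinatorial route it means checking that no path through a vertex $v_{ij}$ survives except via $c^-c^+$. Since $v_{ij}$ has exactly one incoming arrow $c^-$ and one outgoing arrow $c^+$, this is immediate. I would present the conceptual argument and mention the quiver description as a consistency check.
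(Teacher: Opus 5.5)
Your overall plan is the paper's: apply Psaroudakis' Example 2.7 to $C$ and $\e_{\nota}$. Everything then reduces to the algebra isomorphism $\e_{\nota}C\e_{\nota}\cong A$, and the paper obtains it from the quiver description of Theorem \ref{thm:CL}, i.e.\ by your second route. The trouble is that the route you say you would present, the conceptual one, contains a concrete error.

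With right modules and maps composed in the usual order (the paper writes $f=ph$ for $h$ followed by $p$), left multiplication $a\mapsto\lambda_a$ satisfies $\lambda_a\lambda_b=\lambda_{ab}$. So it is a ring isomorphism $A\cong\End_A(A_A)$, not an anti-isomorphism; it is $\End_A({}_AA)$, via right multiplication, that is isomorphic to $A^{\op}$. Taking literally $C=\End_A(A\oplus G)^{\op}$, your computation therefore gives $\e_{\nota}C\e_{\nota}\cong\End_A(A_A)^{\op}\cong A^{\op}$. The two $\op$'s do not cancel, and the right-hand term would be left $A$-modules rather than $\modcat(A)$. This reflects a mismatch between the paper's stated definition and Theorem \ref{thm:CL}, which forces the corner to be $A$; the paper sidesteps it by working with $C=\kk\Q^{\CMA}/\I^{\CMA}$ throughout. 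Your conceptual argument becomes correct once the convention is fixed to agree with Theorem \ref{thm:CL} (for right modules, $C=\End_A(A\oplus G)$ without $\op$). Even then, knowing that $\sum_{v\in\Q_0}\e_v$, which is how the statement defines $\e_{\nota}$, is the projection onto $A_A$ requires Chen--Lu's bijection $\mathfrak{v}$. So this route does not actually bypass Theorem \ref{thm:CL}; its only merit is explaining why the corner is $A$.

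Your combinatorial route is correct, and it is more precise than the paper's displayed computation. That computation describes $\e_{\nota}C\e_{\nota}$ as spanned by paths through vertices not on any forbidden cycle. This is not literally accurate, since paths between vertices of $\Q_0$ can run along a forbidden cycle; the substitution $c\mapsto c^-c^+$ is exactly what makes the identification work. Two points deserve to be written out.
\begin{itemize}
\item For consecutive arrows $c_j,c_{j+1}$ of a forbidden cycle with $c_jc_{j+1}\in\I$, the image $c_j^-c_j^+c_{j+1}^-c_{j+1}^+$ vanishes because $c_j^+c_{j+1}^-\in\I^{\CMA}$. This is the relation at the old vertex $\t(c_j)$, and it is how the generators of $\I^{\CMA}$ must be read when both arrows lie on the cycle.
\item For injectivity, use that both algebras are monomial, so nonzero paths form bases. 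Then $p\mapsto\hat p$ (replace each such $c$ by $c^-c^+$) is a bijection from paths of $\Q$ containing no relation of $\I$ onto paths of $\Q^{\CMA}$ between vertices of $\Q_0$ containing no relation of $\I^{\CMA}$. This uses $c^-c^+\notin\I^{\CMA}$ and the fact that each vertex $v_{ij}$ has exactly one incoming and one outgoing arrow.
\end{itemize}
With these points, this route is a complete proof and should be the one you present, not a consistency check.
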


\begin{proof}
By Theorem \ref{thm:CL}, we have
\begin{align*}
C \cong \kk\Q^{\CMA}_0 + \sum_{u\>= 1}\kk\Q^{\CMA}_u
  = \sum_{i=1}^t\sum_{j=1}^{n_i}e_{ij} + \sum_{v\in\Q_0} \kk\e_v + \sum_{u\>= 1}\kk\Q^{\CMA}_u.
\end{align*}
It follows
\begin{align*}
\e_{\nota}C\e_{\nota}
& \cong \sum_{v\in\Q_0} \kk\e_v + \e_{\nota} \bigg(\sum_{u\>= 1}\kk\Q^{\CMA}_u \bigg)\e_{\nota}\\
& = \sum_{v\in\Q_0} \kk\e_v + \mathrm{span}_{\kk}
    \{\text{paths through $e_v$} \mid v (\in\Q_0) \text{ is not on any } \C_i\} \\
& = \sum_{v\in\Q_0} \kk\e_v + \sum_{u\>= 1}\kk\Q_u = A.
\end{align*}
Then the recollement (\ref{recollement}) admits the existence of $\calR(C,\e_{\nota})$ by \cite[Example 2.7]{Psa2014}.
\end{proof}

\begin{remark} \rm
The algebra $\bC$ given in Lemma \ref{lemm:recollement CMA} is semi-simple,
whose quiver is obtained by all vertices of the form $\mathfrak{v}^{-1}(cA)$.
Here, all $cA$ are indecomposable non-projective G-projective $A$-modules.
\end{remark}

\begin{proposition} \label{prop:recoll CMA}
Let $A=\kk\Q/\I$ be a gentle algebra and $C$ its CM--Auslander algebra.
Keep the notations from Lemma \ref{lemm:recollement CMA},
then for any $G \in \ind(\Gproj(A))\backslash\ind(\proj(A))$, the following statements hold.
\begin{itemize}
  \item[\rm(1)]
    $\res_{\e_{\nota}}(P(v)_C)$ is G-projective, where $v=\mathfrak{v}^{-1}(G)$,
    and $P(v)_C:=e_vC$ is the indecomposable projective $C$-module corresponding to $v$.
  \item[\rm(2)]
    $G \otimes_A \e_{\nota}C$ is not projective.
    Furthermore, $G \otimes_A \e_{\nota}C$ is not G-projective,
    but $\res_{\e_{\nota}}(G \otimes_A \e_{\nota}C)$ is G-projective.
\end{itemize}
\end{proposition}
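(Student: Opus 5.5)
The plan is to compute everything explicitly from the bound quiver $(\Q^{\CMA},\I^{\CMA})$ of Theorem \ref{thm:CL}, using Kalck's description of $\ind(\Gproj(A))\backslash\ind(\proj(A))$ as the modules $cA$ with $c$ on a forbidden cycle \cite{Kal2015}. Fix $G=c_jA$, where $c_j$ is an arrow of a forbidden cycle $\C=c_1\cdots c_n$, and let $v=\mathfrak{v}^{-1}(G)$. Throughout, the restriction $\res_{\e_{\nota}}(M)=M\e_{\nota}$ is identified with an $A$-module through the isomorphism $\e_{\nota}C\e_{\nota}\cong A$ of Lemma \ref{lemm:recollement CMA}.

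\textbf{Part (1).} The projective $e_vC$ has a basis of the paths in $(\Q^{\CMA},\I^{\CMA})$ starting at $v$. The only arrow leaving $v$ is $c_j^+$, whose target is $\t(c_j)$.
\begin{itemize}
  \item A path $c_j^+p$ is nonzero exactly when $c_jp\notin\I$, reading $p$ in $\Q$ with each $c^-c^+$ replaced by $c$.
  \item Multiplying by $\e_{\nota}$ keeps precisely the paths ending at vertices of $\Q_0$. This removes only $e_v$ itself and paths of the form $c_j^+\cdots c^-$.
  \item The remaining span is spanned by $c_j^+p$ with $p$ ending in $\Q_0$, and it is isomorphic as an $A$-module to $c_jA$ via $c_j^+p\mapsto c_jp$.
\end{itemize}
I will check that this map respects the right action of arrows of $A$: arrows of forbidden cycles act as $c^-c^+$, and the relations $ac^-$, $c^+b$ in $\I^{\CMA}$ match the relations $ac$, $cb$ in $\I$. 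Hence $\res_{\e_{\nota}}(e_vC)\cong c_jA=G$, which is G-projective.

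\textbf{Part (2).} Since $G$ is a right ideal of $A$, I will use the projective presentation $0\to c_{j+1}A\to e_{\t(c_j)}A\to c_jA\to 0$. This sequence is exact because $\C$ is a forbidden cycle, so $\ker(c_j\cdot-)=c_{j+1}A$ by gentleness.

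Applying the right exact functor $-\otimes_A\e_{\nota}C$, and using $e_uA\otimes_A\e_{\nota}C\cong e_uC$ for $u\in\Q_0$, I obtain
\[ G\otimes_A\e_{\nota}C\cong \operatorname{coker}\bigl(c_{j+1}A\otimes_A \e_{\nota}C\to e_{\t(c_j)}C\bigr). \]
The image of this map is the right ideal of $C$ generated by $c_{j+1}^-c_{j+1}^+$. Therefore $G\otimes_A\e_{\nota}C\cong e_{\t(c_j)}C/c_{j+1}^-c_{j+1}^+C$.

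This module is not projective: its top is the simple $S(\t(c_j))$, but it is a proper quotient of $e_{\t(c_j)}C$.

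To show it is not G-projective, I will use Part (1) of the Theorem in the introduction, which says $C$ is gentle, and apply Kalck's criterion again, now to $C$. The key observation is that $C$ has no forbidden cycles: each cycle $c_1\cdots c_n$ is replaced by $c_1^-c_1^+\cdots c_n^-c_n^+$, and the compositions $c_i^-c_i^+$ are not in $\I^{\CMA}$. Consequently $C$ is Gorenstein with $\Gproj(C)=\proj(C)$, and every non-projective $C$-module fails to be G-projective. This matches the expectation that a CM--Auslander algebra is CM-free.

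Finally, the restriction is
\[ \res_{\e_{\nota}}(G\otimes_A\e_{\nota}C)\cong G\otimes_A\e_{\nota}C\e_{\nota}\cong G\otimes_AA\cong G, \]
which is G-projective.

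The main obstacle is the precise combinatorial bookkeeping in Part (1). One must verify that the paths in $C$ through the new vertices $v_{ij}$ compose exactly as in $A$ once the pairs $c^-c^+$ are collapsed, including paths that run around the cycle several times. The exactness of the presentation of $c_jA$ also relies on this forbidden-cycle structure.
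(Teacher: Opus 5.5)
Your proof is correct. Part (1) and the identification $\res_{\e_{\nota}}(G\otimes_A\e_{\nota}C)\cong G$ match the paper: your map $c_j^+p\mapsto c_jp$ is exactly the paper's $\eta\sigma$.

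\textbf{Non-projectivity in (2).} Here your route genuinely differs from the paper's.
\begin{itemize}
\item The paper asserts $G\otimes_A\e_{\nota}C\cong c_j^-c_j^+C$. It then argues that if this module were projective, the surjection $e_{\t(c_j)}C\to c_j^-c_j^+C$, $q\mapsto c_j^-c_j^+q$, would be an isomorphism. That fails because the surjection kills $c_{j+1}^-$ (printed there as $c^{+}_{i\overline{j+1}}$).
\item You instead compute the tensor product from $0\to c_{j+1}A\to e_{\t(c_j)}A\to c_jA\to 0$ and right exactness. This gives $e_{\t(c_j)}C/c_{j+1}^-c_{j+1}^+C$.
\end{itemize}

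Your computation is the more accurate one. The module $c_j^-c_j^+C\cong e_{\t(c_j)}C/c_{j+1}^-C$ is only the image of the multiplication map $G\otimes_A\e_{\nota}C\to C$. The class of $c_{j+1}^-$ survives in the tensor product. Already for $A=\kk[x]/(x^2)$, the tensor product is $2$-dimensional while $x^-x^+C$ is $1$-dimensional. Both modules are proper quotients of $e_{\t(c_j)}C$ with simple top, so the paper's conclusion is unaffected. Still, your argument is the one that correctly identifies $G\otimes_A\e_{\nota}C$.

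\textbf{Non-G-projectivity.} The paper simply invokes $\gldim C<\infty$. You derive $\Gproj(C)=\proj(C)$ from Kalck's description by checking that $C$ has no forbidden cycles. For gentle algebras these are the same fact, and your version actually justifies it.

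\textbf{Two small fixes.}
\begin{itemize}
\item Gentleness of $C$ comes from Theorem \ref{thm:CL}, not from part (1) of the introductory theorem.
\item Your no-forbidden-cycle check should also cover oriented cycles of $\Q$ that use no arrow of a forbidden cycle. Their relations are unchanged in $\I^{\CMA}$. So if such a cycle were forbidden in $C$, it would already be forbidden in $A$, contradicting the choice of its arrows.
\end{itemize}
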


\begin{proof}
(1) Since $G$ is a non-projective indecomposable G-projective module,
then there is an arrow $\alpha=c_{ij}$ on some forbidden cycle $\C_i$ of $(\Q,\I)$
such that $G\cong \alpha A$, where $\ell(\C_i)=n_i$, see \cite[Theorem 2.5 (a)]{Kal2015}.
Thus, by Theorem \ref{thm:CL}, we obtain a correspondence
\begin{align*}
  \res_{\e_{\nota}}(P(v)_C) & = e_vC\e_{\nota} = \{e_vc\e_{\nota}\mid c\in C\} \cong \{\alpha^+\} \\
& \mathop{\rightarrow}^{\sigma} \{\alpha^+c\e_{\nota}\mid c\in C\} =: \alpha^+C\e_{\nota},
\end{align*}
which sends each $e_v\wp=\wp \in e_vC\e_{\nota}$ to $\alpha^+\wp'\in \alpha^+C\e_{\nota}$.
Here, $\wp$ is an arbitrary path in $(\Q^{\CMA}, \I^{\CMA})$
starting with $v\in\Q^{\CMA}_0$ and ending with a vertex in $\mathfrak{v}^{-1}(\Q_0)$.
Since $v$ is a vertex such that the number of arrows ending with $v$
and that of arrows starting with $v$ both are $1$,
i.e., $\t^{-1}(v)=\{\alpha^-\}$ and $\s^{-1}(v)=\{\alpha^+\}$ hold,
then we have $\wp=\alpha^+\wp'$ in $(\Q^{\CMA}, \I^{\CMA})$.
Thus, $\sigma:\wp\mapsto \alpha^+\wp'=\wp$ is a bijection.

Moreover, $\eta: \alpha^+C\e_{\nota}\to \alpha^-\alpha^+\cdot (e_{\t(\alpha)}\e_{\nota})\cdot C\e_{\nota}, ~
 \alpha^+\wp' \mapsto \alpha^-\alpha^+\wp'=\alpha\wp'$
is a bijection since $\t^{-1}(v)=\{\alpha^-\}$ contains only one arrow. Then, by
$\alpha^-\alpha^+\cdot (e_{\t(\alpha)}\e_{\nota})\cdot C\e_{\nota}
   = \alpha \e_{\nota}C\e_{\nota} = \alpha A$,
we have a bijection
\[ \eta\sigma:\res_{\e_{\nota}}(P(v)_C) \to \alpha A, \]
and one can check that it is an isomorphism of $A$-modules.
Thus, $\res_{\e_{\nota}}(P(v)_C)\cong G$ as required.

(2) First, we have
\begin{align*}
  \res_{\e_{\nota}}(M \otimes_A \e_{\nota}C)
 = (M \otimes_A \e_{\nota}C)\e_{\nota}
  \cong M \otimes_A (\e_{\nota}C\e_{\nota})
 = M \otimes_A A \cong M
\end{align*}
for all $A$-module $M$. It admits that $\res_{\e_{\nota}}(G \otimes_A \e_{\nota}C) \cong G$ is G-projective as required.

Next, we show $G \otimes_A \e_{\nota}C$ is not projective.
If $\alpha A \otimes_A \e_{\nota}C$ ($\cong \alpha^{-}\alpha^{+}C \in\modcat(C)$) is projective,
then for the projective cover $p: e_{\t(\alpha)}C \to S(\t(\alpha))$ of the simple module corresponding to $\t(\alpha)$
and the homomorphism $f: \alpha^{-}\alpha^{+}C \to S(\t(\alpha))$ sending each paths $\alpha^{-}\alpha^{+}q$
($q$ is an arbitrary path with $\s(q)=\t(\alpha)$) to $q+\rad(e_{\t(\alpha)}C)$,
there must be a homomorphism $h:\alpha^{-}\alpha^{+}C \to e_{\t(\alpha)}C$ such that $f=ph$,
i.e., we have the following diagram
\[\xymatrix{
& \alpha^{-}\alpha^{+}C \ar[d]^{f} \ar[ld]_{h} \\
e_{\t(\alpha)}C \ar[r]_{p}& S(\t(\alpha))
}\]
commutate. Then $e_{\t(\alpha)}C$ is a direct summand of $\alpha^{-}\alpha^{+}C$.
Since $\alpha^{-}\alpha^{+}C$ is indecomposable, we obtain an isomorphism
\begin{align}\label{prop:recoll CMA:iso}
     e_{\t(\alpha)}C = e_{\t(c_{ij})}C
  & \mathop{\cong}^{\tau} \alpha^{-}\alpha^{+}C = c_{ij}^{-}c_{ij}^{+}C\\
    e_{\t(c_{ij})}q & \mapsto c_{ij}^{-}c_{ij}^{+} q. \nonumber
\end{align}
Consider the arrow $c_{i\overline{j+1}}^{+} \in \Q^{\CMA}_1$
($\overline{x}$ lies in $\{1,2,\cdots,n_i\}$ satisfying $\overline{x}\equiv x~(\bmod n_i)$),
we know that this arrow, as a path of length one on $(\Q^{\CMA},\I^{\CMA})$,
is an element in $e_{\t(c_{ij})}C$, then we have
$c_{i\overline{j+1}}^+ \in e_{\t(c_{ij})}C\e_{\t(c_{i\overline{j+1}}^{+})}$.
Thus, the isomorphism (\ref{prop:recoll CMA:iso}) induces an isomorphism
\begin{align*}
 \tau: e_{\t(c_{ij})}C\e_{\t(c_{i\overline{j+1}}^{+})} \to c_{ij}^{-}c_{ij}^{+}C\e_{\t(c_{i\overline{j+1}}^{+})}
\end{align*}
satisfying $\tau(c_{i\overline{j+1}}^+) = \tau(e_{\t(c_{ij})}c_{i\overline{j+1}}^+)
= c_{ij}^{-}c_{ij}^{+}c_{i\overline{j+1}}^+ = c_{ij}^{-}\cdot (c_{ij}^{+}c_{i\overline{j+1}}^+)
= c_{ij}^{-}\cdot 0 = 0$.
It follows that $c_{i\overline{j+1}}^+=0$, this is a contradiction.
Therefore, $G \otimes_A \e_{\nota}C$ is non-projective.
Notice that $\gldim C<\infty$, then $\Gproj(C)=\proj(C)$.
Thus, $G \otimes_A \e_{\nota}C$ is not G-projective as required.
\end{proof}

%


%

\begin{definition}\rm \label{def:forb.mod.}
We call a string module over a gentle algebra \defines{forbidden}
if  the corresponding string contains at least one arrow lying on a forbidden cycle.
Otherwise, we call it \defines{non-forbidden}.
\end{definition}

Clearly, all simple modules are non-forbidden.

\begin{lemma} \label{lemm:forbcyc}
If a forbidden path $F=a_1\cdots c_n$ in a gentle pair $(\Q,\I)$ has an arrow $a_i$ that is not on any forbidden cycle of $(\Q,\I)$, then none of the arrows in $F$ lie on any forbidden cycle.
\end{lemma}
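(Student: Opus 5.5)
We prove the contrapositive: if some arrow $a_k$ of the forbidden path $F=a_1\cdots a_n$ lies on a forbidden cycle $\C$, then every arrow of $F$ lies on $\C$. (If $n\=<1$ the statement is trivial, so assume $n\>=2$, so that $a_ia_{i+1}\in\I$ for $1\=<i<n$.) The key input is the uniqueness of relations forced by the gentle axioms. By \ref{G2} and \ref{G3}, for a fixed arrow $a$ there is at most one arrow $b$ with $ab\in\I$, and at most one arrow $b'$ with $b'a\in\I$. Indeed, if $ab_1,ab_2\in\I$ with $b_1\ne b_2$, then $b_1,b_2$ start at $\t(a)$ and \ref{G3} says exactly one of $ab_1, ab_2$ lies in $\I$, a contradiction. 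The dual argument with \ref{G2} handles $b'$.

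Now write $\C=c_1\cdots c_m$ with $c_jc_{j+1}\in\I$ (indices mod $m$) and $a_k=c_j$. Going forward, $a_ka_{k+1}\in\I$ and $c_jc_{j+1}\in\I$, so uniqueness gives $a_{k+1}=c_{j+1}$. By induction, $a_{k+r}=c_{j+r}$ for all $k+r\=<n$. Going backward, $a_{k-1}a_k\in\I$ and $c_{j-1}c_j\in\I$, so the dual uniqueness gives $a_{k-1}=c_{j-1}$, and again we induct. Hence every $a_i$ is an arrow of $\C$, and therefore lies on a forbidden cycle.

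Taking the contrapositive, if some $a_i$ is not on any forbidden cycle, then no arrow of $F$ is. This is exactly the statement, once we read the typo $c_n$ as $a_n$.

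The only step needing care is the uniqueness claim. One must check that \ref{G2} and \ref{G3} apply when $b_1\neq b_2$ share their source, and that a possible loop ($\s(a)=\t(a)$) does not break the argument. It does not, since the axioms are stated purely in terms of the arrows involved.
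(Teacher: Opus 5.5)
Your argument is correct and is essentially the paper's: both use the gentle axioms (G2) and (G3) to show that if $a_i=c_j$ for an arrow $c_j$ of a forbidden cycle $c_1\cdots c_m$, then $a_{i-1}=c_{j-1}$ and $a_{i+1}=c_{j+1}$ (indices mod $m$), and then propagate along $F$ by induction. Your version isolates the uniqueness of the relation partner of an arrow and states the contrapositive explicitly, which makes it cleaner than the paper's somewhat garbled write-up.
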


\begin{proof}
If $n=1$, we are done. Now we assume $n\geqslant 2$.
If $i\geqslant 2$, and $a_i$ is an arrow on a forbidden cycle $\C=c_1\cdots c_m$,
then there is a $c_j$ such that $a_i$ and $c_j$ coincide.
In this case, $a_{i-1}$ must be an arrow on $\C$.
Otherwise, i.e., $a_{i-1}\neq c_{\overline{j-1}}$
(if $j-1=0$, we define $\overline{j-1}=m$ in this proof),
then $a_{i-1}$ and $c_{\overline{j-1}}$ are two arrows ending with $\s(a_i)=\s(c_j)$.
Since $(\Q,\I)$ is a gentle pair, at most one of $a_{i-1}a_i\in\I$ and $c_{\overline{j-1}}c_j$ holds (see \ref{G2}),
this is a contradiction. Therefore, $a_{i-1}$ is not on $\C$.
Then all arrows $a_1$, $\ldots$, $a_i$ are not on $\C$ by induction.
If $i\leqslant n$, and $a_i$ is an arrow on $\C$, we can show that $a_{i+1}$ is an arrow on $\C$ by using \ref{G3},
and obtain a dual contradiction. Then $a_{i+1}$ is not on $\C$,
and, furthermore, all arrows $a_i$, $\ldots$, $a_n$ are not on $\C$ by induction.
\end{proof}

\begin{lemma} \label{lemm:main0302}
Keep the notations from Lemma \ref{lemm:recollement CMA}.
If $\tC:=C/C(1-\e_{\nota})C$ is quasi-tilted, then:
\begin{enumerate}[label={\rm(\arabic*)}]
  \item \label{lemm:main0302 (1)} $\findim A \=< 2$,
  \item \label{lemm:main0302 (2)} for any non-forbidden module $M\in\ind(\modcat(A))$
    with $\pdim M <\infty$ and $\idim M <\infty$, we have
    \[ \pdim M + \idim M \=< 3. \]
\end{enumerate}
\end{lemma}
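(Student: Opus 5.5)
The plan is to identify $\tC$ explicitly as a gentle algebra and then transfer homological dimensions between $A$ and $\tC$ using the forbidden-path description of projective and injective dimensions.

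\textbf{Step 1: describe $\tC$.} By Theorem \ref{thm:CL}, the idempotent $1-\e_{\nota}$ is the sum of the $e_{ij}$ at the vertices $v_{ij}$. Factoring it out deletes these vertices and every arrow $c^{\pm}$ incident to them. What remains is the bound quiver $(\Q',\I')$ with:
\begin{itemize}
\item $\Q'_0=\Q_0$;
\item $\Q'_1$ the set of arrows of $\Q$ not on any forbidden cycle;
\item $\I'$ generated by the length-two relations of $\I$ among these arrows.
\end{itemize}
Hence $\tC\cong A/\langle c \mid c \text{ on a forbidden cycle}\rangle$. This is again gentle, since deleting arrows preserves (G1)--(G4), and it has no forbidden cycles, so $\gldim\tC<\infty$. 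By Remark \ref{rmk:LMXZ}, the hypothesis that $\tC$ is quasi-tilted gives $\gldim\tC\=<2$ and $\pdim_{\tC}N+\idim_{\tC}N\=<3$ for every indecomposable $\tC$-module $N$. For non-projective non-injective $N$ this is the quasi-tilted bound. If $N$ is projective or injective, one summand is $0$ and the other is at most $\gldim\tC\=<2$.

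\textbf{Step 2: compare forbidden paths.} I would extract from the proof of Lemma \ref{lemma:dim=2} the general formulas
\[
\pdim\M(\sfs)=\max\{\ell(F_{\mathrm{ld}}),\ell(F_{\mathrm{rd}})\},\qquad
\idim\M(\sfs)=\max\{\ell(F_{\mathrm{lu}}),\ell(F_{\mathrm{ru}})\}.
\]
These read as $\infty$ when the relevant forbidden path runs into a forbidden cycle and hence never terminates. The syzygy computation there is insensitive to the bound $2$.

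Now suppose $\pdim_A\M(\sfs)<\infty$. Then $F_{\mathrm{ld}}$ and $F_{\mathrm{rd}}$ are finite. By Lemma \ref{lemm:forbcyc}, none of their arrows lies on a forbidden cycle, because a forbidden path containing such an arrow stays on the cycle forever. The same lemma shows that right maximality in $(\Q,\I)$ and in $(\Q',\I')$ agree: an extension by a cycle arrow would force the whole path onto the cycle. So these paths are maximal forbidden paths of $(\Q',\I')$. A right maximal forbidden path of length $n$ in $(\Q',\I')$ yields a $\tC$-module, e.g.\ the simple at its start, of projective dimension $n$. Hence $n\=<\gldim\tC\=<2$, and therefore $\pdim_A\M(\sfs)\=<2$.

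Band modules have $\pdim=1$ by Remark \ref{rmk:WWBR}. Since $A$ is gentle, hence Gorenstein, $\findim A$ is attained on indecomposable modules of finite projective dimension. This gives \ref{lemm:main0302 (1)}.

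\textbf{Step 3: non-forbidden modules.} For \ref{lemm:main0302 (2)}, let $M=\M(\sfs)$ be non-forbidden with finite $\pdim_A M$ and $\idim_A M$. Then $\sfs$ uses no cycle arrows, so it is a string of $(\Q',\I')$ and $M$ is naturally a $\tC$-module. By Step 2, all four forbidden paths with respect to $\sfs$ avoid forbidden cycles. They also keep the same non-relational condition at the endpoints $v,w$, since the arrows adjacent to $\sfs$ that are removed lie on cycles. So they are exactly the four forbidden paths with respect to $\sfs$ in $(\Q',\I')$. The formulas then give $\pdim_A M=\pdim_{\tC}M$ and $\idim_A M=\idim_{\tC}M$, and Step 1 yields $\pdim_A M+\idim_A M\=<3$.

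\textbf{Main obstacle.} The delicate point is the endpoint bookkeeping in Step 3. One must check that removing cycle arrows does not shorten or lengthen $F_{\mathrm{lu}},F_{\mathrm{ru}},F_{\mathrm{ld}},F_{\mathrm{rd}}$, and does not change which vertex counts as non-relational on $F\sfs$. I would handle this case by case with (G2), (G3) and Lemma \ref{lemm:forbcyc}. An arrow adjacent to $\sfs$ that lies on a forbidden cycle would give the corresponding forbidden path infinite length, contradicting the finiteness of $\pdim_A M$ or $\idim_A M$.
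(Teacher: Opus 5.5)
Your proposal is correct and follows essentially the same route as the paper. You realize $\tC$ as $A$ with the arrows on forbidden cycles deleted, and you use Lemma~\ref{lemm:forbcyc} to see that the finite end forbidden threads of a string avoid forbidden cycles and stay maximal in $\tC$. You then invoke $\gldim\tC\leqslant 2$ for (1) and the quasi-tilted condition on $\tC$ for (2). The differences from the paper are only organizational:
\begin{itemize}
  \item For (2), you prove $\pdim_A M=\pdim_{\tC}M$ and $\idim_A M=\idim_{\tC}M$ directly, whereas the paper first reduces to $\pdim M=\idim M=2$ via (1) and Lemma~\ref{lemma:dim=2}.
  \item For (1), you obtain $\gldim\tC\geqslant n$ from a syzygy computation instead of quoting a global-dimension formula for gentle algebras. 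Note that the simple at the start of the thread has projective dimension at least $n$, not necessarily exactly $n$; the inequality is all you need.
\end{itemize}
You also inherit one imprecision from the paper. The formula $\max\{\ell(F_{\mathrm{ld}}),\ell(F_{\mathrm{rd}})\}$ ignores the projective summands of the syzygy at interior valleys of $\sfs$, so it can undercount $\pdim\M(\sfs)$ by one. This is harmless for your argument, because that contribution depends only on $\sfs$ and is the same over $A$ and over $\tC$.
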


\begin{proof}
(1) If $\findim A \>=3$, then there is an indecomposable module $M\in\modcat(A)$
with finite projective dimension $d=\pdim A\>= 3$.
Thus, $M$ has $d$ non-zero syzygies $\Omega_1(M)$, $\ldots$, $\Omega_d(M)$,
and for each $n\>=d+1$, we have $\Omega_n(M)=0$.
Each non-zero syzygy $\Omega_t(M)$ ($1\=< t\=< d$) has a direct summand $X_t$,
such that $\top(X_1)$, $\ldots$, $\top(X_d)$ are simple modules
and $\M^{-1}(\top(X_1))$, $\ldots$, $\M^{-1}(\top(X_d))$ induce a right maximal forbidden path $F$ on $(\Q,\I)$,
which is of the following form
\[ \xymatrix{v_0 \ar[r]^{\alpha_0} & v_1 \ar[r]^{\alpha_1} & \cdots \ar[r]^{\alpha_{d-2}} & v_{d-1} \ar[r]^{\alpha_{d-1}} & v_d,} \]
where $v_0$ is an ending point of $\sfs$, and $S(v_t) \cong \M^{-1}(\top(X_t))$ holds for all $1\=< t\=< d$.
Therefore, $\alpha_{d-1}$ is not on any forbidden cycle.
It follows that all arrows of $F$ are not on any forbidden cycle by Lemma \ref{lemm:forbcyc}.
Thus, $F$ can be seen as a forbidden path on the bound quiver of $\tC$,
and so, $\gldim (\tC) \>= \ell(F) = d \>=2$ by using \cite[Theorem 5.10]{LGH2024}
(or by using \cite[Theorem 4.7]{FOZ2024}).
We obtain a contradiction since $\tC$ is quasi-tilted.

(2) If there is a non-forbidden module $M\in\modcat(A)$ satisfies $\pdim M + \idim M \>= 4$,
then one of $\pdim M \>=2$ and $\idim M \>=2$ holds.
Let $\sfs$ be a string on $(\Q,\I)$ corresponding to $M \cong \M(\sfs)$.
By (1), we have $\findim A \=< 2$, then by using the fact that $A$, as a gentle algebra, satisfies
\[ \findim A = \sup_{\pdim M < \infty} \pdim M = \sup_{\idim M < \infty} \idim M,  \]
we obtain $\pdim M, \idim M \=< \findim A \=< 2$. It follows
\[\pdim M = \idim M =2.\]
Thus, $\sfs$ satisfies Lemma \ref{lemma:dim=2} \ref{lemma:dim=2 (A)}, \ref{lemma:dim=2 (B)}, and \ref{lemma:dim=2 (C)},
i.e., there is a left maximal forbidden path $\wp=x_1x_2$ ($x_1,x_2\in\Q_1$) with $\t(\wp)=\s(\sfs)$
and there is a right maximal forbidden path $\wp'=y_1y_2$ ($y_1,y_2\in\Q_1$) with $\d(\wp')=\t(\sfs)$.
Thus, one can check that $x_1$, $x_2$, $y_1$, and $y_2$ are not on any forbidden cycle.
Assume $\sfs=s_1\cdots s_n$, where $s_i \in \Q_1^{\pm 1}$, and define
\def\zerostr{\mathsf{0}}
\[
s_i^{\dag} = \begin{cases}
s_i, & \text{if } s_i \in \Q_1 \text{ is not on any forbidden cycle}; \\
s_i, & \text{if } s_i^{-1} \in \Q_1 \text{ is not on any forbidden cycle}; \\
\zerostr, & \text{if } s_i \in \Q_1 \text{ is on a forbidden cycle}; \\
\zerostr, & \text{if } s_i^{-1} \in \Q_1 \text{ is on a forbidden cycle}. \\
\end{cases}
\]
Here, $\zerostr$ ($\notin\Str(A)$) is called a \defines{zero string}, which is used to describe zero module,
i.e., $\M(\zerostr) = 0$ ($\notin\ind(\modcat(A))$).

If $\sfs^{\dag}\ne\zerostr$, then $\sfs^{\dag} := s_1^{\dag}\cdots s_n^{\dag}$
is also a string on the bound quiver of $\tC$,
and $\sfs^{\dag}$ satisfies Lemma \ref{lemma:dim=2} \ref{lemma:dim=2 (A)}, \ref{lemma:dim=2 (B)}, and \ref{lemma:dim=2 (C)}
(indeed, on the bound quiver of $\tC$, $\wp$ and $\wp'$
can be seen as left and right maximal forbidden paths with respect to $\sfs^{\dag}$).
We obtain that $\M(\sfs^{\dag})\in\modcat(\tC)$ is a $\tC$-module with
$\pdim \M(\sfs^{\dag}) + \idim \M(\sfs^{\dag}) = 4$ by Lemma \ref{lemma:dim=2}.
It follows that $\tC$ is not quasi-tilted by Theorem \ref{thm:LMXZ}
(or precisely, by Remark \ref{rmk:LMXZ}). This is a contradiction.

If $\sfs^{\dag}=\zerostr$, then $\ell(\sfs)\>=1$ and there exists one arrow in $\sfs$ such that
this arrow is on some forbidden cycle of $(\Q,\I)$. In this case, $M\cong \M(\sfs)$ is forbidden, a contradiction.

Thus, $\pdim M + \idim M \=< 4$ holds for all $M\in\ind(\modcat(A))$.
\end{proof}

\begin{lemma} \label{lemm:main0303}
Keep the notations from Lemma \ref{lemm:recollement CMA}.
If the statements \ref{lemm:main0302 (1)} and \ref{lemm:main0302 (2)}
given in Lemma \ref{lemm:main0302} hold, then $\tC$ is quasi-titled.
\end{lemma}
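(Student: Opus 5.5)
We prove the converse of Lemma \ref{lemm:main0302} by transporting homological information from $\tC$ back to $A$. The first step is to identify $\tC=C/C(1-\e_{\nota})C$ as a gentle algebra. Its bound quiver is obtained from $(\Q^{\CMA},\I^{\CMA})$ by deleting all vertices $v_{ij}$, and hence all arrows $c_{ij}^{\pm}$. By the construction of $\Q^{\CMA}$, this is the same as deleting from $(\Q,\I)$ every arrow lying on a forbidden cycle, while keeping all vertices and the relations in $\I$ between the remaining arrows. Deleting arrows preserves \ref{G1}--\ref{G4}, so $\tC$ is gentle. Moreover, $\tC$ has no forbidden cycles, since each forbidden cycle of $(\Q,\I)$ has lost all its arrows. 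Hence $\gldim\tC<\infty$.

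The second step shows $\gldim\tC\=<2$. For a gentle algebra of finite global dimension, $\gldim$ equals the maximal length of a forbidden path (\cite[Theorem 5.10]{LGH2024} or \cite[Theorem 4.7]{FOZ2024}). Let $F$ be a maximal forbidden path of $\tC$. Its arrows are not on forbidden cycles of $(\Q,\I)$, so $F$ is a forbidden path in $(\Q,\I)$. By Lemma \ref{lemm:forbcyc}, any extension of $F$ in $(\Q,\I)$ also avoids forbidden cycles. Such an extension would therefore already be visible in $\tC$, so $F$ is maximal in $(\Q,\I)$ as well. A right maximal forbidden path of length $d$ starting at $v$ that avoids forbidden cycles yields a module whose projective resolution terminates after exactly $d$ steps; taking the simple $S(v)$ or a suitable string module and computing syzygies as in Lemma \ref{lemma:dim=2}, we get a module of finite projective dimension $d$. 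Hence $d\=<\findim A\=<2$ by \ref{lemm:main0302 (1)}.

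The third step uses Remark \ref{rmk:LMXZ}: it now suffices to show that every indecomposable $\tC$-module $N$ with finite projective and injective dimension satisfies $\pdim N+\idim N\=<3$. Band modules have both dimensions equal to $1$. If $N=\M(\sfs)$ is a string module with $\pdim N+\idim N\>=4$, then $\gldim\tC\=<2$ forces $\pdim N=\idim N=2$. Lemma \ref{lemma:dim=2} then gives conditions (A), (B), (C) for $\sfs$ in $\tC$. The string $\sfs$ is also a string of $(\Q,\I)$, and it is non-forbidden because it uses no arrow of a forbidden cycle. We must check that the four forbidden paths with respect to $\sfs$ in $(\Q,\I)$ coincide with those in $\tC$, that is, that none of them is lengthened by arrows on forbidden cycles. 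This again follows from Lemma \ref{lemm:forbcyc}: a forbidden path containing one arrow off all forbidden cycles has all its arrows off forbidden cycles. The one delicate case is a path of length $0$ at the end of $\sfs$, where we argue directly with \ref{G2} and \ref{G3}. With this settled, Lemma \ref{lemma:dim=2} gives $\pdim_A\M(\sfs)=\idim_A\M(\sfs)=2$, which contradicts \ref{lemm:main0302 (2)}.

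The main obstacle is this comparison of forbidden paths, and of non-relational end conditions, between $(\Q,\I)$ and $\tC$ at the endpoints of $\sfs$. An arrow on a forbidden cycle could in principle make an endpoint relational in $(\Q,\I)$ but not in $\tC$, which would shift which forbidden paths count. I would handle this by a case check using \ref{G1}--\ref{G3} together with the fact that each vertex on a forbidden cycle already has its relation slots occupied by the cycle's arrows.
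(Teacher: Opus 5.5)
\paragraph{Verdict.} Your overall route is the paper's: pull a $\tC$-string module with $\pdim=\idim=2$ back to $A$ and invoke Lemma~\ref{lemma:dim=2}. Your Step~2 is a sound, more direct replacement for the paper's appeal to Beligiannis plus deleting non-relational vertices. One caveat there: $S(v)$ can have infinite projective dimension when the other arrow leaving $v$ lies on a forbidden cycle, so use $e_vA/a_1A$ instead, whose successive syzygies are $a_1A,\dots,a_dA$.

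\paragraph{The gap is in Step~3.} You claim the four forbidden paths with respect to $\sfs$ are the same in $(\Q,\I)$ and in $\tC$. This is false when an endpoint of $\sfs$ lies on a forbidden cycle, and no local check with \ref{G2} and \ref{G3} can rescue it. Take a loop $c$ at $v$ with $c^2\in\I$ and an arrow $y\colon v\to w$. Then $\tC=\kk(v\to w)$, and all four forbidden paths with respect to $\sfs=y$ are trivial in $\tC$. In $(\Q,\I)$, however, the forbidden path starting with $c$ never terminates. Indeed $\Omega(\M(y))\cong cA\cong\M(y)$, so $\pdim_A\M(y)=\infty$, and likewise $\idim_A\M(y)=\infty$. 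In general, at a vertex on a forbidden cycle the only arrows of $\tC$ are at most one incoming arrow $x$ and one outgoing arrow $y$, with $xy\in\I$. Both forbidden paths at such an endpoint are therefore trivial in $\tC$ but run around the cycle in $(\Q,\I)$. So $\M(\sfs)$ can have infinite dimensions over $A$, and hypothesis (2) says nothing about it.

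\paragraph{The missing idea.} It is a global argument that uses your own bound $\gldim\tC\leqslant 2$.
\begin{enumerate}
  \item At an endpoint $w$ of a non-trivial string, suppose both the forbidden path ending at $w$ and the one starting at $w$ are non-trivial. Then they compose into one forbidden path.
  \begin{itemize}
    \item If $\sfs$ arrives at $w$ along $y$, the down-path starts with the arrow $d$ with $yd\notin\I$, and the up-path ends with the other incoming arrow $z$. So $zd\in\I$ by \ref{G2}.
    \item If $\sfs$ leaves $w$ along an outgoing arrow, the same holds dually via \ref{G3}.
  \end{itemize}
  \item Hence the two lengths at each endpoint add up to at most $2$. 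Conditions \ref{lemma:dim=2 (B)} and \ref{lemma:dim=2 (C)} then force \emph{each} endpoint of $\sfs$ to carry a forbidden path of length $2$ in $\tC$.
  \item By the analysis above, neither endpoint can lie on a forbidden cycle. So every arrow incident to an endpoint belongs to $\tC$, and only now does Lemma~\ref{lemm:forbcyc} give the coincidence you want.
  \item If $\sfs=e_w$ with $w$ on a forbidden cycle, the threads through $x$ and $y$ concatenate because $xy\in\I$, so they cannot both have length $2$.
\end{enumerate}

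The paper isolates exactly this situation as its case (a). There it only records that one of $\pdim M$, $\idim M$ is infinite, which by itself contradicts nothing. The argument above is what actually disposes of that case.
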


\begin{proof}
Notice that in a case of $A$ to be a gentle algebra with $\gldim A=\infty$,
\cite[Corollary 6.8 (i)--(iv)]{B2011} admits
\[ \gldim C = \begin{cases}
\findim A, &\text{if } \findim A \>=2; \\
2, & \text{if } \findim A \=< 2.
\end{cases} \]
Then Lemma \ref{lemm:main0302} \ref{lemm:main0302 (1)} yields $\gldim C = 2$.
Since:
\begin{itemize}
  \item the bound quiver of $\tC$ is obtained by deleting all vertices
$\mathfrak{v}^{-1}(cA)$ ($c\in\Q_1$ is on some forbidden cycle of $(\Q,\I)$)
corresponding to non-projective indecomposable G-projective $A$-modules from $(\Q^{\CMA},\I^{\CMA})$;
  \item and all $\mathfrak{v}^{-1}(cA)$, as vertices on $(\Q^{\CMA},\I^{\CMA})$, are not relational,
\end{itemize}
we have $\gldim \tC \=< \gldim C$. Thus, $\gldim \tC \=< 2$.

Next, assume $(\Q,\I)$ has $r$ forbidden cycles $\C_i=c_{i1}c_{i2}\cdots c_{in_i}$ ($1\=< r\=< r$),
and the bound quiver of $\tC$ is $(\tilde{\Q},\tilde{\I})$.
Let $\tilde{M}$ be an indecomposable $\tC$-module,
and $\tilde{\sfs}$ be a string on the bound quiver of $\tC$ corresponding to $\tilde{M}$.
Notice that $(\tilde{\Q},\tilde{\I})$ is obtained by deleting all vertices
$\mathfrak{v}^{-1}(c_{ij}A)$ (and all arrows starting or ending with $\mathfrak{v}^{-1}(c_{ij}A)$)
from $(\Q^{\CMA}, \I^{\CMA})$, then $\tilde{\sfs}$ can be seen as a string $\sfs$ on $(\tilde{\Q},\tilde{\I})$.
By using $\gldim \tC \=< 2$, we have $\pdim \tilde{M} \leqslant 2$ and $\idim \tilde{M} \leqslant 2$.
Assume $\pdim \tilde{M} = \idim \tilde{M} = 2$.
We obtain that $\tilde{\sfs}$ satisfies Lemma \ref{lemma:dim=2}
\ref{lemma:dim=2 (A)}, \ref{lemma:dim=2 (B)}, and \ref{lemma:dim=2 (C)}
(the length of $\tilde{\sfs}$ may be zero in this proof).
Here, the following two cases need be considered:
\begin{itemize}
  \item[(a)] $\sfs$ has an endpoint on some forbidden cycle of $(\tilde{\Q},\tilde{\I})$;
  \item[(b)] two endpoints are not on any forbidden cycle of $(\tilde{\Q},\tilde{\I})$.
\end{itemize}

Let $M$ be the indecomposable $A$-module corresponding to $\sfs$.
If (a) holds, then one of $\pdim M$ and $\idim M$ is infinite.
If (b) holds, assume $F_{\mathrm{lu}}$, $F_{\mathrm{ru}}$, $F_{\mathrm{ld}}$, $F_{\mathrm{rd}}$
are four forbidden paths on $(\tilde{\Q},\tilde{\I})$ with respect to $\tilde{\sfs}$.
Then, naturally, they can be seen as four forbidden paths on $(\Q,\I)$.
In this case, $F_{\mathrm{lu}}$ as a forbidden path on $(\Q,\I)$ is left maximal.
Otherwise, there is an arrow $a$ with $\t(a)=\s(F_{\mathrm{lu}})$ such that
$aF_{\mathrm{lu}}$ is a forbidden path on $(\Q,\I)$.
Recall that $(\tilde{\Q},\tilde{\I})$ is obtained by deleting all vertices
$\mathfrak{v}^{-1}(c_{ij}A)$ from $(\Q^{\CMA}, \I^{\CMA})$,
we get $\s(a) \in \mathfrak{v}^{-1}(\ind(\Gproj(A)))$,
and so, $\t(a)=\s(F_{\mathrm{lu}})$ is a vertex on a forbidden cycle of $(\Q,\I)$,
a contradiction. One can check that $F_{\mathrm{ru}}$ as a forbidden path on $(\Q,\I)$ is left maximal in a similar way,
and one can check that $F_{\mathrm{ld}}$ and $F_{\mathrm{rd}}$ as forbidden paths on $(\Q,\I)$ are right maximal in a dual way.
Thus, on the bound quiver $(\Q,\I)$, we obtain that:
\begin{itemize}
  \item at least one of $\ell(F_{\mathrm{lu}})$ and $\ell(F_{\mathrm{ru}})$ is $2$ by $\idim M =2$;
  \item and at least one of $\ell(F_{\mathrm{ld}})$ and $\ell(F_{\mathrm{rd}})$ is $2$ by $\pdim M =2$.
\end{itemize}

Therefore, $\pdim M = \idim M = 2$ holds, this contradict with Lemma \ref{lemm:main0302} \ref{lemm:main0302 (2)}.
Then at least one of $\pdim \tilde{M}$ and $\idim \tilde{M}$ is less than of equal to $1$,
i.e., $\tC$ is quasi-tilted.
\end{proof}

\begin{theorem} \label{thm:main}
Let $A=\kk\Q/\I$ be a gentle algebra and let $C$ be its CM--Auslander algebra. Then the quotient $\tC:=C/C(1-\e_{\nota})C$ is quasi-tilted if and only if the following statements hold.
\begin{enumerate}[label=\text{\rm(\arabic*)}]
  \item $\findim A \=< 2$; \label{thm:main (1)}
  \item for any non-forbidden indecomposable $A$-module $M$,
    if $\pdim M<\infty$ and $\idim M <\infty$,
    then $\pdim M + \idim M \=< 3$. \label{thm:main (2)}
\end{enumerate}
\end{theorem}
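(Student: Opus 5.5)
The theorem follows by combining the two directions already established. For necessity, I would invoke Lemma \ref{lemm:main0302}: if $\tC=C/C(1-\e_{\nota})C$ is quasi-tilted, then part \ref{lemm:main0302 (1)} gives $\findim A\=<2$. Part \ref{lemm:main0302 (2)} gives $\pdim M+\idim M\=<3$ for every non-forbidden indecomposable $M$ with finite projective and injective dimension. These are exactly conditions \ref{thm:main (1)} and \ref{thm:main (2)}.

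For sufficiency, I would invoke Lemma \ref{lemm:main0303}, which says that conditions \ref{lemm:main0302 (1)} and \ref{lemm:main0302 (2)} of Lemma \ref{lemm:main0302} imply that $\tC$ is quasi-tilted. The proof of that lemma has two steps, which I would recheck in the following order.
\begin{enumerate}
  \item Show $\gldim\tC\=<2$. Use the formula for $\gldim C$ in terms of $\findim A$ from \cite[Corollary 6.8]{B2011}. Then note that deleting the non-relational vertices $\mathfrak{v}^{-1}(cA)$ does not increase the global dimension.
  \item Take an indecomposable $\tC$-module $\tilde M$ with $\pdim\tilde M=\idim\tilde M=2$ and transport it to an $A$-module. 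Lemma \ref{lemma:dim=2} applies on both sides, and one checks that the relevant left and right maximal forbidden paths remain maximal in $(\Q,\I)$. This yields a non-forbidden $A$-module $M$ with finite dimensions and $\pdim M+\idim M=4$, contradicting \ref{thm:main (2)}.
\end{enumerate}
Remark \ref{rmk:LMXZ} then converts "no indecomposable with both dimensions equal to $2$, together with $\gldim\le2$" into quasi-tiltedness.

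The one point needing care is the case where $\gldim A<\infty$, since the cited formula for $\gldim C$ is stated for $\gldim A=\infty$. In that case there are no forbidden cycles, so $C=A=\tC$. Then $\gldim\tC=\findim A\=<2$ directly, and the argument above goes through unchanged.

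The main obstacle is step 2 of the sufficiency direction. There one must ensure that the string corresponding to $\tilde M$ really defines a non-forbidden $A$-module with finite projective and injective dimensions. The case where an endpoint of the string lies on a forbidden cycle of $(\Q,\I)$ needs separate treatment. I would handle it by observing that such an endpoint either produces infinite dimension on the $A$-side, or is shielded in $\tC$ by the deleted vertices, so that the forbidden path through it in $\tC$ has length at most $1$. In the latter case $\tilde M$ could not have both dimensions equal to $2$.
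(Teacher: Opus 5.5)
Your top-level argument is the same as the paper's. The paper proves Theorem~\ref{thm:main} simply by combining Lemma~\ref{lemm:main0302} and Lemma~\ref{lemm:main0303}, and your separate treatment of $\gldim A<\infty$ (no forbidden cycles, so $\tC=C=A$) is a harmless addition.

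However, the step you single out as the main obstacle is a genuine gap, and your patch does not close it. The gap is also present in the paper's proof of Lemma~\ref{lemm:main0303}: in case (a) it only records that $\pdim M$ or $\idim M$ is infinite and stops.

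\textbf{The first branch of your dichotomy gives nothing.} Condition (2) of Theorem~\ref{thm:main} only constrains modules with finite projective and injective dimension, so an infinite dimension on the $A$-side contradicts nothing. This branch in fact always occurs: the deleted cycle arrow $c_{j+1}$ leaving the endpoint puts the non-projective G-projective module $c_{j+1}A$ into a syzygy.

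\textbf{The second branch does not suffice as stated either.} Short forbidden paths at the bad endpoint do not prevent conditions (B) and (C) of Lemma~\ref{lemma:dim=2} from both being realized at the \emph{other} endpoint of $\tilde{\sfs}$.

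Case (a) is closed by two consequences of gentleness.

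(i) Suppose an endpoint $v$ lies on a forbidden cycle, with cycle arrows $c_j$ ending at $v$ and $c_{j+1}$ starting at $v$, and $c_jc_{j+1}\in\I$. Then $\tilde{\Q}$ has at most one arrow $x$ ending at $v$ and at most one arrow $y$ starting at $v$. Since $c_jy\notin\I$, gentleness forces $xy\in\I$. Hence both forbidden paths with respect to $\tilde{\sfs}$ at $v$ are trivial in $\tC$. If $\tilde{\sfs}=e_v$, the relation paths realizing $\pdim$ and $\idim$ of $S(v)$ concatenate through $xy$. Their lengths therefore sum to at most $\gldim\tC\leqslant 2$, because a relation path of length $L$ in $\tilde{\Q}$ forces $\gldim\tC\geqslant L$.

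(ii) At any endpoint $w$, the forbidden paths $F_{\mathrm{ru}}$ and $F_{\mathrm{rd}}$ compose to a single relation path.
\begin{itemize}
  \item If the edge of $\tilde{\sfs}$ at $w$ is an arrow $a$ starting at $w$, the last arrow $x_2$ of $F_{\mathrm{ru}}$ satisfies $x_2a\notin\I$, and the first arrow $y_1$ of $F_{\mathrm{rd}}$ differs from $a$. Hence $x_2y_1\in\I$.
  \item Dually, if the edge is an arrow $b$ ending at $w$, then $by_1\notin\I$ and $x_2\neq b$, so again $x_2y_1\in\I$.
\end{itemize}
Thus $\ell(F_{\mathrm{ru}})+\ell(F_{\mathrm{rd}})\leqslant\gldim\tC\leqslant 2$. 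The two length-$2$ paths demanded by (B) and (C) must therefore sit at different endpoints, and (i) rules this out in case (a).

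Only case (b) then remains. There your transport argument works: maximality of the forbidden paths is preserved in $(\Q,\I)$ because, by Lemma~\ref{lemm:forbcyc}, a relation path containing a cycle arrow lies entirely on the cycle.
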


\begin{proof}
By Lemmas \ref{lemm:main0302} and \ref{lemm:main0303}, we obtain this theorem immediately.
\end{proof}

We provide a remark for Theorem \ref{thm:main} in two trivial cases.

\begin{remark} \rm
If either every module over a gentle algebra $A$ is forbidden, or every forbidden module over
$A$ fails to satisfy at least one of conditions \ref{thm:main (1)} and \ref{thm:main (2)}, then, trivially,
$A$ satisfies both \ref{thm:main (1)} and \ref{thm:main (2)}.
In this case, if $\findim A \=< 2$, then $\tC$ is quasi-tilted.
\end{remark}

\begin{example}\label{examp:quasi-tilted} \rm
Consider the gentle algebra $A=\kk\Q/\I$ given in Example \ref{examp:gentle}.
Example \ref{examp:CMA of gentle} has shown the bound quiver of its CM--Auslander algebras.
\def\lengthone{0.1}
\def\lengthtwo{0.2}
\begin{figure}[htbp]
  \centering
\begin{tikzpicture}[scale=0.5]
\fill[top color=blue!25, bottom color=blue!5]
  ( 7.2,0) to[out=90,in=0] (0,7.2) to[out=180,in=90]
  (-7.2,0) to[out=-90,in=180] (0,-7.2) to[out=0,in=-150] (3.6,-6.2)
  -- (7,-7) -- (6.2,-3.6) to[out=60,in=-90] ( 7.2,0);
\draw[blue] (7,-7) node[below]{$C:=A^{\mathrm{CMA}}$};
\foreach \x in {60,180,300}
\fill[white][rotate around = {\x:(0,0)}][line width=1pt]
  (2,0) circle(1) (6,0) circle(1);
\foreach \x in {60,180,300}
\fill[red!35][rotate around = {\x:(0,0)}][line width=1pt]
  (2,0) circle(0.65) (6,0) circle(0.65);
\foreach \x in {0,120,240}
\draw[green!35][rotate around = {\x:(0,0)}]
  [line width=12pt][->] (1.5,0) -- (8.5,0);
\draw[green][rotate around = {  0:(0,0)}] (9,0) node{$C_1$};
\draw[green][rotate around = {120:(0,0)}] (9,0) node{$C_2$};
\draw[green][rotate around = {240:(0,0)}] (9,0) node{$C_3$};
\foreach \x in {60,180,300}
\draw[red!35][line width=5pt]
  [rotate around = {\x:(0,0)}][->]
  (1.5,0) -- (0.3,0);
\foreach \x in {0,120,240}
\draw[red!35][line width=7pt]
  [rotate around = {\x+60:(0,0)}]
  (5.75,0) -- (2,0);
\draw[red] (0,0) node{$C'$};
\foreach \x in {0,60,120,180,240,300}
\draw[->][rotate around = {10+\x:(0,0)}][line width=1pt] (2,0) arc(0:40:2);
\foreach \x in {0,60,120,180,240,300}
\draw[->][rotate around = {5+\x:(0,0)}][line width=1pt] (6,0) arc(0:50:6);
\draw (2,0) node{$1$} (-1*1,1.73*1) node{$2$} (-1*1,-1.73*1) node{$3$};
\draw (4,0) node{$4$} (-1*2,1.73*2) node{$5$} (-1*2,-1.73*2) node{$6$};
\draw (6,0) node{$7$} (-1*3,1.73*3) node{$8$} (-1*3,-1.73*3) node{$9$};
\foreach \x in {0,120,240}
\draw[->][rotate around = {\x:(0,0)}][line width=1pt] (3.7,0) -- (2.3,0);
\foreach \x in {0,120,240}
\draw[->][rotate around = {\x:(0,0)}][line width=1pt] (5.7,0) -- (4.3,0);
\draw[rotate around = { 60-30:(0,0)}] (2.4+\lengthone,0) node{$a_{12}^{-}$};
\draw[rotate around = {180-30:(0,0)}] (2.4+\lengthone,0) node{$a_{23}^{-}$};
\draw[rotate around = {300-30:(0,0)}] (2.4+\lengthone,0) node{$a_{31}^{-}$};
\draw[rotate around = { 60-30:(0,0)}] (6.4+\lengthtwo,0) node{$a_{78}^{-}$};
\draw[rotate around = {180-30:(0,0)}] (6.4+\lengthtwo,0) node{$a_{89}^{-}$};
\draw[rotate around = {300-30:(0,0)}] (6.4+\lengthtwo,0) node{$a_{97}^{-}$};
\draw[rotate around = { 60+30:(0,0)}] (2.4+\lengthone,0) node{$a_{12}^{+}$};
\draw[rotate around = {180+30:(0,0)}] (2.4+\lengthone,0) node{$a_{23}^{+}$};
\draw[rotate around = {300+30:(0,0)}] (2.4+\lengthone,0) node{$a_{31}^{+}$};
\draw[rotate around = { 60+30:(0,0)}] (6.4+\lengthtwo,0) node{$a_{78}^{+}$};
\draw[rotate around = {180+30:(0,0)}] (6.4+\lengthtwo,0) node{$a_{89}^{+}$};
\draw[rotate around = {300+30:(0,0)}] (6.4+\lengthtwo,0) node{$a_{97}^{+}$};
\draw[rotate around = {  0:(0,0)}] (3,0.5) node{$a_{41}$};
\draw[rotate around = {120:(0,0)}] (3,0.5) node{$a_{52}$};
\draw[rotate around = {240:(0,0)}] (3,0.5) node{$a_{63}$};
\draw[rotate around = {  0:(0,0)}] (5,0.5) node{$a_{74}$};
\draw[rotate around = {120:(0,0)}] (5,0.5) node{$a_{85}$};
\draw[rotate around = {240:(0,0)}] (5,0.5) node{$a_{96}$};
\foreach \x in {0,120,240}
\draw[line width=1.4pt][dashed][red]
[rotate around = {\x:(0,0)}] (1.85,-0.5) arc(-90:-270:0.5);
\foreach \x in {0,120,240}
\draw[line width=1.4pt][dashed][red]
[rotate around = {\x:(0,0)}] (5.95,-1) arc(-90:90:1);
\foreach \x in {0,120,240}
\draw[line width=1.4pt][dashed][violet]
[rotate around = {\x:(0,0)}] (3,0) arc(-180:0:1);
\draw[rotate around = { 60:(0,0)}] (2,0) node{\footnotesize$G_{12}$} (-1*1,1.73*1);
\draw[rotate around = {180:(0,0)}] (2,0) node{\footnotesize$G_{23}$} (-1*1,1.73*1);
\draw[rotate around = {300:(0,0)}] (2,0) node{\footnotesize$G_{31}$} (-1*1,1.73*1);
\draw[rotate around = { 60:(0,0)}] (6,0) node{\footnotesize$G_{78}$} (-1*1,1.73*1);
\draw[rotate around = {180:(0,0)}] (6,0) node{\footnotesize$G_{89}$} (-1*1,1.73*1);
\draw[rotate around = {300:(0,0)}] (6,0) node{\footnotesize$G_{97}$} (-1*1,1.73*1);
\end{tikzpicture}
  \caption{\texttt{Recollements of $\modcat(\tilde{C})$, $\modcat(C)$ and $\modcat(A)$}}
  \label{fig:Quasitilted}
\end{figure}
Then we have a recollement
\begin{align*}
\calR(C,1-\e_{\nota}):= \
\xymatrix@C=2cm{
  \modcat(\tC) \ar[r]^{\mathrm{embedding}}
& \modcat(C) \ar[r]^{(-)(1-\e_{\nota})}
  \ar@/_1.5pc/[l]_{-\otimes_{C} \tC}
  \ar@/^1.5pc/[l]^{\Hom_{C}(\tC, -)}
& \modcat(C'),
  \ar@/_1.5pc/[l]_{-\otimes_{C'} (1-\e_{\nota})C}
  \ar@/^1.5pc/[l]^{\Hom_{C'}(C(1-\e_{\nota}),-)}
}
\end{align*}
where $\e_{\nota} = e_{G_{12}}+e_{G_{23}}+e_{G_{31}}+e_{G_{78}}+e_{G_{89}}+e_{G_{97}}$.
In \Pic \ref{fig:Quasitilted}, the shadow part marked by ``\shadow{red!35}'' is $C'$,
the shadow part marked by ``\shadow{blue!35}'' is $C$,
and the shadow part marked by ``\shadow{green!35}'' is $\tC$ ($= C_1\times C_2 \times C_3$)
$\cong (\kk(\xymatrix{1 \ar[r]^{a} & 2 \ar[r]^b & 3})/\langle ab\rangle)^{\times 3}$.
In this example, it is clear that $\tC$ is quasi-tilted.
For the algebra $A$, one can check that it is representation-finite
and its Auslander--Reiten quiver is shown in \Pic \ref{fig:ARquiver}.
\begin{figure}[htbp]
  \centering
  \includegraphics[width=9cm]{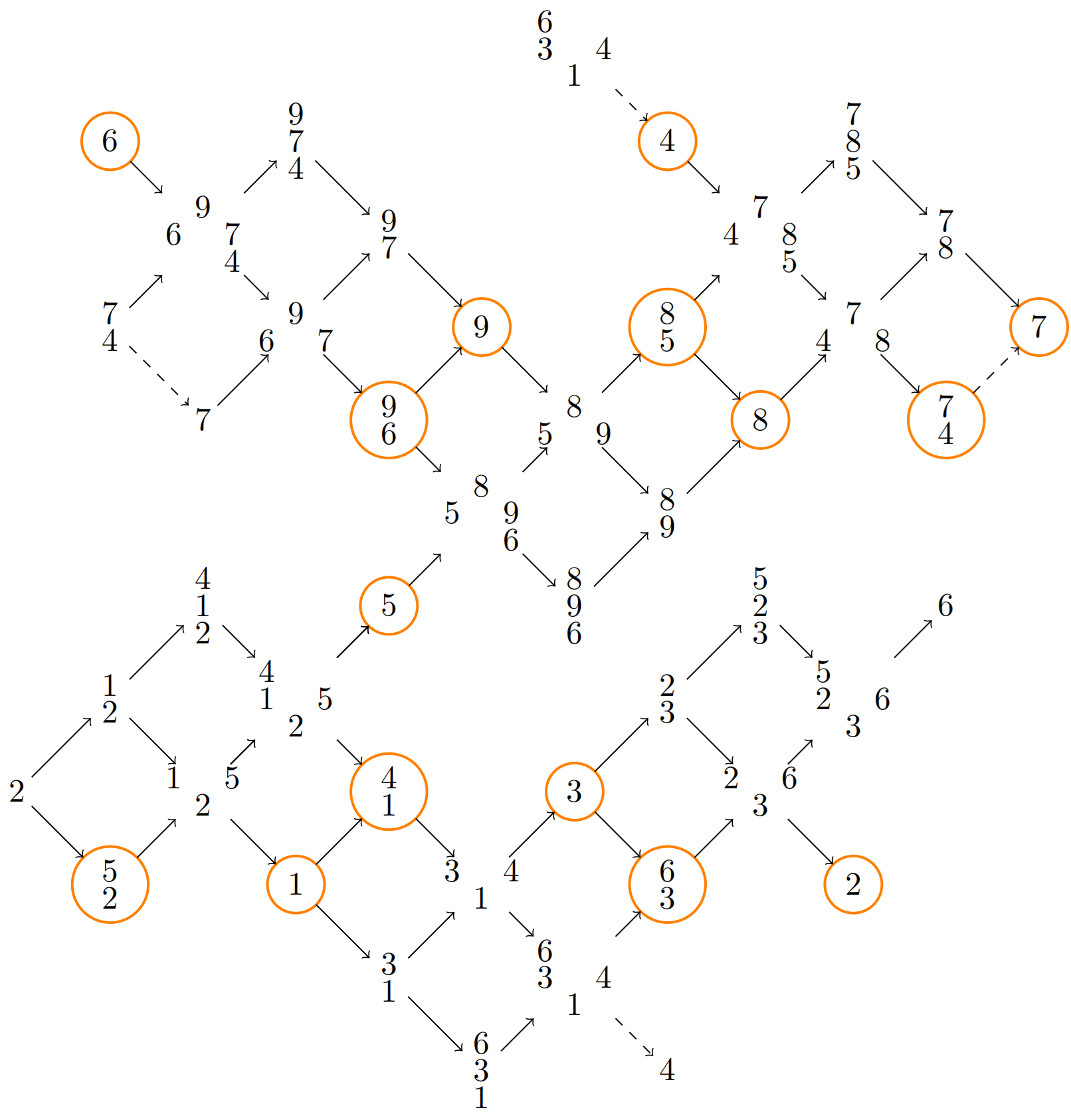}\\
  \caption{\texttt{The Auslander--Reiten quiver of the gentle algebra $A$ given in Example \ref{examp:gentle}
  (the modules marked by ``${\color{orange}\pmb{\bigcirc}}$'' are non-forbidden modules)}}
  \label{fig:ARquiver}
\end{figure}
The algebra $A$ has 15 indecomposable non-forbidden modules:
\begin{align}
 & \sm{1}, && \sm{4}, && \sm{7}, && \sm{4\\1}, && \sm{7\\4}; & \label{in examp quasi-tilted 1} \\
 & \sm{2}, && \sm{5}, && \sm{8}, && \sm{5\\2}, && \sm{8\\5}; & \label{in examp quasi-tilted 2} \\
 & \sm{3}, && \sm{6}, && \sm{9}, && \sm{6\\3}, && \sm{9\\6}; & \label{in examp quasi-tilted 3}
\end{align}
where (\ref{in examp quasi-tilted 1}), (\ref{in examp quasi-tilted 2}), and (\ref{in examp quasi-tilted 3})
are provided by the subquiver $1 \mathop{\longleftarrow}\limits^{a_{41}} 4 \mathop{\longleftarrow}\limits^{a_{74}} 7$,
$2 \mathop{\longleftarrow}\limits^{a_{52}} 5 \mathop{\longleftarrow}\limits^{a_{85}} 8$,
and $3 \mathop{\longleftarrow}\limits^{a_{63}} 6 \mathop{\longleftarrow}\limits^{a_{96}} 9$, respectively.
One can check that the projective resolutions of $\sm{1}$, $\sm{4\\1}$ and $\sm{7}$ are infinite,
then, by the symmetry of the bound quiver of $A$, we obtain
$\pdim\sm{2}$, $\pdim \sm{5\\2}$, $\pdim\sm{8}$, $\pdim\sm{3}$, $\pdim\sm{6\\3}$ and $\pdim\sm{9}$ are infinite.
Dually, one can check that $\idim\sm{7\\4} = \infty$,
then, by the symmetry of the bound quiver of $A$, we obtain $\idim\sm{8\\5}$ and $\idim\sm{9\\6}$ are infinite.
Moreover, we have the projective resolution of ${\sm{4}}$ is
$$	0 \longrightarrow P(1)_A={\sm{1\\2} } \longrightarrow P(4)_A={\sm{4 \\1\\2}} \longrightarrow {\sm{4}} \to 0, $$
and the injective resolution of ${\sm{4}}$ is
$$	0 \longrightarrow {\sm{4}} \longrightarrow E(4)={\sm{9 \\7\\4} } \longrightarrow E(7) = {\sm{9\\7}} \to 0, $$
then $\pdim\sm{4} + \idim\sm{4} = 2~ (\=< 3)$.
We can obtain $\pdim\sm{5} + \idim\sm{5} = 2$ and $\pdim\sm{6} + \idim\sm{6} = 2$
by the symmetry of the bound quiver of $A$.
Therefore, $A$ satisfies the conditions \ref{thm:main (1)} and \ref{thm:main (2)} given in Theorem \ref{thm:main}.
\end{example}

\section{Derived representation types}

The derived category $\Dcat^b(A)$ of a gentle algebra $A$ (or some graded gentle algebra)
have been studied by \cite[etc]{AG2008,ALP2016,KY2018}.
In \cite{BM2003,ALP2016}, the authors described objects and morphisms in $\Dcat^b(A)$ by using homotopy strings and homotopy bands,
this result later became the basis for studying the derived representations of algebras,
see for example, \cite[etc]{ZH2016, Zhang2019Indecomposables}.

\subsection{Homotopy strings and homotopy bands}

Let $(\Q,\I)$ be a gentle pair, $\Q_l$ be the set of all non-zero paths in $(\Q,\I)$ of length $l$.
and $\Q_l^{-1}$ be the set of all formal inverses of non-zero paths in $(\Q,\I)$ of length $l$.
A \defines{homotopy string} on $(\Q,\I)$ is a sequence $\sfh=(\wp_i)_{i=1}^{r}$ in $\Q_{l}^{\pm 1} := \Q_l\cup\Q_l^{-1}$,
where $\wp_i=a_{i1}\cdots a_{il_i}$ is either a non-zero path or a formal inverse such that
for any non-zero path (resp., a formal inverse) $\wp_i$,
one of the following statements holds ($i\=< r-1$):
\begin{itemize}
  \item $\wp_{i+1}$ is a formal inverse of a path satisfying $\wp_{i+1}^{-1}\ne\wp_i$
    (resp., $a_{i+1\ 1}^{-1}~a_{il_i}^{-1}\in \I$);
  \item $\wp_{i+1}$ is a path satisfying $a_{il_i}a_{i+1~1}\in \I$
    (resp., $\wp_{i+1}^{-1}\ne\wp_i$).
\end{itemize}
A \defines{homotopy band} $\sfh=(\wp_i)_{i=1}^{r}$ is a homotopy string such that:
\begin{itemize}
  \item $\t(\sfh)=\s(\sfh)$;
  \item if $\wp_1$ and $\wp_r$ both are paths (resp., formal inverse),
    then $a_{rl_r}a_{1~1} ~\in\I$ (resp., $a_{1~1}^{-1}~a_{rl_r}^{-1}\in\I$);
  \item the number of paths in $\sfh$ equals to that of formal inverses in $\sfh$;
  \item $\sfh$ is not a non-trivial power of any homotopy string.
\end{itemize}

We call two homotopy strings $\sfh_1$ and $\sfh_2$ are equivalent if $\sfh_1=\sfh_2$ or $\sfh_1=\sfh_2^{-1}$ holds;
and we call two homotopy bands $\sfh_1$ and $\sfh_2$ are equivalent if $\sfh_1=\sfh_2[t]$ or $\sfh_1=\sfh_2^{-1}[t]$ holds,
where for each $\sfh=\wp_1\cdots \wp_l$, $\sfh[t] := \wp_{1+t}\cdots \wp_l\wp_1\cdots \wp_t$ ($0\=< t< l$).
Homotopy strings and homotopy bands describe all indecomposable objects in $\Dcat^b(A)$ of a gentle algebra $A$.

\begin{theorem}[{Bekkert--Merklen \cite[Theorem 3]{BM2003}}] \label{thm:BM2003}
Let $A=\kk\Q/\I$ be a gentle algebra and $\hStr(A)$ {\rm(}resp., $\hBand(A)${\rm)}
be the set of all equivalence classes of homotopy strings {\rm(}resp., homotopy bands{\rm)}.
Then there is a bijection
\[\X: (\hStr(A)\times\NN)\cap(\hBand(A)\times\mathscr{J}\times\NN) \to \ind(\Dcat^b(A))\]
from the disjoint union $ (\hStr(A)\times\NN)\cap(\hBand(A)\times\mathscr{J}\times\NN)$ to
the set of all isoclasses of indecomposable complexes in $\Dcat^b(A)$,
where $\mathscr{J}$ is the set of all Jordan blocks with non-zero eigenvalues.
\end{theorem}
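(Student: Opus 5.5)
The statement is the Bekkert--Merklen classification, and I would prove it by reducing it to a classical matrix problem. The first step is to replace $\Dcat^b(A)$ by the homotopy category $K^{-,b}(\proj A)$ of right-bounded complexes of finitely generated projectives with bounded cohomology. This is the standard equivalence for any finite-dimensional algebra. By a homotopy argument, every object is isomorphic to a \emph{minimal} complex $\comp{P}$, i.e.\ one with $d(\comp{P})\subseteq \rad\comp{P}$. Two minimal complexes are isomorphic in $K^{-,b}(\proj A)$ exactly when they are isomorphic as complexes. So the task becomes classifying indecomposable minimal complexes up to isomorphism.

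Next, write $P^n=\bigoplus_{v\in\Q_0}P(v)_A^{m_{n,v}}$. Since $\Hom_A(P(u),P(v))\cong e_vAe_u$ has a basis of nonzero paths, each differential is a block matrix whose entries are linear combinations of nonzero paths of length $\>=1$. For a gentle pair, conditions \ref{G1}--\ref{G4} split each vertex $v$ into at most two ``halves'', one for each maximal direction through $v$ in which paths compose nontrivially. A path from $u$ to $v$ then links a half at $u$ to a half at $v$, and $d^{n+1}d^n=0$ forces compositions to vanish precisely through relations in $\I$. I would then rewrite the data $(P^n,d^n)_n$ as a representation of a bundle of semichains, also called a clan, in the sense of Bondarenko. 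Each vertex-half at each degree becomes a chain, ordered by path length. Admissible base changes, namely automorphisms of the $P^n$ modulo the radical together with radical corrections, become the allowed row and column operations.

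The indecomposable representations of such a bundle are known: Bondarenko, and independently Crawley-Boevey in the clan formalism, show they are ``strings'' and ``bands''. Strings are words alternating between chain elements and the identifications of the bundle. Bands are cyclic words with a Jordan block in $\mathscr{J}$ of nonzero eigenvalue. Translating back, a word is a sequence of nonzero paths and inverse paths. Consecutive letters of the same orientation must compose to zero, because the differential squares to zero. Consecutive letters of opposite orientation must not cancel, because of minimality and the reduction rules. These are exactly the conditions defining a homotopy string $\sfh=(\wp_i)_{i=1}^r$.

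For bands, two extra conditions appear. The equality between the number of direct and inverse letters is exactly what is needed for a consistent $\ZZ$-grading around the cycle. The non-power condition is the usual primitivity. The $\NN$-factor records the shift of the complex, i.e.\ the degree assigned to the starting letter. The equivalences $\sfh\sim\sfh^{-1}$ and cyclic rotation $\sfh[t]$ are the symmetries of the clan words. Together this gives the stated bijection $\X$. I expect the main obstacle to be verifying precisely that the reduction to the clan is faithful: that the radical base changes generated by the gentle relations are exactly the clan's admissible transformations. This is where \ref{G2}--\ref{G4} must be used carefully. Boundedness of cohomology, which controls the infinite left tails of strings on forbidden cycles, also needs care. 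I would handle it by first working with bounded minimal complexes and then passing to the limit along truncations.
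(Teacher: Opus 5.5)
The paper gives no proof of this statement; it quotes it from Bekkert--Merklen. Your outline is essentially their original route: minimal complexes in $K^{-,b}(\proj A)$, reduction to representations of a bundle of semichains in Bondarenko's sense, and translation of the resulting strings and bands. For bounded minimal complexes, i.e.\ for $K^{b}(\proj A)$, the sketch is reasonable modulo the faithfulness check you flag yourself. It also requires ``must not cancel'' to be read as the reduced-walk condition: adjacent letters of opposite orientation must end (resp.\ start) with different arrows. The weaker condition $\wp_{i+1}^{-1}\neq\wp_i$ written in the paper would allow, e.g., $\beta\alpha$ followed by $\alpha^{-1}$, whose complex splits after a base change.

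The genuine gap is the passage to unbounded complexes, and it is not a technicality. Bondarenko's classification concerns finite-dimensional representations, so it only reaches bounded minimal complexes. ``Passing to the limit along truncations'' is not yet an argument: the decompositions of the brutal truncations $\sigma_{\geqslant N}\comp{X}$ are not canonical and need not be compatible as $N\to-\infty$. Bounded cohomology also does not remove the tails; it only forces them to be exact. In Example \ref{examp:gentle} the simple module $S(1)$ has the periodic minimal projective resolution
\[ \cdots \to P(1) \xrightarrow{a_{31}\cdot} P(3) \xrightarrow{a_{23}\cdot} P(2) \xrightarrow{a_{12}\cdot} P(1) \to S(1) \to 0 . \]
So $S(1)$ is an indecomposable object of $\Dcat^b(A)$ that is not isomorphic to any bounded complex of projectives. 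By contrast, every $\X(([\sfh],n))$ and $\X(([\sfh],\pmb{J},n))$ with $\sfh=(\wp_i)_{i=1}^r$ finite is such a complex. Hence, with homotopy strings as defined in the paper, the map is not surjective as soon as $(\Q,\I)$ has a forbidden cycle, i.e.\ as soon as $\gldim A=\infty$. Your final sentence ``together this gives the stated bijection'' hides exactly this. In addition, shifts must run over $\ZZ$, not $\NN$, and the domain is a disjoint union, not an intersection.

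To close the gap, you have two options. One is to restrict to $K^b(\proj A)$, which equals $\Dcat^b(A)$ exactly when $\gldim A<\infty$. The other is to enlarge $\hStr(A)$ by infinite homotopy strings and replace the limit by the following observation.

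If $\comp{X}$ is minimal with $\coH^i(\comp{X})=0$ for all $i\leqslant N$, then the part of $\comp{X}$ in degrees $<N$ is a minimal projective resolution of $\kernel d^N$. Minimal resolutions are unique and additive. Hence $\comp{X}\mapsto\sigma_{\geqslant N}\comp{X}$ is a bijection between such indecomposables and indecomposable bounded minimal complexes $\comp{Y}$ in degrees $\geqslant N$ with $\kernel d^N_{\comp{Y}}\subseteq\rad Y^N$. So the bounded classification applies to the truncation, and $\comp{X}$ is recovered by appending a minimal resolution.

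High syzygies over a gentle algebra are direct sums of projectives and of modules $c_jA$ with $c_j$ on a forbidden cycle, and $\Omega(c_jA)\cong c_{j+1}A$. Therefore the appended parts are exactly one- or two-sided infinite ends winding around forbidden cycles, as for $S(1)$ above. These infinite homotopy strings must be added to the domain of $\X$.
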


\begin{notation} \rm
For any homotopy string or band with a Jordan block $\pmb{J}$, denoted as $\sfh$,
we define $\X((\sfh,n)):=\X(([\sfh],n))$ and $\X((\sfh,\pmb{J},n)):=\X(([\sfh],\pmb{J},n))$.
\end{notation}

This theorem provides two important facts as follows:

\begin{itemize}
  \item $\ind(\modcat(A)) = \image(\X|_{\hStr(A)\times\NN}) \cup \image(\X|_{\hBand(A)\times\NN\times\mathscr{J}})$;
  \item $\image(\X|_{\hStr(A)\times\NN}) \cap \image(\X|_{\hBand(A)\times\NN\times\mathscr{J}}) = \varnothing$.
\end{itemize}

In \cite{BM2003}, each indecomposable complex lying in $\image(\X|_{\hStr(A)\times\NN})$ is called a \defines{string complex},
and each indecomposable module lying in $\image(\X|_{\hBand(A)\times\NN\times\mathscr{J}})$ is called a \defines{band complex}.

\subsection{Cohomological widths}

Let $\comp{X}=\xymatrix{\cdots \ar[r] & X^{-1} \ar[r]^{d^{-1}} & X^0 \ar[r]^{d^0} & X^1 \ar[r]^{d^1} & \cdots}$
be an indecomposable object in the derived category $\Dcat^b(A)$ of an algebra.
Its \defines{cohomological width} is defined as
\[ \hw\comp{X} := \sup \{ j-i+1 \mid \coH^i(\comp{X})\ne 0, \coH^j(\comp{X})\ne 0, i \=< j \}, \]
and, furthermore, the \defines{global cohomological width} of $A$ is defined as
\[ \glhw(A) : = \sup\{\hw\comp{X} \mid \comp{X}\in\ind(\Dcat^b(A))\}. \]

Global cohomological width provides a description of an algebra to be quasi-tilted,
this result is based on \cite[Proposition 3.3]{H2008}, and it is first showed in \cite{Z2017globalcohomo}.

\begin{lemma}[{\cite[Proposition 3.3]{H2008}, \cite[Proposition 4.5]{Z2017globalcohomo}}] \label{lemm:HZ}
Let $A$ be an algebra. Then $\glhw A = 2$ if and only if
$A$ is a quasi-tilted algebra which is not hereditary.
\end{lemma}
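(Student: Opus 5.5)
This is a cited result (Happel, Zhang), so I would reconstruct the argument rather than derive it from the gentle-algebra machinery of the paper. Throughout I assume $A$ is connected.

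The key tool is Happel--Reiten--Smal\o: $A$ is quasi-tilted if and only if $\Dcat^b(A)$ carries a heart $\mathcal H$ of a t-structure which is a hereditary abelian category with tilting object. Equivalently, $A\cong\End(T)$ for a tilting object $T$ in a hereditary category $\mathcal H$, with $\Dcat^b(A)\simeq\Dcat^b(\mathcal H)$. Under this equivalence $A$ corresponds to $T$, and $\modcat(A)$ is recovered from the torsion pair $(\mathcal T,\mathcal F)$ in $\mathcal H$ induced by $T$. The module category is $\mathcal F[1]\vee\mathcal T$, so the standard heart is a tilt of $\mathcal H$.

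For the direction \emph{quasi-tilted, not hereditary $\Rightarrow \glhw A=2$}, the steps are as follows.

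\begin{enumerate}
\item Every indecomposable of $\Dcat^b(\mathcal H)$ has the form $H[n]$ with $H\in\mathcal H$ indecomposable, since $\mathcal H$ is hereditary.
\item Write $0\to tH\to H\to H/tH\to 0$ with $tH\in\mathcal T$ and $H/tH\in\mathcal F$. Then $H$, viewed in $\Dcat^b(A)$, has $A$-cohomology only in degrees $0$ and $1$, namely $\Hom(T,H)$ and $\mathrm{Ext}^1(T,H)$. Hence $\hw\le 2$ for every indecomposable, so $\glhw A\le 2$.
\item For the lower bound: since $\gldim A=2$ (as $A$ is not hereditary), there is a module $M$ with $\mathrm{Ext}^2_A(M,N)\neq0$. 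I would realise the nonzero class as a map $M\to N[2]$ and take its cone shifted, giving an object $\comp{X}$ with cohomology in two adjacent degrees.
\item To make $\comp{X}$ indecomposable with both cohomologies nonzero, I would choose $M,N$ indecomposable; then a non-split cone cannot decompose into shifted modules. Alternatively I would use the tilting description: $A$ not hereditary forces some indecomposable $H$ in $\mathcal H$ to have both $tH\neq0$ and $H/tH\neq0$. This gives $\glhw A=2$.
\end{enumerate}

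For the converse, \emph{$\glhw A=2\Rightarrow$ quasi-tilted, not hereditary}, the steps are as follows.

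\begin{enumerate}
\item Since $A$ is hereditary iff every indecomposable complex is a stalk iff $\glhw A=1$, the condition $\glhw A=2$ rules out hereditary.
\item Show $\gldim A\le2$. If $\mathrm{Ext}^3(M,N)\neq0$, the cone of the corresponding map $M\to N[3]$ should produce an indecomposable complex of width $\ge3$. Making this indecomposability argument rigorous is the delicate point, and Happel's argument works with minimal projective complexes for it.
\item For $M$ indecomposable with $\pdim M=\idim M=2$, I would pick nonzero classes $\mathrm{Ext}^2(M,N)$ and $\mathrm{Ext}^2(L,M)$. Their Yoneda composite in $\mathrm{Ext}^4$ vanishes, and I would splice them into an indecomposable object with cohomology spread over $3$ degrees, contradicting $\glhw A=2$.
\end{enumerate}

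Producing these indecomposable wide complexes is the main obstacle. For that step I would follow Happel's Proposition 3.3 directly rather than reprove it, since it is the crux of \cite{H2008}.
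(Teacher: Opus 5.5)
The paper gives no argument of its own for this lemma; it simply imports it from Happel and Zhang, whose proofs, as you note, work with minimal complexes of projectives. Your reconstruction is correct and takes a genuinely different, more self-contained route.

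\begin{itemize}
\item The upper bound $\glhw A\leq 2$ comes from the Happel--Reiten--Smal\o{} equivalence $\Dcat^b(A)\simeq\Dcat^b(\mathcal H)$. Under it, an indecomposable $H[n]$ has $A$-cohomology $\Hom(T,H[n+i])$ only for $n+i\in\{0,1\}$.
\item Every lower bound and every obstruction (being non-hereditary, $\gldim A\leq 2$, and the absence of indecomposables with $\pdim=\idim=2$) is detected by explicit cones of nonzero Ext-classes in $\Dcat^b(A)$.
\end{itemize}

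This keeps the converse entirely inside $\Dcat^b(A)$. It also shows directly that a nonzero class in $\mathrm{Ext}^k_A(M,N)$ with $M,N$ indecomposable and $k\geq 2$ yields an indecomposable object of width $k$. The cited route instead relies on the combinatorics of minimal projective complexes.

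You also do not need to defer to Happel for the step you flag as the main obstacle; it closes in a few lines. Cohomology is additive. So if $Z=Z_1\oplus Z_2$ and each nonzero cohomology group of $Z$ is indecomposable, each group lies in a single summand, and a summand with cohomology in one degree is a shifted module.

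\begin{itemize}
\item \textbf{Two-term cones.} Take $0\neq f\colon M\to N[k]$ with $M,N$ indecomposable and $k\geq 2$, and let $Z$ be its cone. A decomposition of $Z$ gives a summand $Z'\cong N[k]$ such that $N[k]\to Z\to Z'$ is an isomorphism. Hence $N[k]\to Z$ is a split monomorphism and $f=0$. This treats $k=2$ and $k=3$ identically, so your converse step 2 is no more delicate than your forward step 4.
\item \textbf{Three-degree object.} Let $N[1]\to Y\to M\to N[2]$ be the triangle whose last map is $\xi$. Since $\xi[2]\circ\eta\in\mathrm{Ext}^4_A(L,N)=0$, the class $\eta$ lifts to $\tilde\eta\colon L\to Y[2]$. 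The triangle $Y[1]\to Z\to L\to Y[2]$ on $\tilde\eta$ gives $\coH^{-2}(Z)=N$, $\coH^{-1}(Z)=M$ and $\coH^{0}(Z)=L$.
\item \textbf{Summand $\cong L$.} Then $Z\to L$ is a split epimorphism, so $\tilde\eta=0$ and hence $\eta=0$.
\item \textbf{Summand $\cong N[2]$.} Then $N[2]\to Y[1]$ is a split monomorphism, so $\xi=0$.
\item \textbf{Summand $\cong M[1]$.} The complement has cohomology only in degrees $-2$ and $0$. Its truncation triangle is classified by $\mathrm{Ext}^3_A(L,N)=0$, which vanishes by your step 2. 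So the complement splits as $N[2]\oplus L$, and one of the previous two cases applies.
\end{itemize}

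Thus $Z$ is indecomposable of width $3$, which is exactly what your converse step 3 needs.
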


\begin{theorem} \label{thm:main2}
Let $A=\kk\Q/\I$ be a gentle algebra. The following statements are equivalent.
\begin{itemize}
  \item[\rm(1)]
    If $\sfh$ is a homotopy string {\rm(}or a homotopy band{\rm)}
    whose arrows are not on any forbidden cycle, then
	$\hw\X(([\sfh],n)) \=< 2$ {\rm(}or $\hw\X(([\sfh],\pmb{J},n))\=<2${\rm)} holds for any $n\in\NN$;
  \item[\rm(2)]
    For any non-forbidden module $M \in \ind(\modcat(A))$ with $\pdim M \=< +\infty$ and $\idim M \=< +\infty$, we have
    $\findim A \=< 2$ and $\pdim M + \idim M \=< 3$.
\end{itemize}
\end{theorem}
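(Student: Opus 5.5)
The plan is to route both directions through the quotient algebra $\tC=C/C(1-\e_{\nota})C$ and use Theorem~\ref{thm:main} together with Lemma~\ref{lemm:HZ}. By Theorem~\ref{thm:main}, statement (2) is equivalent to $\tC$ being quasi-tilted. So the whole theorem reduces to one claim: the homotopy strings and bands of $A$ none of whose arrows lie on a forbidden cycle are exactly the homotopy strings and bands of the gentle algebra $\tC$, and the corresponding complexes have the same cohomology.

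\textbf{Step 1: compare the bound quivers.} Recall that $(\tilde{\Q},\tilde{\I})$ is obtained from $(\Q^{\CMA},\I^{\CMA})$ by deleting the vertices $\mathfrak{v}^{-1}(c_{ij}A)$. Hence it equals $(\Q,\I)$ with all arrows on forbidden cycles removed, and the relations among the remaining arrows are unchanged. By Lemma~\ref{lemm:forbcyc}, a forbidden path containing one arrow off all forbidden cycles has all its arrows off forbidden cycles. Consequently:
\begin{itemize}
\item the nonzero paths of $\tC$ are exactly the nonzero paths of $A$ avoiding forbidden-cycle arrows;
\item the relation conditions $a_{il_i}a_{i+1\,1}\in\I$ agree on both sides.
\end{itemize}
So a sequence $\sfh$ avoiding forbidden-cycle arrows is a homotopy string (respectively band) for $A$ if and only if it is one for $\tC$. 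By Theorem~\ref{thm:BM2003}, $\X$ on both sides builds the complex of projectives from the same combinatorial data. Since $\tC$ is a quotient of $C$ by an idempotent ideal not involving these vertices, $e_v\tC$ for such $v$ has the same $\kk$-basis of paths as $e_vA$ modulo paths that use cycle arrows. I expect the cohomology of the two complexes to be computable by the same combinatorial recipe: the cohomology of a string complex is read off from maximal forbidden-path extensions at the ends and between consecutive maps. I would therefore aim to show $\hw$ agrees.

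\textbf{Step 2: conclude.} Assume (1). Then by Step 1 every indecomposable object of $\Dcat^b(\tC)$ has width at most $2$, so $\glhw\tC\=<2$. By Lemma~\ref{lemm:HZ}, $\tC$ is quasi-tilted (hereditary being also quasi-tilted), and Theorem~\ref{thm:main} gives (2). Conversely, assume (2). Theorem~\ref{thm:main} makes $\tC$ quasi-tilted, so Lemma~\ref{lemm:HZ} gives $\glhw\tC\=<2$, and transporting back via Step 1 yields (1).

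\textbf{Main obstacle.} The cohomology comparison in Step 1 is the hard part. Projective $A$-modules $e_vA$ may contain paths running into forbidden cycles, which disappear in $\tC$, so the complexes are not literally equal. What I would try first is to show, using gentleness (G2)/(G3), that any path leaving the allowed subquiver into a forbidden cycle is killed by the relation at its entry vertex, in the same way as in the proof of Lemma~\ref{lemm:forbcyc}. If that holds, the extra summands are independent of $\sfh$ and are split off or uniformly shifted, so they do not change which degrees carry nonzero cohomology.

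If this fails in general, my fallback is more indirect. I would use Lemma~\ref{lemma:dim=2}: width $\>=3$ complexes correspond to forbidden paths of length $\>=3$ (giving $\findim\>=3$) or to strings with two length-$2$ forbidden paths at opposite ends (giving $\pdim+\idim=4$). Both situations can be detected by forbidden paths avoiding cycle arrows, exactly as in Lemmas~\ref{lemm:main0302} and~\ref{lemm:main0303}.
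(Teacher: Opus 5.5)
\textbf{There is a genuine gap in Step~1.} Your reduction through Theorem~\ref{thm:main} and Lemma~\ref{lemm:HZ} is sound. The combinatorial half of Step~1 is also correct: the homotopy strings and bands of $A$ that avoid forbidden-cycle arrows are exactly those of $\tC$. The gap is the claim $\hw\X_A(\sfh)=\hw\X_{\tC}(\sfh)$. It carries the whole theorem in both directions, yet you only state it as an expectation, and the mechanism you offer for it is false.

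If $a$ is an arrow off a forbidden cycle $c_1\cdots c_m$ with $\t(a)=\s(c_j)$, then $c_{j-1}c_j\in\I$ together with (G2) forces $ac_j\notin\I$. So paths starting at allowed vertices run into forbidden cycles, and through them whenever $\t(c_j)$ has a second outgoing arrow. In Example~\ref{examp:quasi-tilted}, $e_4A$ has basis $e_4,a_{41},a_{41}a_{12}$ (this is $P(4)_A=\sm{4\\1\\2}$), while $e_4\tC$ has basis $e_4,a_{41}$.

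These extra paths are not split off or uniformly shifted. Write $\tC=A/J$ with $J$ the ideal generated by the forbidden-cycle arrows. Then $\X_{\tC}(\sfh)=\X_A(\sfh)\otimes_A\tC$, and the subcomplex $\X_A(\sfh)J$ need not be acyclic. For example, take $\sfh=a_{74}$. The cohomology at $P(4)$ is $\kernel(a_{74}\cdot)=a_{41}A\cong\sm{1\\2}$ over $A$, but $a_{41}\tC\cong S(1)$ over $\tC$. So the cohomology modules genuinely change, and you must prove that the set of degrees in which they are nonzero does not.

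Your fallback does not supply this. Over $\tC$, ``width $\geqslant 3$ iff there is a long forbidden path or a string with $\pdim=\idim=2$'' is just Lemma~\ref{lemm:HZ} plus Theorem~\ref{thm:main} again. Over $A$, it is precisely the missing transfer.

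\textbf{Comparison with the paper.} The paper's (2)$\Rightarrow$(1) is the same as yours, and it too passes from $\glhw\tC\leqslant 2$ to the $A$-complexes without comment. So the inequality $\hw\X_A(\sfh)\leqslant\hw\X_{\tC}(\sfh)$ is needed there as well. The paper's (1)$\Rightarrow$(2), however, avoids any transfer. It takes a forbidden path $a_1a_2a_3$ off the forbidden cycles, or the walk $\alpha\beta\sfs\gamma\delta$ supplied by Lemmas~\ref{lemma:dim=2} and~\ref{lemm:forbcyc}. From this it writes down an explicit complex in $\Dcat^b(A)$ and checks directly that its cohomology occupies three consecutive degrees.

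\textbf{How to close Step~1.} You can argue on path bases. In a string complex, distinct basis paths are sent by the differential to elements with disjoint supports. Hence $\coH^i\neq 0$ if and only if one of the following holds:
\begin{itemize}
\item[(i)] some summand in degree $i-1$ has two outgoing components;
\item[(ii)] some summand in degree $i$ has no outgoing component;
\item[(iii)] some summand in degree $i$ has a single outgoing component, given by left multiplication by a path ending in an arrow $a$; there is an arrow $b$ with $ab\in\I$; and the incoming component, if any, is not left multiplication by $b$ itself.
\end{itemize}
Conditions (i) and (ii) depend only on the shape of $\sfh$. In (iii), Lemma~\ref{lemm:forbcyc} puts $b$ off every forbidden cycle, so $b\tC\neq0$ and (iii) reads identically over $A$ and over $\tC$. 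Band complexes are handled the same way, since the Jordan block is invertible.
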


\begin{proof}
$(1) \Rightarrow (2)$:
First, if $\findim A> 2$, then there exists a forbidden path $\sfh=a_1a_2a_3 = $
$1 \To{a_1} 2\To {a_2} 3 \To{a_3} 4$
on $(\Q,\I)$ such that all arrows of it are not on any forbidden cycle.
Then $\sfh$, as a homotopy string on $(\Q,\I)$, corresponds to an indecomposable complex
\[
\X(\sfh) :=
  \cdots \To{} 0
  \To{} P(3)
  \To{f_2}
P(2)
	\To{f_1}
P(1)
  \To{} 0
  \To{} \cdots
\]
in $\Dcat^b(A)$ by using Theorem \ref{thm:BM2003}.
In this case, we have the following two facts:
\begin{enumerate}[label=\textrm{(\alph*)}]
  \item \label{thm:main2(1)}
    $f_2: P(3)=e_3A \to P(2)=e_2A$, $e_3x\mapsto a_2e_3x=e_2a_2x$ ($\forall x\in A$) is not monomorphic
    by the fact that $a_3 \in \kernel(f_2)$ admits $\kernel(f_2) \ne 0$.
  \item \label{thm:main2(2)}
    $f_1: P(2)=e_2A \to P(1)=e_1A$, $e_2x\mapsto a_1e_2x=e_1a_1x$ ($\forall x\in A$) is not epimorphic.
    Otherwise, $f_1$ is a projective precover of $P(1)$, which follows that $P(1)$ is a direct summand of $P(2)$,
    i.e., $P(2)$ is decomposable, a contradiction.
\end{enumerate}
Therefore, $\hw\X(([\sfh],n)) = 3$ holds for all $n\in\NN$. We obtain a contradiction, i.e., $\findim A \=< 2$.

Second, for any non-forbidden module $M\in\ind(\modcat(A))$ with $\pdim M <+\infty$ and $\idim M <+\infty$,
we assume $\pdim M +\idim M >3$. Then $\pdim M +\idim M =4$, otherwise, one of $\pdim M\>=3$ and $\idim \>= 3$ holds,
which contradicts with $\findim A \=< 2$. Furthermore, we have $\pdim M = \idim M = 2$ by using $\pdim M\>=3$.
In this case, there is a string $\sfs$ corresponding to $M$ such that
Lemma \ref{lemma:dim=2} \ref{lemma:dim=2 (A)}, \ref{lemma:dim=2 (B)}, and \ref{lemma:dim=2 (C)} hold.
Then there is a reduced walk which is of the form $\alpha\beta\sfs\gamma\delta$
($\alpha,~\beta,~\gamma,~\delta\in\Q_1$) such that
\begin{itemize}
  \item $\beta\sfs \gamma$ is a string;
  \item $\alpha\beta$ and $\gamma\delta$ respectively are left and right maximal forbidden paths.
\end{itemize}
Since all arrows of $\sfs$ are not on any forbidden cycle of $(\Q,\I)$,
then all arrows of $\alpha\beta\sfs\gamma\delta$  are not on any forbidden cycle of $(\Q,\I)$
(by Lemma \ref{lemm:forbcyc}). There are four cases as follows.
\begin{figure}[H]
  \centering
  \includegraphics[width=0.98\textwidth]{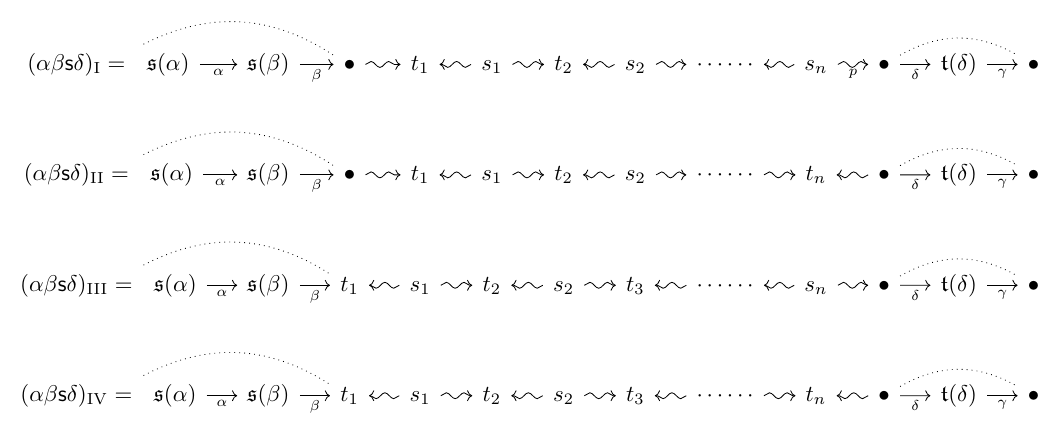}
\end{figure}
We only consider the case of $\alpha\beta\sfs\delta=(\alpha\beta\sfs\delta)_{\rmI}$,
the other cases are similar. Notice that we have
\begin{align*}
  \X(([\alpha\beta\sfs\gamma\delta],0))
& \cong \cdots \To{} 0 \To{} P(t_1)\oplus P(t_n) \oplus P(\t(\delta))
  \To{f} P(\s(\beta))\oplus P(s_1) \oplus \cdots \oplus P(s_n) \\
&  \To{(\varphi,0,\cdots,0)} P(\s(\alpha)) \To{} 0 \To{} \cdots,
\end{align*}
where $f$ is of the form
\begin{center}
  $\left(\begin{matrix}
  * & 0 & \cdots & 0 \\
  * & * & \dots & 0 \\
  \vdots & \vdots && \vdots \\
  * & * & \dots & \phi
\end{matrix}\right)_{(n+1)\times (n+1)}$,
\end{center}
and $\varphi$ and $\phi$ are homomorphisms between two indecomposable projective modules induced by
$\alpha$ and $p$, respectively.
In this case, we can show the following two facts in a way similar to the proof of \ref{thm:main2(1)} and \ref{thm:main2(2)}:
\begin{enumerate}[label=\textrm{(\alph*$'$)}]
  \item $\varphi$ is not epimorphic.
  \item $\phi$ is not monomorphic (then $f$ is not monomorphic).
\end{enumerate}
Then we obtain a indecomposable complex in $\Dcat^b(A)$ whose cohomological width is $3$, a contradiction.

$(2)\Rightarrow (1)$:
By Theorem \ref{thm:main}, (2) admits $\tC := A^{\CMA}/A^{\CMA}(1-\e_{\nota})A^{\CMA}$ is quasi-tilted.
Then we have $\glhw(\tC)\=< 2$ by Lemma \ref{lemm:HZ}.
It follows that $\hw\comp{X}\=<2$ holds for any indecomposable complex in $\Dcat^b(A)$,
and then, for any homotopy string $\sfh$, $\hw \X(([\sfh], n)) \=<2$ holds for all $n\in\NN$.
\end{proof}


\subsection{Krull--Gabriel dimensions}

Let $\Dcat$ be an essentially small triangulated category.
Recall that the \defines{Abelianization} of $\Dcat$,
denoted by $\Ab(\Dcat)$, is the category of all additive contravariant functors $F : \Dcat^{\op} \to \Ab$
from $\Dcat^{\op}$ to the Abelian group category $\Ab$ that are finitely presented,
i.e., for each such $F$, there exists an exact sequence
\[ \Hom_{\Dcat}(-, X) \to \Hom_{\Dcat}(-, Y) \to F \to 0 \]
for some objects $X, Y \in \Dcat$, see \cite[Section 1]{Geigle1985}.

The \defines{Krull--Gabriel filtration} of the Abelianization $\Ab(\Dcat)$ of $\Dcat$ is a sequence of Serre subcategories $\mathcal{A}_n$ of $\Ab(\Dcat)$ indexed by $n \in \mathbb{Z}_{\>= -1}$. Here,
\begin{enumerate}[label=\textrm{(\arabic*)}]
  \item a Serre subcategory of $\Ab(\Dcat)$ is a subcategory such that
    it is closed under subobjects, quotients, and extensions;
  \item $\mathcal{A}_{-1} := \{0\}$;
  \item and, for any $n \>= 0$, $\mathcal{A}_n$ is the full subcategory generated by all objects $X$ in $\Ab(\Dcat)$
    which are of finite length in the image of the functor $q_n : \Ab(\Dcat) \to \Ab(\Dcat)/\mathcal{A}_{n-1}$,
    where $\Ab(\Dcat)/\mathcal{A}_{n-1}$ is a quotient category of $\Ab(\Dcat)$.
\end{enumerate}

\begin{definition} \rm
The \defines{Krull--Gabriel dimension} $\KGdim \Dcat$
of an essentially small triangulated category $\Dcat$ is the smallest integer $t$ such that $\mathcal{A}_t$,
a Serre subcategories in the Krull--Gabriel filtration $\{\mathcal{A}_n\}_{n=1}^{\infty}$ of $\Dcat$, coincides with $\Ab(\Dcat)$.
\end{definition}

A finite-dimensional algebra $A$ is said to be \defines{derived discrete} if for any integer vector
$(d_i)_{i\in\ZZ}$ with finitely many nonzero entries,
the number of indecomposable objects $\comp{X}$ in $\Dcat^b(A)$
satisfying $\dim\coH^i(\comp{X}) = d_i$ for all $i\in\ZZ$ is finite (up to isomorphism and shift), see \cite[Section 1.1]{V2001}.
Otherwise, Han and Zhang proved that $A$ must be \defines{strongly derived unbounded}, i.e.,
there exists a family of integer vectors $\{\pmb{d}_j := (d_{ji})_{i\in\ZZ}\}_{j=1}^{\infty}$
satisfying following two conditions:
\begin{itemize}
  \item any $\pmb{d}_j$ has only a finite number of non-zero components
    (in particular, we define $D_j=\max\limits_{i\in\ZZ} d_{ji}$);
  \item $D_j \max\limits_{i,i'\in\ZZ\atop d_{ji}\ne 0\ne d_{ji'}} (i-i'+1)
    < D_l\max\limits_{i,i'\in\ZZ\atop d_{li}\ne 0\ne d_{li'}} (i-i'+1)$
    holds for all $j<l\in\NN^+$,
\end{itemize}
such that the number of indecomposable objects $\comp{X}_j$ in $\Dcat^b(A)$
satisfying $\dim\coH^i(\comp{X}_j) = d_{ji}$ for all $i\in\ZZ$ is infinite (up to isomorphism and shift),
see \cite[Theorem 2]{ZH2016}.
Vossieck show that a derived discrete finite-dimensional algebra is either a piecewise hereditary algebra
or a gentle one-cycle algebra which is not piecewise hereditary in \cite[Theorem in page 171]{V2001}.
Thus, we immediately obtain the following theorem, which provides a classification of
finite-dimensional algebras up to derived representation types.

\begin{theorem}[{\cite[Theorem in page 171]{V2001},\cite[Theorem 2]{ZH2016}}] \label{thm:VHZ}
A finite-dimensional algebra must satisfy one of the following properties.
\begin{enumerate}[label=\text{\rm(\arabic*)}]
  \item It is both a derived discrete algebra and a piecewise hereditary algebra
    \footnote{We say a finite-dimensional algebra is \defines{piecewise hereditary} if it is derived equivalent to a hereditary algebra, which are introduced by Happel, Rickard, and Schofield in \cite{HRS1988}.}.
  \item It is a derived discrete gentle one-cycle algebra but not piecewise hereditary.
  \item It is a strong derived unbounded algebra.
\end{enumerate}
\end{theorem}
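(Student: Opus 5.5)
The trichotomy is obtained by combining Vossieck's classification \cite{V2001} with the Han--Zhang dichotomy \cite{ZH2016}. The proof is essentially a case split on whether $A$ is derived discrete.

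First, fix a finite-dimensional algebra $A$. By \cite[Theorem 2]{ZH2016}, exactly one of the following holds: $A$ is derived discrete, or $A$ is strongly derived unbounded. In the second case we are in alternative (3), and nothing more needs to be shown.

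Second, assume $A$ is derived discrete. I apply Vossieck's theorem \cite[Theorem in page 171]{V2001}. In the form quoted above, it says that a derived discrete algebra is either piecewise hereditary or a gentle one-cycle algebra that is not piecewise hereditary. This splits into two subcases.

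\begin{itemize}
  \item If $A$ is piecewise hereditary, it is derived discrete and piecewise hereditary, which is alternative (1).
  \item Otherwise $A$ is a derived discrete gentle one-cycle algebra that is not piecewise hereditary, which is alternative (2).
\end{itemize}

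Since ``piecewise hereditary'' and its negation are complementary, no derived discrete algebra escapes (1) and (2).

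The only point needing care is how the two cited theorems are applied. Vossieck's theorem is really stated up to derived equivalence: a derived discrete algebra is either piecewise hereditary of Dynkin type, or derived equivalent to one of the algebras $\Lambda(r,n,m)$. These are gentle one-cycle algebras that fail the clock condition. I would check that the paper's phrasing, and hence alternative (2), is meant up to derived equivalence. Since the statement only asserts that one of the properties holds, reading (2) as ``derived equivalent to such an algebra'' suffices. Similarly, the Han--Zhang dichotomy must be invoked in its full form: an algebra that is not derived discrete is strongly derived unbounded. With those readings fixed, the trichotomy follows immediately.
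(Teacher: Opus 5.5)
Your case split is the paper's argument: the paper obtains the trichotomy immediately by combining the Han--Zhang dichotomy (every algebra is derived discrete or strongly derived unbounded) with Vossieck's description of derived discrete algebras. Your caveat is misplaced, though: Vossieck's theorem already gives, for a connected algebra, Morita equivalence (not merely derived equivalence) to a gentle one-cycle algebra violating the clock condition, and the derived normal forms $\Lambda(r,n,m)$ are the later refinement of Bobi\'nski--Gei\ss--Skowro\'nski, so you should not weaken (2) to ``derived equivalent to'' (which would not prove (2) as stated); the hypothesis genuinely left tacit, in your proof and in the paper's, is connectedness, since e.g.\ a product of two derived discrete, non-piecewise-hereditary gentle one-cycle algebras satisfies none of (1)--(3).
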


The following result was shown by Bobi{\'n}ski and Krause, which provides a classification of
finite-dimensional algebras by using Krull--Gabriel dimensions.

\begin{theorem}[{\cite[Main Theorem]{BK2015}}] \label{thm:BK}
Let $\mathit{\Lambda}$ be a finite-dimensional algebra.
\begin{enumerate}[label=\text{\rm(\arabic*)}]
  \item If $\mathit{\Lambda}$ is derived discrete and piecewise hereditary,
    then $\KGdim\Ab(\Dcat^b(A))=0$. \label{thm:BK(1)}
  \item If $\mathit{\Lambda}$ is derived discrete and not piecewise hereditary,
    then $\gldim A < \infty$ implies $\KGdim\Ab(\Dcat^b(A))=2$,
    and $\gldim A = \infty$ implies $\KGdim\Ab(\Dcat^b(A))=1$. \label{thm:BK(2)}
  \item If $\mathit{\Lambda}$ is not derived discrete, then $\KGdim\Ab(\Dcat^b(A))\>= 2$. \label{thm:BK(3)}
\end{enumerate}
\end{theorem}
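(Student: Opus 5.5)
Since this is the Main Theorem of \cite{BK2015}, which the paper only cites, I would not reprove it from scratch. Instead I would retrace the argument through the Ziegler spectrum. The key input is that for a locally finite (or at least Hom-finite, Krull--Schmidt) triangulated category $\Dcat=\Dcat^b(\mathit{\Lambda})$, $\KGdim\Ab(\Dcat)$ can be computed from the Auslander--Reiten theory of $\Dcat$. Concretely, $\mathcal{A}_0$ consists exactly of the finite-length functors, and the higher layers are detected by the Cantor--Bendixson rank of the space of indecomposable pure-injective objects in the compactly generated category $\Dcat(\mathrm{Mod}\,\mathit{\Lambda})$. I would split the proof into the three cases of Theorem \ref{thm:VHZ}, treating each case separately.

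For \ref{thm:BK(1)}, I would use that a derived discrete, piecewise hereditary algebra is derived equivalent to $\kk\Delta$ with $\Delta$ a Dynkin quiver. Then $\Dcat^b(\mathit{\Lambda})$ has finitely many indecomposables up to shift, and each $\Hom_{\Dcat}(-,X)$ is nonzero on only finitely many indecomposables. Hence every representable functor has finite length, so every finitely presented functor does too. This gives $\Ab(\Dcat)=\mathcal{A}_0$, i.e.\ Krull--Gabriel dimension $0$.

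For \ref{thm:BK(2)}, I would invoke the Bobiński--Geiss--Skowroński description of $\Dcat^b(\mathit{\Lambda})$ for a derived discrete gentle one-cycle algebra that is not piecewise hereditary. This category is determined by two parameters. Its Auslander--Reiten quiver consists of finitely many components $\mathcal{X}^i$, all of type $\ZZ A_\infty$. When $\gldim\mathit{\Lambda}<\infty$ there are also components $\mathcal{Z}^i$ of type $\ZZ A_\infty^\infty$, and these are absent when $\gldim\mathit{\Lambda}=\infty$. Since morphism spaces are at most $1$-dimensional and supported on explicit ``hammocks'', I would compute the simple functors and then the quotient $\Ab(\Dcat)/\mathcal{A}_0$. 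In that quotient, the surviving objects are controlled by rays and corays in these components. Each ray in a $\ZZ A_\infty$-component contributes a single new pure-injective (a homotopy colimit, i.e.\ a Prüfer-type object) of Cantor--Bendixson rank $1$. In the finite global dimension case, the $\ZZ A_\infty^\infty$-components produce limits of such objects, giving one further point of rank $2$. This yields dimension $1$ for infinite global dimension and $2$ for finite global dimension. I expect this step to be the main obstacle: it requires a complete list of the indecomposable pure-injectives together with their closure relations, and one must also rule out superdecomposable pure-injectives so that the Krull--Gabriel dimension equals the Cantor--Bendixson rank.

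For \ref{thm:BK(3)}, by Theorem \ref{thm:VHZ} the algebra is strongly derived unbounded. I would extract from this a one-parameter family and realise it as a full, exact-on-$\Ab$ embedding of $\modcat$ of the Kronecker algebra (or of a generalised-Kronecker-type subcategory) into $\Dcat^b(\mathit{\Lambda})$. Such an embedding induces a Serre-quotient or localisation relation between abelianizations that cannot decrease Krull--Gabriel dimension. Geigle's result that the Kronecker algebra has Krull--Gabriel dimension $2$ then gives $\KGdim\Ab(\Dcat^b(\mathit{\Lambda}))\>=2$.
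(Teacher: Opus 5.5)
\textbf{Assessment.} The paper gives no argument for this statement; it only quotes Bobi\'nski--Krause \cite{BK2015}. Your part (1) is fine. There are two genuine problems, in (2) and in (3).

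\textbf{Part (2): the wrong category when $\gldim\itLamb=\infty$.} This is exactly the case in which the statement asserts the value $1$. The Ziegler spectrum of $\Dcat(\mathrm{Mod}\,\itLamb)$ only sees $\Dcat(\mathrm{Mod}\,\itLamb)^c=K^b(\proj\itLamb)$. Likewise, the Auslander--Reiten picture you use for infinite global dimension (only $\ZZ A_\infty$-components, ``no $\mathcal{Z}^i$'') describes perfect complexes. But $\Dcat^b(\modcat\itLamb)$ also contains the non-perfect indecomposables, and this changes the answer.

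Take the derived discrete algebra $\itLamb=\kk[x]/(x^2)$, its simple module $S$, and a generator $y\in\mathrm{Ext}^1(S,S)$, so that $\mathrm{Ext}^*(S,S)=\kk[y]$. Let $P_n=(\itLamb\xrightarrow{x}\cdots\xrightarrow{x}\itLamb)$ with $n$ terms. Let $G_m\subseteq\Hom(-,S)$ consist of the maps factoring through $y^m\colon S[-m]\to S$.

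Since $\bigoplus_j\Hom(P_n,S[j])$ is, up to shift, the graded $\kk[y]$-module $\kk[y]/(y^n)$, the quotient $G_m/G_{m+1}$ is non-zero on $S[-m]$ and on one shift of $P_n$ for every $n>m$. Being non-zero on infinitely many indecomposables, it is not of finite length. Hence $\Hom(-,S)$ has infinite length in $\Ab/\mathcal{A}_0$, and $\KGdim\Ab(\Dcat^b(\modcat\itLamb))\geqslant 2$. In fact it equals $2$, as shown by Arnesen--Laking--Pauksztello--Prest via $K(\mathrm{Proj}\,\itLamb)^c\simeq\Dcat^b(\modcat\itLamb^{\op})^{\op}$.

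So the value $1$ holds only for $\Dcat^b(\proj\itLamb)$, the category of perfect complexes, which is what Bobi\'nski--Krause actually treat. You must state and prove (2) in that category. Even there, the step you call the main obstacle is the entire content of (2): the complete list of indecomposable pure-injectives with their closure relations, plus the isolation property needed to identify Cantor--Bendixson rank with Krull--Gabriel dimension. What you have is a plan, not a proof.

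\textbf{Part (3): the Kronecker embedding does not work as described.}
\begin{itemize}
\item Strong derived unboundedness in the sense of Han--Zhang is a counting statement about cohomology dimension vectors. It supplies neither a one-parameter family nor any functor.
\item When a one-parameter family does exist (for instance band complexes of a gentle algebra), it produces tubes. It does not give a fully faithful copy of the whole Kronecker module category, whose preprojective and preinjective parts need not appear.
\item Even for a fully faithful exact functor $\Phi$ from Kronecker modules into $\Dcat^b(\itLamb)$, precomposition with $\Phi$ need not take finitely presented functors to finitely presented ones; that requires approximations by objects in the image of $\Phi$.
\item Nor does such a $\Phi$ automatically realise the Kronecker abelianization as a Serre subquotient of $\Ab(\Dcat^b(\itLamb))$.
\end{itemize}
Without something like functorial finiteness of the image, or an interpretation functor in Prest's sense, Geigle's value $2$ cannot be transported to $\Dcat^b(\itLamb)$. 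As it stands, $\KGdim\geqslant 2$ for non-derived-discrete algebras is not established by your argument.
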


Now we provide the last main result of our paper.

\begin{corollary} \label{coro:main3}
Let $A=\kk\Q/\I$ be a gentle one-cycle algebra and $C$ be its CM--Auslander algebra.
\begin{enumerate}[label=\text{\rm(\arabic*)}]
  \item $\KGdim\Ab(\Dcat^b(A)) \=< 2$ if and only if $\KGdim\Ab(\Dcat^b(C))\=<2$.
  \item $\KGdim\Ab(\Dcat^b(A))=\KGdim\Ab(\Dcat^b(C))$ if and only if $A\cong C$.
  \item $\KGdim\Ab(\Dcat^b(C/C(1-\e_{\nota})C)) = 0$.
\end{enumerate}
\end{corollary}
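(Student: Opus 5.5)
The plan is to reduce all three statements to Theorem \ref{thm:BK} combined with Theorem \ref{thm:VHZ}, after first establishing three structural facts about $A$, $C$, and $\tC:=C/C(1-\e_{\nota})C$.

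\textbf{Structural facts.} Since $A$ is gentle one-cycle, its quiver $\Q$ has exactly one cycle. A forbidden cycle is an oriented cycle, so $(\Q,\I)$ has at most one forbidden cycle $\C_1=c_{11}\cdots c_{1n}$.

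\emph{Case $\C_1$ absent.} Then $\e_{\nota}=1$ and $C\cong A$ by Theorem \ref{thm:CL}.

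\emph{Case $\C_1$ present.} Here $\gldim A=\infty$. The Chen--Lu construction replaces each $c_{1j}$ by the path $c_{1j}^-c_{1j}^+$ through a new vertex $v_{1j}$, and the only new relations are $c_{1j}^+c_{1,\overline{j+1}}^-$. Consequently:
\begin{itemize}
  \item $\Q^{\CMA}$ is still one-cycle, and $C$ is gentle by Theorem \ref{thm:CL}.
  \item The unique cycle of $C$ has $2n$ arrows, only $n$ relations, and those relations alternate with relation-free compositions $c_{1j}^-c_{1j}^+$. So it is no longer a forbidden cycle, and $\gldim C<\infty$ (this also follows from the CM-Auslander property, as in the proof of Proposition \ref{prop:recoll CMA}).
  \item $\tC$ is obtained by deleting the vertices $v_{1j}$, as in the proof of Lemma \ref{lemm:main0303}. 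This deletion cuts the unique cycle, so the quiver of $\tC$ is a disjoint union of trees with gentle relations.
\end{itemize}

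\textbf{Statement (3).} A gentle algebra on a tree is a gentle tree algebra, hence piecewise hereditary of Dynkin type $\mathbb{A}$ (derived equivalent to $\kk\mathbb{A}_m$, by Assem--Happel). Each block is therefore derived discrete and piecewise hereditary. The Krull--Gabriel dimension of a finite product is the maximum over its blocks, so Theorem \ref{thm:BK}\ref{thm:BK(1)} gives dimension $0$. The same argument covers the case without a forbidden cycle, where $\tC=C/0$ is $A$ or a product of tree pieces. If $A$ itself is not a tree algebra, I would double-check that this degenerate case is handled by the convention that $\tC=0$ when $1-\e_{\nota}=0$.

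\textbf{Statements (1) and (2).} Use the trichotomy of Theorem \ref{thm:VHZ}: the KG-dimension is $\leqslant 2$ exactly when the algebra is derived discrete. For gentle one-cycle algebras, derived discreteness is Vossieck's clock condition: the numbers of clockwise and anticlockwise relations on the cycle differ.

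\emph{Key step for (1).} Passing from $A$ to $C$ doubles the arrows on the cycle but keeps exactly one relation per original relation, with the same orientation. Hence $A$ satisfies the clock condition if and only if $C$ does, which gives (1).

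\emph{Statement (2).} This follows from the dimension values in Theorem \ref{thm:BK}\ref{thm:BK(2)}:
\begin{itemize}
  \item \emph{$A\cong C$.} The dimensions agree trivially.
  \item \emph{$A\not\cong C$, so a forbidden cycle exists.} In the derived discrete case, $\gldim A=\infty$ forces KG-dimension $1$, while $\gldim C<\infty$ with $C$ not piecewise hereditary forces $2$. The algebra $C$ is not piecewise hereditary because its cycle still fails the Euler-form/orientation condition for $\tilde{\mathbb{A}}$-type equivalence: the number of relations $n$ is nonzero and unbalanced.
  \item \emph{$A\not\cong C$, not derived discrete.} Both dimensions are $\geqslant 2$, and I expect one must use that non-derived-discrete one-cycle gentle algebras are derived tame with KG-dimension exactly $\infty$ or a fixed value. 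This is the main obstacle: Theorem \ref{thm:BK}\ref{thm:BK(3)} only gives a lower bound, so equality need not fail.
\end{itemize}
To handle that last case, I would first try to show that a one-cycle gentle algebra satisfying the clock condition is derived discrete. Then "not derived discrete" forces balanced clock counts, which is impossible when a full-relational cycle exists, since it makes all $n$ relations on the cycle point the same way. That would eliminate the problematic case entirely.
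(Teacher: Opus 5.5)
Your overall strategy matches the paper's: reduce to Theorem~\ref{thm:BK} via Theorem~\ref{thm:VHZ}, note that $A\cong C$ unless there is a forbidden cycle, and compare the values $1$ and $2$. Counting relations on the cycle of $C$ is a good self-contained replacement for the paper's citation that $A$ is derived discrete if and only if $C$ is. However, the sentence ``the KG-dimension is $\leqslant 2$ exactly when the algebra is derived discrete'' is not available. Theorem~\ref{thm:BK}\,(3) is only a lower bound, so nothing prevents a non-derived-discrete algebra (say of Euclidean type $\tilde{\mathbb{A}}$, like the Kronecker algebra) from having dimension exactly $2$; you note this yourself when discussing (2). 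Your key step for (1) uses this equivalence in both directions, so as written it does not prove (1). The fix is your last paragraph, moved to the front:
\begin{itemize}
  \item if $A\cong C$, there is nothing to prove;
  \item if $A\not\cong C$, then $(\Q,\I)$ contains a forbidden cycle of length $n\geqslant 1$ whose $n$ relations all have the same orientation, so $A$ is derived discrete;
  \item so is $C$, which carries the same $n$ relations on an oriented cycle of length $2n$;
  \item Theorem~\ref{thm:BK}\,(2) then gives the values $1$ and $2$.
\end{itemize}
This proves (1) and (2) at once, using only ``derived discrete $\Rightarrow$ dimension $\leqslant 2$''. It is also how the paper avoids the issue: in the strongly derived unbounded case it shows there is no forbidden cycle, so $A=C$. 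Two minor points. First, you use ``clock condition'' for the unbalanced case, which is the opposite of standard usage. Second, applying Theorem~\ref{thm:BK}\,(2) to $C$ needs that $C$ is not derived equivalent to a \emph{Dynkin} quiver (part of Vossieck's classification). It is not enough that $C$ is not of type $\tilde{\mathbb{A}}$, which already follows from derived discreteness.

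The genuine gap is the degenerate case of (3). If $(\Q,\I)$ has no forbidden cycle, then $1-\epsilon_\star=0$, so $C(1-\epsilon_\star)C=0$ and $\widetilde{C}=C=A$. No convention makes $\widetilde{C}=0$, and $A$ is one-cycle, not a product of trees. In this case (3) is actually false:
\begin{itemize}
  \item for the Kronecker algebra, $\widetilde{C}=A$ is not derived discrete, so Theorem~\ref{thm:BK}\,(3) gives dimension $\geqslant 2$;
  \item for $1\xrightarrow{a}2\xrightarrow{b}1$ with the single relation $ab$, $\widetilde{C}=A$ is derived discrete, not piecewise hereditary, and of global dimension $2$, so its dimension is $2$.
\end{itemize}
So your argument, like the paper's (which just asserts that the quiver of $\widetilde{C}$ is a gentle tree), proves (3) only under the extra hypothesis that $(\Q,\I)$ contains a forbidden cycle, equivalently $A\not\cong C$. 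You should state this hypothesis rather than invoke a convention. Under it, your argument is correct and even supplies details the paper omits. Deleting the vertices $v_{1j}$ removes every arrow of the cycle and leaves a product of gentle tree algebras, each piecewise hereditary of type $\mathbb{A}$. The Krull--Gabriel dimension of a product is the maximum over its factors, so the value $0$ follows.
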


\begin{proof}
(1) Assume $\KGdim\Ab(\Dcat^b(A)) \=< 2$. We have the following cases:
\begin{enumerate}[label=\text{\rm(\alph*)}]
  \item $A$ is strongly derived unbounded. \label{coro:main3-pf-(a)}
  \item $A$ is derived discrete. \label{coro:main3-pf-(b)}
\end{enumerate}

In the case \ref{coro:main3-pf-(a)}, $(\Q,\I)$ has a homotopy band, it follows that $(\Q,\I)$ has no oriented cycle
since $(\Q,\I)$ has only one cycle.
Then, naturally, $(\Q,\I)$ has no forbidden cycle. In this case, $\gldim A <\infty$, it implies $A=C$.
We obtain $\KGdim\Ab(\Dcat^b(A))=\KGdim\Ab(\Dcat^b(C))$ as required.

In the case \ref{coro:main3-pf-(b)}, $A$ is derived discrete.
Then, by Theorem \ref{thm:VHZ}, $A$ is either a piecewise hereditary algebra or a gentle one-cycle algebra which is not piecewise hereditary.
If it is piecewise hereditary, then $\KGdim(\Ab(\Dcat^b(A)))=0$ by Theorem \ref{thm:BK} \ref{thm:BK(1)}.
In this case, $\gldim A < \infty$, and so, $A=C$. We obtain $\KGdim(\Ab(\Dcat^b(A)))=\KGdim(\Ab(\Dcat^b(C)))=0$ as required.
Otherwise, $A$ is a gentle one-cycle algebra which is not piecewise hereditary.
We have two subcases as follows.
\begin{enumerate}[label=\text{\rm(b.\arabic*)}]
  \item If $\gldim A < \infty$, then $A=C$, i.e., $\KGdim(\Ab(\Dcat^b(A)))=\KGdim(\Ab(\Dcat^b(C)))=0$.
  \item If $\gldim A = \infty$, since $A$ is derived discrete and not piecewise hereditary in this case,
    we obtain $\KGdim(\Ab(\Dcat^b(A)))=1$ by Theorem \ref{thm:BK} \ref{thm:BK(2)}.
    On the other hand, by \cite[Corollary 6.8 (i)--(iv)]{B2011}, we have $\gldim C=\findim A<\infty$,
    and by \cite[Corollary 4.8]{LZhang2023CM-Auslander},
    we have known that $A$ is derived discrete if and only if so is its CM--Auslander algebra $C$.
    Thus, $\KGdim(\Ab(\Dcat^b(C)))=2$.
\end{enumerate}

Conversely, we assume $\KGdim\Ab(\Dcat^b(A)) \=< 2$. We have the following cases:
\begin{enumerate}[label=\text{\rm(\alph*$'$)}]
  \item $A\cong C$. In this case we have $\gldim A < \infty$, this is a trivial case.
  \item $A\not\cong C$. In this case we have $\gldim A = \infty$,
    and then the bound quiver $(\Q^{\CMA},\I^{\CMA})$ of $C$ has only one cycle which is oriented.
    Clearly, $(\Q^{\CMA},\I^{\CMA})$ has no homotopy band. Then $C$ is derived discrete and not piecewise hereditary.
    By Theorem \ref{thm:BK} \ref{thm:BK(2)}, $\KGdim\Ab(\Dcat^b(C))=1<2$ as required.
\end{enumerate}

(2) This statement is a direct consequence of (1).

(3) Notice that the bound quiver of $C/C(1-\e_{\nota})C$ is a gentle tree,
then it is derived discrete and piecewise hereditary.
It follows that $\KGdim\Ab(\Dcat^b(C/C(1-\e_{\nota})C)) = 0$ by Theorem \ref{thm:BK} \ref{thm:BK(1)}.
\end{proof}




\def\cprime{$'$}



\end{document}